\tikzset{
  pto/.style={->,postaction={decorate},
    decoration={
        markings,
        mark=at position 0.5 with {\arrow{|}}}
  },
}
\tikzset{labelsize/.style={font=\scriptsize}}
\numberwithin{equation}{section}
\setlist[description]{font=\normalfont}
\declaretheorem[style=plain,numberwithin=section,name=Theorem]{theorem}
\declaretheorem[style=plain,sibling=theorem,name=Lemma]{lemma}
\declaretheorem[style=plain,sibling=theorem,name=Proposition]{proposition}
\declaretheorem[style=plain,sibling=theorem,name=Corollary]{corollary}
\declaretheorem[style=definition,qed=$\blacksquare$,sibling=theorem,name=Definition]{definition}
\declaretheorem[style=definition,qed=$\blacksquare$,sibling=theorem,name=Example]{example}
\declaretheorem[style=definition,qed=$\blacksquare$,sibling=theorem,name=Remark]{remark}
\crefname{theorem}{Theorem}{Theorems}
\crefname{appendix}{Appendix}{Appendices}
\crefname{section}{Section}{Sections}
\crefname{subsection}{Subsection}{Subsections}
\crefname{definition}{Definition}{Definitions}
\crefname{example}{Example}{Examples}
\crefname{remark}{Remark}{Remarks}
\crefname{equation}{}{}
\crefname{corollary}{Corollary}{Corollaries}
\crefname{proposition}{Proposition}{Propositions}
\crefname{lemma}{Lemma}{Lemmas}
\newcommand{\pto}{}% just for safety
\newcommand{\pgets}{}% just for safety
\DeclareRobustCommand{\pto}{\mathrel{\mathpalette\p@to@gets\to}}
\DeclareRobustCommand{\pgets}{\mathrel{\mathpalette\p@to@gets\gets}}
\newcommand{\p@to@gets}[2]{%
  \ooalign{\hidewidth$\m@th#1\mapstochar\mkern5mu$\hidewidth\cr$\m@th#1\to$\cr}%
}
\newcommand{\Hom}{\mathrm{Hom}}
\newcommand{\Pow}{\mathcal{P}}
\newcommand{\id}{\mathrm{id}}
\newcommand{\len}{\mathrm{len}}
\newcommand{\Conn}{\mathrm{Conn}}
\newcommand{\norm}[1]{\left\lVert#1\right\rVert_1}
\newcommand{\proj}{\mathrm{proj}}
\newcommand{\lift}{\mathrm{lift}}
\newcommand{\Aut}{\mathrm{Aut}}
\newcommand{\gr}[1]{\left\lVert #1 \right\rVert}
\newcommand{\ev}{\mathrm{ev}}
\title[Hom complexes and square-freeness]{Homotopy types of Hom complexes of graph homomorphisms whose codomains are square-free}
\author[S. Fujii]{Soichiro Fujii}
\address{
Department of Mathematics and Statistics, Faculty of Science, Masaryk University, Kotl\'a\v{r}sk\'a 2, 611 37 Brno, Czech Republic}
\email{s.fujii.math@gmail.com}
\author[K. Kimura]{Kei Kimura}
\address{
Faculty of Information Science and Electrical Engineering, Kyushu University \\ 744, Motooka, Nishi-ku, Fukuoka, 819-0395, Japan}
\email{kkimura@inf.kyushu-u.ac.jp}
\author[Y. Nozaki]{Yuta Nozaki}
\address{
Faculty of Environment and Information Sciences, Yokohama National University \\
79-7 Tokiwadai, Hodogaya-ku, Yokohama, 240-8501 \\
Japan\vspace{-0.6em}}
\address{
WPI-SKCM$^2$, Hiroshima University \\
1-3-1 Kagamiyama, Higashi-Hiroshima, Hiroshima, 739-8526 \\
Japan}
\email{nozaki-yuta-vn@ynu.ac.jp}
\subjclass[2020]{Primary 55U05, 05C15, Secondary 55P15, 06A15}
\keywords{Hom complex, square-free graph, homotopy type, poset topology}
\begin{document}
\begin{abstract}
Given finite simple graphs $G$ and $H$, the Hom complex $\mathrm{Hom}(G,H)$ is a polyhedral complex having the graph homomorphisms $G\to H$ as the vertices. We determine the homotopy type of each connected component of $\mathrm{Hom}(G,H)$ when $H$ is square-free, meaning that it does not contain the $4$-cycle graph $C_4$ as a subgraph. Specifically, for a connected $G$ and a square-free $H$, we show that each connected component of $\mathrm{Hom}(G,H)$ is homotopy equivalent to a wedge sum of circles. We further show that, given any graph homomorphism $f\colon G\to H$ to a square-free $H$, one can determine the homotopy type of the connected component of $\mathrm{Hom}(G,H)$ containing $f$ algorithmically. 
\end{abstract}

\maketitle
\setcounter{tocdepth}{1}
\tableofcontents

\section{Introduction}\label{sec:intro}
The \emph{Hom complex} $\Hom(G,H)$ is a certain polyhedral complex associated with a pair of finite simple graphs $G$ and $H$. Hom complexes have been used in the algebro-topological approach to the graph coloring problem; see e.g.~\cite{Lov78,Koz08}.
Although it is known that the homotopy type of a Hom complex can be quite complicated in general (see e.g.~\cite[Theorem~1.10]{DoSc12}), the homotopy type of (each connected component of) a Hom complex has been determined in certain special cases. For example, Babson and Kozlov \cite{BaKo06} showed that the Hom complex $\Hom(K_n,K_k)$ between complete graphs is homotopy equivalent to a wedge sum of $(k-n)$-spheres when $n \leq k$. 
We have shown in a recent paper \cite{cycle} that if $G$ is a finite connected graph and $C_k$ is the $k$-cycle graph with $k\geq 3$, then each connected component of $\Hom(G,C_k)$ is homotopy equivalent to a point or a circle, generalizing an earlier result by \v Cuki\'c and Kozlov \cite{CuKo06}.

In this paper, we determine the homotopy type of each connected component of $\Hom(G,H)$ when $H$ is square-free, meaning that $H$ does not contain the $4$-cycle graph $C_4$ as a (not necessarily induced) subgraph. 
To state our result concisely, we assume that $G$ and $H$ are connected and have at least two vertices; it is straightforward to deduce the results for a general $G$ and a general square-free $H$ from this case, as we will see in \cref{subsec:simplification}.
The following is the main theorem of this paper.

\begin{theorem}\label{thm:main}
    Let $G$ and $H$ be finite connected graphs with at least two vertices, with $H$ square-free. Then each connected component of $\Hom(G,H)$ is homotopy equivalent to either
    \begin{enumerate}[label=\emph{({\arabic*})}]
        \item a point,
        \item a circle, or 
        \item (the geometric realization of) a connected component of the graph $H\times K_2$.
    \end{enumerate}
    In particular, each connected component of $\Hom(G,H)$ is homotopy equivalent to a wedge sum of circles.
\end{theorem}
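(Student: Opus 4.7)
The plan is to study $\Hom(G,H)$ via its face poset, whose elements are multihomomorphisms $\eta\colon V(G)\to \Pow^*(V(H))$ satisfying the edge condition, with the cell of $\eta$ having dimension $\sum_{v\in V(G)}(|\eta(v)|-1)$. The central consequence of $H$ being square-free is that any two vertices of $H$ have at most one common neighbor. Hence, whenever $|\eta(v)|\geq 2$ for some $v\in V(G)$, each neighbor $w$ of $v$ in $G$ is forced to have $\eta(w)$ equal to the singleton $\{b_v\}$, where $b_v$ is the unique common neighbor in $H$ of the elements of $\eta(v)$. In particular, the support $\{v\in V(G):|\eta(v)|\geq 2\}$ is an independent set in $G$, and if a vertex carries a multi-value then all of its $G$-neighbors are rigidly singletons determined by $\eta(v)$.

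The next step is to use this rigidity to contract each connected component of $\Hom(G,H)$ onto a 1-dimensional subcomplex. Cells of dimension at least $2$ fall into two types: (a) a single vertex $v$ has $|\eta(v)|\geq 3$, or (b) two non-adjacent vertices $v_1,v_2$ each have $|\eta(v_i)|=2$. My plan is to construct a discrete Morse matching on the face poset, pairing each such higher-dimensional cell with a canonical codimension-one face obtained by removing a distinguished element from some $\eta(v)$ (say, the lexicographically smallest). Square-freeness forces the neighbors of multi-valued vertices to be singletons, which should make such a matching consistent and acyclic. Once all cells of dimension at least $2$ are collapsed, each connected component of $\Hom(G,H)$ is homotopy equivalent to a connected 1-dimensional CW complex, and hence to a wedge of circles; this already establishes the ``in particular" clause of the theorem.

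To refine the conclusion into the trichotomy, the plan is to construct a combinatorial map from each connected component $\Conn(f)$ of $\Hom(G,H)$ to the graph $H\times K_2$, informally tracking how $f(v_0)$ (for a fixed basepoint $v_0\in V(G)$) evolves along paths of 1-cells, together with the parity of the path. I expect this map to behave like a covering map onto its image, so that $\Conn(f)$ is either contractible (case (1), when the image is a tree in $H\times K_2$), homotopy equivalent to a circle (case (2), when a single short loop survives the collapse), or homotopy equivalent to a full connected component of $H\times K_2$ (case (3), when $\Conn(f)$ is large enough to realize every cycle of the cover).

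The hardest part will be the explicit construction of the Morse matching in the presence of type-(b) cells, since two non-adjacent vertices can simultaneously be multi-valued and the induced product $2$-cells must be collapsed compatibly with the type-(a) collapses around them. A second delicate point is separating cases (2) and (3): one must show that no intermediate homotopy type occurs — either a single primitive loop survives (giving $S^1$), or the entire cycle structure of a component of $H\times K_2$ is recovered — which likely requires a global argument combining the local Morse collapse with a covering-space analysis relating $G$, $H$, and the bipartite double cover.
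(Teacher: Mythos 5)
Your observation that square-freeness forces the multi-valued support of a cell to be an independent set of $G$, with each multi-valued vertex rigidifying all its $G$-neighbors to a determined singleton, is correct and is essentially the ``monochromatic neighborhood property'' (\cref{lem:sq-free-singleton}) that the paper also relies on. However, your overall strategy diverges substantially from the paper's, and there are two genuine gaps.

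First, the Morse-matching step is not established. You propose to pair every cell of dimension $\geq 2$ with a codimension-one face by deleting a distinguished (e.g.~lexicographically smallest) element of some $\eta(v)$, but this kind of canonical deletion rule does not automatically yield an \emph{acyclic} matching on a prodsimplicial complex: the type~(b) cells (with several pairwise non-adjacent multi-valued vertices) produce product cells whose faces can easily generate directed cycles under such a rule, precisely because the ``which coordinate do I shrink first'' choice is not globally consistent. You flag this yourself as ``the hardest part,'' but this is exactly the content that would need to be proved; as stated, there is no argument that the matching is acyclic, nor even that some matching with all critical cells in dimension $\leq 1$ exists. Note also that the paper never collapses $\Hom(G,H)$ to a $1$-dimensional subcomplex; it instead passes to the universal cover $E_f\subseteq\Hom(G,\Pi H)$ and shows that cover is contractible, which is a weaker and different statement than a simple-homotopy collapse of $B_f$.

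Second, the trichotomy refinement is not actually an argument but a plan to run a covering-space analysis (``I expect this map to behave like a covering map onto its image''). Even granting the $1$-dimensionality, passing from ``wedge of circles'' to ``either a point, $S^1$, or a component of $H\times K_2$'' requires identifying $\pi_1(B_f)$ exactly. The paper does this by explicitly building the cover $p_f\colon E_f\to B_f$ via the graph $\Pi H$ of reduced walks, showing $E_f$ is contractible (\cref{thm:contractible}), and then computing $\Aut(p_f)\cong\Gamma_f$ and invoking Wrochna's classification of the group of realizable walks (\cref{thm:Lambda-classification}). The map to $H\times K_2$ that you sketch is morally the quotient of the paper's construction, but showing that it is a covering and controlling its image is where the real work lies; nothing in your proposal does that. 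In short, your approach is a genuinely different route in spirit (direct Morse collapse vs.~explicit universal cover), but in its present form the two load-bearing steps---acyclicity of the matching and identification of the fundamental group---are both missing.
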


Here, $H\times K_2$ is the (categorical) product (recalled in \cref{subsec:graphs}) of $H$ and the complete graph $K_2$ with two vertices (i.e., an edge). 
It is easy to describe $H\times K_2$:
\begin{itemize}
    \item If $H$ is bipartite, then we have $H\times K_2\cong H \sqcup H$, where $\sqcup$ denotes a disjoint union of graphs. Hence $H\times K_2$ contains two connected components, both of which are isomorphic to $H$.
    \item If $H$ is not bipartite, then $H\times K_2$ is connected. It is a connected double cover of $H$ \cite{Waller-double-cover}. 
\end{itemize}
In particular, for the cycle graph $C_k$ ($k\geq 3$), we have 
\[
C_k\times K_2\cong \begin{cases}
    C_k \sqcup C_k &\text{if $k$ is even, and}\\
    C_{2k}  &\text{if $k$ is odd,}
\end{cases}
\]
and hence we obtain the abovementioned result of \cite{cycle} (with $k\neq 4$) as a special case.
Furthermore, we can omit the case (3) in \cref{thm:main} when $G$ is not bipartite. In fact, we can refine \cref{thm:main} as follows.
\begin{theorem}\label{prop:main-cases}
    Let $G$ and $H$ be finite connected graphs with at least two vertices, with $H$ square-free.
    \begin{itemize}
        \item If both $G$ and $H$ are bipartite, then $\Hom(G,H)$ is homotopy equivalent to the disjoint union of points, circles, and exactly two copies of (the geometric realization of) $H$.
        \item If $G$ is bipartite and $H$ is not, then $\Hom(G,H)$ is homotopy equivalent to the disjoint union of points, circles, and exactly one copy of (the geometric realization of) $H\times K_2$.
        \item If $H$ is bipartite and $G$ is not, then $\Hom(G,H)$ is empty. 
        \item If neither $G$ nor $H$ is bipartite, then $\Hom(G,H)$ is homotopy equivalent to the disjoint union of points and circles.
    \end{itemize}
\end{theorem}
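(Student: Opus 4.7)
The plan is to proceed case by case on the bipartiteness of $G$ and $H$, invoking Theorem~\ref{thm:main} as a black box and focusing only on which of the three possible homotopy types arise in each case and with what multiplicity. The third bullet, in which $H$ is bipartite but $G$ is not, is elementary: any graph homomorphism $G\to H$ restricts to each odd cycle of $G$, but no odd cycle admits a homomorphism to a bipartite graph, so $\Hom(G,H)$ has no vertices and is therefore empty.

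For the remaining cases the crucial task is to pin down exactly which connected components of $\Hom(G,H)$ have the homotopy type of a connected component of $H\times K_2$. Suppose that $G$ is bipartite with bipartition $V(G)=A\sqcup B$. For each ordered edge $(u,w)$ of $H$ (equivalently, each vertex of $H\times K_2$), define an ``edge-folding'' homomorphism $f_{(u,w)}\colon G\to H$ sending $A$ to $u$ and $B$ to $w$. I would first verify directly from the definition of the Hom complex that, for each edge $\{u,w\}$ of $H$, the two homomorphisms $f_{(u,w)}$ and $f_{(w,u)}$ are joined by a $1$-cell of $\Hom(G,H)$, and that together these vertices and $1$-cells form a subcomplex $F\subseteq\Hom(G,H)$ canonically isomorphic, as a $1$-dimensional CW-complex, to the geometric realization of $H\times K_2$.

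The heart of the matter is to show that $F$ realizes the ``type~(3)'' components of $\Hom(G,H)$ exactly once: the union of those connected components that meet $F$ should deformation retract onto $F$, and conversely any connected component whose homotopy type appears in case~(3) of Theorem~\ref{thm:main} should meet $F$. For the retraction I would exploit the poset-topological machinery underlying Theorem~\ref{thm:main}: the edge-folding homomorphisms ought to be distinguished in the relevant face poset, say as canonical representatives of a folding equivalence class, so that a standard closure/collapse argument retracts each surrounding component onto its trace in $F$. Granted this, Theorem~\ref{thm:main} combined with the two flavours of $H\times K_2$ yields the first and second bullets, since $H\times K_2\cong H\sqcup H$ when $H$ is bipartite and $H\times K_2$ is connected otherwise.

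It remains to handle the fourth bullet, where neither $G$ nor $H$ is bipartite, by ruling out type~(3) components altogether. Here the obstruction is again parity: every type~(3) component, by the characterisation above, should contain an edge-folding homomorphism, and such a homomorphism requires a bipartition of $G$. The main obstacle I anticipate throughout is the deformation-retraction step, since it is the only genuinely topological ingredient and the one place where one must truly leverage the cell structure developed for Theorem~\ref{thm:main}; the surrounding arguments reduce to elementary combinatorics of homomorphisms together with the standard dichotomy for $H\times K_2$.
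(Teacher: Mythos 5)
There is a genuine gap, and in fact several of the key claims in your construction are false.

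\textbf{First}, the identification of ``ordered edges of $H$'' with ``vertices of $H\times K_2$'' is incorrect: $V(H\times K_2)=V(H)\times\{0,1\}$ has $2|V(H)|$ elements, whereas there are $2|E(H)|$ ordered edges, and these counts are generally different. What is true is that ordered edges of $H$ are in bijection with the \emph{edges} of $H\times K_2$; but then a complex with vertex set the ordered edges of $H$ cannot be $H\times K_2$ itself. \textbf{Second}, $f_{(u,w)}$ and $f_{(w,u)}$ are \emph{not} joined by a $1$-cell of $\Hom(G,H)$: they differ at every vertex of $G$, not at exactly one, so they are not adjacent. Indeed, a set-valued homomorphism $\varphi$ dominating both would need $\{u,w\}\subseteq\varphi(a)$ and $\{u,w\}\subseteq\varphi(b)$ for adjacent $a\in A$, $b\in B$, forcing the loops $uu,ww$ to be edges of $H$. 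Worse, when $H$ is bipartite $f_{(u,w)}$ and $f_{(w,u)}$ typically lie in \emph{different} connected components of $\Hom(G,H)$ (e.g.\ $G=K_2$, $H=C_6$), so no path joins them at all. Hence your $F$ is not a subcomplex with the structure you describe, and it is certainly not isomorphic to $H\times K_2$. \textbf{Third}, and most seriously, the deformation-retraction step is not merely ``anticipated'' but actually unsupported: the identification of the type-(3) component with $H\times K_2$ in \cref{thm:main} is an abstract homotopy equivalence of Eilenberg--MacLane spaces obtained by computing $\pi_1$, not a geometric collapse onto a subcomplex, and there is no reason to expect a literal copy of $H\times K_2$ sitting inside $\Hom(G,H)$ as a deformation retract.

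The paper's proof takes a different and more economical route: it does not attempt to locate a copy of $H\times K_2$ inside the Hom complex. Instead, it leans on Wrochna's analysis (specifically \cite[Lemma~7.5 and Theorem~8.1]{Wro20}) to show that, for a given $f\colon G\to H$, the component $B_f$ falls into case~(3) of \cref{thm:Lambda-classification} if and only if $G$ is bipartite; then, when $G$ is bipartite, it exhibits homomorphisms factoring through $K_2$ as witnesses and observes that all such homomorphisms lie in the same component precisely when $H$ is non-bipartite, otherwise in two components. This turns the whole question into a short combinatorial count of components of ``$K_2$-factoring'' homomorphisms, with no need to build or retract onto an explicit subcomplex. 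Your underlying intuition that the $K_2$-factoring (edge-folding) homomorphisms single out the type-(3) components is on the right track and matches the paper's, but the concrete geometric realization you propose does not hold up.
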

(Notice that here we are \emph{not} claiming, e.g.~in the first case, that $\Hom(G,H)$ contains exactly two connected components which are homotopy equivalent to $H$. Indeed, the geometric realization of $H$ can be homotopy equivalent to either a point or a circle.)
Moreover, we can use an algorithm given in \cite{Wro20} to determine the homotopy type of the connected component of $\Hom(G,H)$ containing a given graph homomorphism $f\colon G\to H$ in polynomial time (see \cref{rmk:algo-for-Lambda}).

We now turn to an outline of the proof of our main theorem. The basic strategy is similar to our previous work \cite{cycle}. Given a graph homomorphism $f\colon G\to H$ between graphs satisfying the assumption of \cref{thm:main}, we explicitly describe the universal cover $p_f\colon E_f\to B_f$ of the connected component $B_f$ of $\Hom(G,H)$ containing $f$. This allows us (after some work) to prove that $E_f$ is contractible. By a standard result in algebraic topology (see e.g.~\cite[Appendix~1.B]{Hat02}), this implies that $B_f$ is an \emph{aspherical space}, i.e., that we have $\pi_n(B_f)\cong \{1\}$ for all natural numbers $n\neq 1$. 
More precisely, $B_f$ is the Eilenberg--MacLane space $K\bigl(\pi_1(B_f),1\bigr)$. In particular, the homotopy type of $B_f$ is completely determined by its fundamental group $\pi_1(B_f)$.
Another standard result in algebraic topology (see e.g.~\cite[Proposition~1.39]{Hat02}) says that $\pi_1(B_f)$ is isomorphic to the automorphism group $\Aut(p_f)$ of the universal cover $p_f\colon E_f\to B_f$. Hence we can once again exploit our explicit description of the universal cover to determine the latter group.
It turns out that we can apply results of \cite{Wro20} for this.

Thus the key step is the explicit construction of the universal cover $p_f\colon E_f\to B_f$. To this end, we introduce the \emph{graph $\Pi H$ of reduced walks} in a graph $H$. We will construct $E_f$ as a suitable subcomplex of the Hom complex $\Hom(G,\Pi H)$ with $\Pi H$ as the codomain. 

Throughout, we employ the technique of \emph{poset topology} (briefly recalled in \cref{subsec:poset-topology}) and give combinatorial arguments involving graphs and posets, rather than topological arguments.

\subsection*{Outline of this paper}
After recalling basic notions and fixing our notation in \cref{sec:prelim}, we introduce the graph $\Pi H$ of reduced walks in a graph $H$ in \cref{sec:PiH}.
The remaining three sections (\cref{sec:covering,sec:contractibility,sec:fundamental-group}) are devoted to the proofs of the main theorem and its refinement (\cref{thm:main,prop:main-cases}).
Thus, given a homomorphism $f\colon G\to H$ between graphs satisfying the assumption of \cref{thm:main}, we want to determine the homotopy type of the connected component $B_f$ of $\Hom(G,H)$ containing $f$. 
\cref{sec:covering} shows that a certain continuous map $p_f\colon E_f\to B_f$ defined via $\Pi H$ is a covering map. \cref{sec:contractibility} shows that $E_f$ is contractible. This in particular implies that $E_f$ is simply-connected, i.e., $p_f\colon E_f\to B_f$ is the universal cover of $B_f$. Finally, in \cref{sec:fundamental-group}, we determine the automorphism group $\Aut(p_f)$ of the universal cover $p_f\colon E_f\to B_f$, thus leading to the proofs of \cref{thm:main,prop:main-cases}. 
In \cref{apx:comparing}, we discuss a different approach to construct the universal cover $p_f\colon E_f\to B_f$.

\subsection*{Related work}
As mentioned above, this paper generalizes \cite{cycle}, and an explicit description of the universal cover is the key step in both \cite{cycle} and in this paper.
A novel contribution in this paper includes the introduction of the graph $\Pi H$.
The details of the proofs are also different.
Thanks to the simpler situation, in \cite{cycle} we were able to construct the universal cover and prove its contractibility by a direct topological method;
here we use results of poset topology and give a more combinatorial proof. 
Another difference, to the advantage of \cite{cycle}, is that the algorithm to determine the homotopy type of a given connected component of the Hom complex given in \cite[Theorem~5.1]{cycle} is considerably simpler than the corresponding algorithm (essentially that of \cite{Wro20}) in this paper. 

The work \cite{Wro20} of Wrochna is closely related to this paper.
Although \cite{Wro20} does not explicitly deal with Hom complexes and instead focuses on the $H$-recoloring problem (which is essentially the reachability problem in a Hom complex), many crucial observations in this paper are already present in \cite{Wro20}, as we point out in due course. 
Indeed, as we will see in \cref{rmk:algo-for-Lambda}, we can even use the main algorithm of \cite{Wro20} to determine the homotopy type of $B_f$.
Incidentally, the paper \cite{Wro20} concludes with the following sentence.  
\begin{quote}
    Finally, it could be interesting to explore the implications of the square-free property for the whole Hom complex.
\end{quote}
\cref{thm:main,prop:main-cases} provide an answer to this.

Hom complexes of the form $\Hom(K_2,H)$ have been studied widely. For any finite graph $H$, $\Hom(K_2,H)$ is known to be homotopy equivalent to the \emph{neighborhood complex} $\mathrm{N}(H)$ of $H$ introduced in \cite{Lov78} (see \cite[Proposition~4.2]{BaKo06}), as well as to the \emph{box complex} $\mathrm{B}(H)$ of $H$ introduced in \cite{MZ-box} (see \cite[Remark~5.4]{Cso07}).
For a connected square-free $H$, Kamibeppu \cite{Kamibeppu} showed that $\Hom(K_2,H)$ is homotopy equivalent to (the geometric realization of) $H\times K_2$. (This can also be seen from our proof of \cref{prop:main-cases}.)
The fact that the homotopy type of $\Hom(K_2,H)$ becomes simple when $H$ is square-free had been anticipated in \cite[Section~4]{MZ-box}.
In fact, $\Hom(K_2,H)$ admits a $\mathbb{Z}/2\mathbb{Z}$-action induced from the natural $\mathbb{Z}/2\mathbb{Z}$-action on $K_2$, and \cite{Kamibeppu} determines the $\mathbb{Z}/2\mathbb{Z}$-homotopy type of $\Hom(K_2,H)$, which is a stronger result than the determination of its ordinary homotopy type.

Matsushita~\cite{Mat25} independently proved the main theorem of this paper, after he had a few discussions with the third-named author of this paper about the contents of \cite{cycle} and the statement (but not the proof) of our main theorem in this paper. 
Interestingly, his proof involves a \emph{different} explicit construction of the universal cover of $B_f$ based on his earlier work \cite{Mat17JMSUT}. 
We describe the relationship between the two approaches in \cref{apx:comparing}; see in particular \cref{ex:cover-approach}.

\subsection*{Remarks on combinatorial applications}
As mentioned at the beginning of this section, Hom complexes have been introduced in the context of the graph coloring problem.
Our main theorem suggests that $\Hom(G,H)$ would be too simple to apply to the graph coloring problem when $H$ is square-free, generalizing the aforementioned remark of \cite[Section~4]{MZ-box} from the case where $G=K_2$ to arbitrary $G$. 
Whereas this observation could be useful in preventing fruitless attempts, the reader might well like to see a more positive application.

We claim that, from the viewpoint of the $H$-recoloring problem \cite{Wro20}, the homotopy type of $\Hom(G,H)$ (or even just the fundamental groupoid of $\Hom(G,H)$) provides deeper understanding to the structure of all solutions to a given instance of the $H$-recoloring problem. Indeed, such a solution gives rise to a path in $\Hom(G,H)$, and the existence of an (endpoint-preserving) homotopy between such paths in $\Hom(G,H)$ can be regarded as giving a reasonable notion of equivalence between the solutions. 
Thus the fundamental groupoid of $\Hom(G,H)$ reveals how many ``essentially different'' solutions a given instance of the $H$-recoloring problem admits. As our discussion in \cref{sec:fundamental-group} shows, for a square-free $H$, Wrochna's notion of \emph{realizable walk} \cite[Section~3]{Wro20} gives an answer to this question: two solutions are equivalent in the above sense if and only if the associated realizable walks coincide. 

\subsection*{Acknowledgments}
We thank Takahiro Matsushita for the several fruitful discussions.
In particular, we note that
\begin{itemize}
    \item the use of the graph $H\times K_2$ in the statement of \cref{thm:main}, which replaces our earlier formulation in terms of the rank of the (free) fundamental group $\pi_1(B_f)$ (see \cref{eqn:rank-of-fundamental-group}), is suggested by him,
    \item the proof of contractibility of $E_f$ given in \cref{sec:contractibility}, which improves our previous proof, is based on his proof, and
    \item the contents of \cref{apx:comparing} grew out of discussions with him.
\end{itemize}

We are grateful to the anonymous referees for their helpful comments. 

The first-named author is supported by the Grant Agency of the Czech Republic under the grant 22-02964S.
The second-named author is supported by JST ERATO Grant Number JPMJER2301 and JSPS KAKENHI Grant Number JP21K17700, Japan.
The third-named author is supported by JSPS KAKENHI Grant Numbers JP23K12974 and JP24H00686, Japan.

\section{Preliminaries}\label{sec:prelim}
In \cref{subsec:graphs,subsec:graph-hom,subsec:poset-topology}, we recall basic notions used in this paper and fix our notation. 
In \cref{subsec:simplification}, we justify the simplifying assumption in \cref{thm:main}.
We denote by $\mathbb{N}$ the set $\{\,0,1,2,\dots\,\}$
of natural numbers.

\subsection{Graphs}\label{subsec:graphs}
By a \emph{graph} $G$, we mean a pair $G=\bigl(V(G),E(G)\bigr)$, where $V(G)$ is a set and $E(G)$ is a set of subsets of $V(G)$ of cardinality two; that is, our graphs are undirected and have no loops nor multiple edges. A graph $G$ is \emph{finite} if $V(G)$ is a finite set and \emph{infinite} otherwise. 
In this paper, graphs are \emph{not} assumed to be finite unless explicitly stated so. 
Elements of $V(G)$ are called \emph{vertices} of $G$ and elements of $E(G)$ \emph{edges} of $G$. An edge $\{u,v\}$ is also denoted by $uv$.
Two vertices $u$ and $v$ of $G$ are \emph{adjacent} if $uv\in E(G)$.
The \emph{neighborhood} $N_G(u)$ of $u\in V(G)$ is the set of all vertices of $G$ adjacent to $u$.
A vertex of $G$ is \emph{isolated} if it is not adjacent to any vertex of $G$.

A graph $H$ is \emph{square-free} if it does not contain the $4$-cycle graph $C_4$ as a (not necessarily induced) subgraph. Equivalently, $H$ is square-free if and only if we have 
\begin{equation}\label{eqn:sq-free}
    |N_H(x)\cap N_H(y)|\leq 1
\end{equation}
for each pair of distinct vertices  $x,y\in V(H)$ \cite[Section~1]{Wro20}.

Given graphs $G$ and $H$, their (categorical) \emph{product} $G\times H$ is defined by $V(G\times H)=V(G)\times V(H)$ and 
\[
E(G\times H)=\bigl\{ \{(u,x),(v,y)\} \,\big\vert\, uv\in E(G)\text{ and } xy\in E(H) \bigr\}.
\]

\subsection{Graph homomorphisms and set-valued homomorphisms}\label{subsec:graph-hom}
Let $G$ and $H$ be graphs.

A \emph{graph homomorphism} from $G$ to $H$ is a function $f\colon V(G)\to V(H)$ such that, for each $u,v\in V(G)$, if $u$ and $v$ are adjacent in $G$, then so are $f(u)$ and $f(v)$ in $H$.
We write $f\colon G\to H$ to mean that $f$ is a graph homomorphism from $G$ to $H$.

A graph homomorphism $f\colon G\to H$ is said to be a \emph{covering map of graphs}, if for each $u\in V(G)$ and each $x\in V(H)$ such that $f(u)$ and $x$ are adjacent in $H$, there exists a unique $v\in V(G)$ such that $u$ and $v$ are adjacent in $G$ and $f(v)=x$; in other words, a graph homomorphism $f\colon G\to H$ is a covering map if and only if $f|_{N_G(u)}\colon N_{G}(u)\to N_H\bigl(f(u)\bigr)$ is a bijection for each $u\in V(G)$.

A \emph{set-valued homomorphism} from $G$ to $H$ is a function $\varphi\colon V(G)\to \Pow\bigl(V(H)\bigr)\setminus\{\emptyset\}$, where $\Pow\bigl(V(H)\bigr)$ is the set of all subsets of $V(H)$, such that for each $uv\in E(G)$, each $x\in \varphi(u)$, and each $y\in \varphi(v)$, we have $xy\in E(H)$. We write $\varphi\colon G\pto H$ to mean that $\varphi$ is a set-valued homomorphism from $G$ to $H$.
Notice that any graph homomorphism $f\colon G\to H$ can be regarded as a set-valued homomorphism $f\colon G\pto H$ mapping each $u\in V(G)$ to the singleton $\{f(u)\}$.
For set-valued homomorphisms $\varphi,\psi\colon G\pto H$, we write $\varphi \leq \psi$ if we have $\varphi(u)\subseteq \psi(u)$ for each $u\in V(G)$. 
This defines a partial order; the resulting poset of all set-valued homomorphisms from $G$ to $H$ is denoted by $\Hom(G,H)$. 
The \emph{Hom complex}, which is the main subject of this paper, is the topological space $\gr{\Hom(G,H)}$ naturally associated with the poset $\Hom(G,H)$ by the process explained in \cref{subsec:poset-topology}. As explained there, we can investigate the topological properties of $\gr{\Hom(G,H)}$ by means of combinatorial arguments on the poset $\Hom(G,H)$.
See \cite[Chapter~18]{Koz08} or \cite[Section~1]{BaKo06} for examples of Hom complexes.

Two graph homomorphisms $f,g\colon G\to H$ are \emph{adjacent} if there exists $u\in V(G)$ such that 
\begin{itemize}
    \item $f(u)\neq g(u)$ and 
    \item for all $v\in V(G)\setminus\{u\}$, $f(v)=g(v)$.
\end{itemize}
If this is the case, then the function $\varphi\colon V(G)\to \Pow\bigl(V(H)\bigr)\setminus \{\emptyset\}$, mapping $v\in V(G)$ to $\varphi(v)=\{f(v),g(v)\}$ (which is a singleton unless $v=u$), is a set-valued homomorphism $\varphi\colon G\pto H$. We have $f\leq \varphi$ and $g\leq \varphi$ in $\Hom(G,H)$. 

If $K$ is another graph and $k\colon H\to K$ is a graph homomorphism, then we have a monotone map 
\[
\Hom(G,k)\colon \Hom(G,H)\to \Hom (G,K)
\]
mapping $\varphi\colon G\pto H$ to $k\varphi\colon G\pto K$ given by $(k\varphi)(u)=k\bigl(\varphi(u)\bigr)$ for each $u\in V(G)$.

\subsection{Poset topology}\label{subsec:poset-topology}
In this paper, whenever we refer to a \emph{subposet} of a poset $P=(P,\leq_P)$, we mean a subset $S$ of $P$ equipped with the order relation $\leq_S$ defined as $p\leq_Sq$ if and only if $p\leq_Pq$ for each $p,q\in S$.

For any poset $P$, the \emph{order complex} $\Delta(P)$ of $P$ is the abstract simplicial complex whose vertex set is $P$ and whose simplices are the finite chains in $P$. Its \emph{geometric realization} $\gr{\Delta(P)}$ is the topological space obtained by gluing geometric simplices according to $\Delta(P)$; see e.g.~\cite[Section~9]{Bjorner-topology} for details. 
The topological space $\gr{\Delta(P)}$ is called the \emph{classifying space} of $P$.

Any monotone map $\alpha\colon P\to Q$ between posets induces a simplicial map $\Delta(\alpha)\colon \Delta(P)\to \Delta(Q)$, and hence a continuous map $\gr{\Delta(\alpha)}\colon \gr{\Delta(P)}\to\gr{\Delta(Q)}$ between the classifying spaces. 
In what follows, we simply denote $\gr{\Delta(P)}$ by $\gr{P}$ and $\gr{\Delta(\alpha)}$ by $\gr{\alpha}$.

We now recall that the notion of ``connected component'' is compatible with this process of turning a poset $P$ into its classifying space $\gr{P}$. 
Let $P$ be a poset. Two elements $p$ and $q$ of $P$ are \emph{connected} if $p$ and $q$ are related by the equivalence relation generated by the order relation $\leq$ on $P$. In other words, $p$ and $q$ are connected if and only if there exist $n\in\mathbb{N}$ and $r_1,r_2,\dots, r_{2n}\in P$ such that $p\leq r_1\geq r_2\leq \dots \geq r_{2n}\leq q$. For any $p\in P$, we denote by $\Conn(P,p)$ the subposet of $P$ consisting of all elements of $P$ connected to $p$. 
On the other hand, every element $p$ of $P$ gives rise to a $0$-simplex $\{p\}$ of $\Delta(P)$, and hence a point of $\gr{\Delta(P)}=\gr{P}$, which we denote by $x_p\in\gr{P}$. Two elements $p$ and $q$ of $P$ are connected in the above sense if and only if the two points $x_p$ and $x_q$ are (path-)connected in $\gr{P}$. 
Indeed, writing the connected component of a topological space $X$ containing a point $x\in X$ as $\Conn(X,x)$, we have $\Conn(\gr{P},x_p)\cong\gr{\Conn(P,p)}$ for any poset $P$ and any $p\in P$.

Now we can state our main goal in this paper more precisely. Given a graph homomorphism $f\colon G\to H$ between graphs satisfying the assumption of \cref{thm:main}, we will determine the homotopy type of the connected component of the Hom complex $\gr{\Hom(G,H)}$ containing (the point corresponding to) $f$, i.e., the classifying space of the poset $\Conn\bigl(\Hom(G,H),f\bigr)$. The poset $\Conn\bigl(\Hom(G,H),f\bigr)$ is denoted by $B_f$.

Various results relating properties of a poset $P$ (resp.~a monotone map $\alpha\colon P\to Q$) and those of the topological space $\gr{P}$ (resp.~the continuous map $\gr{\alpha}\colon \gr{P}\to \gr{Q}$) are known; see e.g.~\cite{Bjorner-topology,Wachs-poset}. We will recall some of them when needed. 
Due to the intimate relationship between $P$ and $\gr{P}$, it is convenient to blur the distinction. Thus we will call the poset $\Hom(G,H)$ a \emph{Hom complex}. Also, we say that a poset $P$ is \emph{homotopy equivalent to} a topological space $X$ if $\gr{P}$ and $X$ are homotopy equivalent; a poset $P$ is \emph{contractible} if $\gr{P}$ is;
a subposet $S$ of a poset $P$ is a \emph{strong deformation retract} if the subspace $\gr{S}$ of $\gr{P}$ is; and so on.

\subsection{Simplification}\label{subsec:simplification}
In this subsection, we explain why our assumption in \cref{thm:main}, that the finite graphs $G$ and $H$ are connected and have at least two vertices, is harmless.

First observe that, since we have $\Hom(G\sqcup G',H)\cong \Hom(G,H)\times \Hom(G',H)$ in general \cite[Section~2.4]{BaKo06}, where $G\sqcup G'$ denotes the disjoint union of two graphs $G$ and $G'$, we may restrict our attention to the case where $G$ is connected. Moreover, it is clear that $\Hom(G,H)$ is contractible when $G$ consists of only one vertex (and $H$ is nonempty). Hence we may moreover assume that $G$ has at least two vertices. 
We can also assume that $H$ is connected. Indeed, let $f\colon G\to H$ be a graph homomorphism, where $G$ is a connected graph with at least two vertices. Consider the factorization
\[
\begin{tikzpicture}[baseline=-\the\dimexpr\fontdimen22\textfont2\relax ]
      \node(0) at (0,0.5) {$G$};
      \node(1) at (1.5,-0.5) {$H'$};
      \node(2) at (3,0.5) {$H$};
      \draw [->] (0) to node[auto,swap,labelsize] {$f'$} (1);
      \draw [->] (1) to node[auto,swap,labelsize] {inclusion} (2);
      \draw [->] (0) to node[auto,labelsize] {$f$} (2);
\end{tikzpicture}
\]
where $H'$ is the connected component of $H$ containing the image of $f$. Then it is easy to see that the connected component of $\Hom(G,H)$ containing $f$ is canonically order isomorphic to the connected component of $\Hom(G,H')$ containing $f'$. 
Finally, if $H$ consists of only one vertex and $G$ is a connected graph with at least two vertices, then (since we only consider simple graphs) there cannot be any homomorphism $G\to H$, and hence $\Hom(G,H)$ is empty. 
Thus in \cref{sec:covering,sec:contractibility,sec:fundamental-group}, we will mainly work under the following assumption: 
\begin{quote}
    $G$ and $H$ are finite connected graphs with at least two vertices, with $H$ square-free,
\end{quote}
which we refer to as ``the assumption of \cref{thm:main}.''
Incidentally, this assumption was also adopted in \cite{Wro20}.

\section{The graph $\Pi H$ of reduced walks in a graph $H$}\label{sec:PiH}
As explained in \cref{sec:intro}, the key step in the proof of our main theorem is an explicit description of the universal cover of $B_f$, the connected component of the Hom complex $\Hom(G,H)$ containing the given homomorphism $f\colon G\to H$. 
We will construct the universal cover by first building the graph $\Pi H$ of \emph{reduced walks} in $H$.
The aim of this section is to define $\Pi H$ and establish its properties. 

\subsection{Walks and reduced walks}
\label{subsec:walk}
We begin with a review of walks and reduced walks in a graph (see e.g.~\cite[Section~2]{Wro20} and \cite[Sections~3.6 and 3.7]{Spanier}).
Let $H$ be a graph. 

A \emph{walk} in $H$ is a sequence $\xi=(x_0,x_1,\dots,x_\ell)$ of vertices of $H$ with $\ell\geq 0$, such that  $x_{i}x_{i+1}\in E(H)$ for each $0\leq i\leq \ell-1$. 
The natural number $\ell$ is called the \emph{length} of $\xi$, and is denoted by $\len(\xi)$. 
The vertex $x_0$ (resp.~$x_\ell$) is called the \emph{source} (resp.~\emph{target}) of $\xi$, and is denoted by $s(\xi)$ (resp.~$t(\xi)$).

Given $y,z\in V(H)$, a \emph{walk from $y$ to $z$} is a walk $\xi$ in $H$ such that $s(\xi)=y$ and $t(\xi)=z$.
A walk $\xi$ in $H$ with $s(\xi)=t(\xi)$ is called a \emph{closed walk} in $H$ (on $s(\xi)$).

Given walks $\xi=(x_0,x_1,\dots,x_\ell)$ and $\eta=(y_0,y_1,\dots, y_{\ell'})$ in $H$ such that $t(\xi)=s(\eta)$ (i.e., $x_\ell=y_0$), their \emph{concatenation} $\xi\bullet\eta$ is the walk $(x_0,x_1,\dots, x_{\ell}, y_1,\dots, y_{\ell'})$. 
The \emph{inverse} of a walk $\xi=(x_0,x_1,\dots,x_\ell)$ is the walk $\xi^{-1}=(x_\ell,\dots, x_1,x_0)$.

A walk $(x_0,\dots,x_\ell)$ in $H$ is \emph{reduced} if we have $x_i\neq x_{i+2}$ for each $0\leq i\leq \ell-2$. 
If a walk $\xi=(x_0,\dots,x_\ell)$ is not reduced, say because we have $x_i=x_{i+2}$ for some $0\leq i\leq \ell-2$, then the walk 
\[
(x_0,\dots, x_{i-1},x_i,x_{i+3},\dots, x_\ell)
\]
is said to be obtained from $\xi$ by \emph{elementary reduction}. 
It is not hard to see that any walk $\xi$ can be turned into a unique reduced walk $\overline \xi$ by repeated application of elementary reduction.
For reduced walks $\xi$ and $\eta$ in $H$ with $t(\xi)=s(\eta)$, their \emph{product} $\xi\cdot\eta$ is the reduced walk $\overline{\xi\bullet\eta}$. For example, for any reduced walk $\xi$, $\xi\cdot \xi^{-1}=\overline{\xi \bullet\xi^{-1}}$ is the walk $\bigl(s(\xi)\bigr)$ of length $0$.

Let $G$ and $H$ be graphs and $f\colon G\to H$ a graph homomorphism.
For any walk $\omega=(w_0,\dots, w_\ell)$ in $G$, we have a walk $\bigl(f(w_0),\dots, f(w_\ell)\bigr)$ in $H$, which we call $f(\omega)$.
Notice that we have $f(\omega\bullet\omega')=f(\omega)\bullet f(\omega')$ whenever the concatenation $\omega\bullet\omega'$ is defined, and $f(\omega^{-1})=f(\omega)^{-1}$.
For a reduced walk $\omega$ in $G$, the walk $f(\omega)$ in $H$ may fail to be reduced in general. However, we have the following.

\begin{proposition}\label{prop:covering-preserves-reduced-walks}
    Let $G$ and $H$ be graphs and $f\colon G\to H$ a covering map of graphs. Then for any reduced walk $\omega$ in $G$, the walk $f(\omega)$ in $H$ is reduced.
\end{proposition}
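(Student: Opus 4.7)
The plan is to argue by contradiction, directly exploiting the defining property of a covering map of graphs. Let $\omega=(w_0,w_1,\dots,w_\ell)$ be a reduced walk in $G$, and suppose for contradiction that $f(\omega)=(f(w_0),\dots,f(w_\ell))$ fails to be reduced. Then there exists an index $0\leq i\leq \ell-2$ with $f(w_i)=f(w_{i+2})$.

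First I would observe that, since $w_iw_{i+1}$ and $w_{i+1}w_{i+2}$ are both edges of $G$ (and our graphs have no loops), both $w_i$ and $w_{i+2}$ lie in $N_G(w_{i+1})$. The covering map condition says that the restriction
\[
f|_{N_G(w_{i+1})}\colon N_G(w_{i+1})\to N_H\bigl(f(w_{i+1})\bigr)
\]
is a bijection, in particular injective. Applying injectivity to the equality $f(w_i)=f(w_{i+2})$ forces $w_i=w_{i+2}$, contradicting the hypothesis that $\omega$ is reduced.

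Honestly, I do not expect a genuine obstacle here: the proposition is a one-step consequence of unwinding the two definitions involved (``reduced walk'' and ``covering map of graphs''). The only thing to be mildly careful about is ensuring that the elements $w_i$ and $w_{i+2}$ really do belong to $N_G(w_{i+1})$, which uses the simple-graph convention that adjacent vertices are distinct; once this is noted, the bijectivity clause in the definition of a covering map finishes the argument immediately.
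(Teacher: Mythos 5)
Your proof is correct and takes essentially the same approach as the paper: both arguments rest on the injectivity of $f|_{N_G(w_{i+1})}$, the only difference being that you phrase it by contradiction while the paper states the contrapositive directly.
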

\begin{proof}
    Let $\omega=(w_0,\dots, w_\ell)$ be a reduced walk in $G$. For each $0\leq i\leq \ell-2$, since the function $f|_{N_G(w_{i+1})}\colon N_G(w_{i+1})\to N_H\bigl(f(w_{i+1})\bigr)$ is injective, $w_i\neq w_{i+2}$ implies $f(w_i)\neq f(w_{i+2})$. 
\end{proof}

\subsection{The graph $\Pi H$}
\label{subsec:PiH}
We now define the graph $\Pi H$ of reduced walks in a graph $H$. 

\begin{definition}
\label{def:adjacency-reduced-walks}
    Let $H$ be a graph. Two reduced walks $\xi$ and $\eta$ in $H$ are said to be \emph{adjacent} if
    \begin{itemize}
        \item $s(\xi)$ and $s(\eta)$ are adjacent in $H$,
        \item $t(\xi)$ and $t(\eta)$ are adjacent in $H$, and 
        \item $\xi=\bigl(s(\xi),s(\eta)\bigr)\cdot \eta\cdot \bigl(t(\eta),t(\xi)\bigr)$.
    \end{itemize}
    Notice that if the first two conditions are satisfied, then the final condition is equivalent to $\bigl(s(\eta),s(\xi)\bigr)\cdot \xi\cdot \bigl(t(\xi),t(\eta)\bigr)=\eta$. Thus the adjacency relation is symmetric.

    The graph $\Pi H$ is defined as follows. The set $V(\Pi H)$ of vertices of $\Pi H$ is the set of all reduced walks in $H$, and two reduced walks in $H$ are adjacent in the graph $\Pi H$ if and only if they are adjacent in above the sense.
    Notice that the graph $\Pi H$ may be infinite even when $H$ is a finite graph.
\end{definition}

\begin{remark}
    As already mentioned, the graph $\Pi H$ will be used in our construction of the universal cover of each connected component of $\Hom(G,H)$, where the graphs $G$ and $H$ satisfy the assumption of \cref{thm:main}. 
    Whereas it seems difficult to fully justify the above definition of $\Pi H$ from this viewpoint at an early stage of development
    (we believe that justifications will emerge from various properties of $\Pi H$ established in \cref{subsec:PiH,subsec:algebraic-str-HomGPiH,subsec:relation-to-Wrochna} as well as in \cref{subsec:properties-of-Pi}), here we motivate its definition by means of the notion of \emph{$S$-walk} introduced in \cite[Section~3]{Wro20}.
    Suppose that $G$ and $H$ are graphs satisfying the assumption of \cref{thm:main}, and that we have a sequence $S=(f_0,f_1,\dots,f_n)$ of graph homomorphisms from $G$ to $H$ in which $f_i$ and $f_{i+1}$ are adjacent (see \cref{subsec:graph-hom}) for each $0\leq i\leq n-1$.
    Then, for each vertex $v$ of $G$, the associated \emph{$S$-walk} $S(v)$ is the walk $\bigl(f_0(v),x_1,f_{k_1}(v),x_2,\dots, x_{n(v)},f_{k_{n(v)}}(v)\bigr)$ in $H$, where
    \begin{itemize}
        \item $k_1<\dots<k_{n(v)}$ are all integers $i \in \{1,2,\dots, n\}$ with $f_{i-1}(v)\neq f_i(v)$, and
        \item for each $1\leq j\leq n(v)$, $x_j$ is the unique element of $N_H\bigl(f_{k_j-1}(v)\bigr)\cap N_H\bigl(f_{k_j}(v)\bigr)$ (see \cref{eqn:sq-free}), or equivalently, is the vertex of $H$ satisfying $f_{k_j-1}(w)=x_j=f_{k_j}(w)$ for any vertex $w$ of $G$ adjacent to $v$.
    \end{itemize}
    The above definition of the graph $\Pi H$ is such that the assignment $v\mapsto \overline{S(v)}$ becomes a graph homomorphism $G\to \Pi H$.
    In this regard, we also remark that observations somewhat related to the definition of adjacency in \cref{def:adjacency-reduced-walks} can be found in the proof of \cite[Theorem~6.1]{Wro20}. 
\end{remark}

Intuitively, two reduced walks in $H$ are adjacent in $\Pi H$ if they ``almost coincide.'' This can be made formal as follows.
\begin{proposition}
\label{prop:adjacency-cases}
    Let $H$ be a graph and $\xi=(x_0,\dots,x_\ell)$ and $\eta=(y_0,\dots, y_{\ell'})$ be reduced walks in $H$. Then $\xi$ and $\eta$ are adjacent if and only if one of the following conditions is satisfied.
\begin{itemize}
    \item[\emph{(A1)}] We have $\ell+2=\ell'$ and $x_{i}=y_{i+1}$ for each $0\leq i\leq \ell$.
    \begin{equation*}
\begin{tikzpicture}[baseline=-\the\dimexpr\fontdimen22\textfont2\relax ]
      \node(0) at (0,0) {$\bullet$};
      \node(1) at (1,0) {$\bullet$};
      \node(2) at (2,0) {$\bullet$};
      \node(3) at (3,0) {$\cdots$};
      \node(4) at (4,0) {$\bullet$};
      \node(5) at (5,0) {$\bullet$}; 
      \node(6) at (6,0) {$\bullet$}; 
      \draw (0,0) to (3);
      \draw (3) to (6,0);
      %\node at (0.north) {$x_0$}; 
      \node at (1.north) {$x_0$}; 
      \node at (2.north) {$x_1$}; 
      \node at (4.north) {$x_{\ell-1}$}; 
      \node at (5.north) {$x_{\ell}$}; 
      %\node at (6.north) {$x_{\ell}$}; 
      \node at (0.south) {$y_0$}; 
      \node at (1.south) {$y_1$}; 
      \node at (2.south) {$y_2$}; 
      \node at (4.south) {$y_{\ell'-2}$}; 
      \node at (5.south) {$y_{\ell'-1}$}; 
      \node at (6.south) {$y_{\ell'}$}; 
\end{tikzpicture}
\end{equation*}
    \item[\emph{(A2)}] We have $\ell=\ell'>0$ and $x_{i+1}=y_i$ for each $0\leq i\leq \ell-1$.
    \begin{equation*}
\begin{tikzpicture}[baseline=-\the\dimexpr\fontdimen22\textfont2\relax ]
      \node(0) at (0,0) {$\bullet$};
      \node(1) at (1,0) {$\bullet$};
      \node(2) at (2,0) {$\bullet$};
      \node(3) at (3,0) {$\cdots$};
      \node(4) at (4,0) {$\bullet$};
      \node(5) at (5,0) {$\bullet$}; 
      \node(6) at (6,0) {$\bullet$}; 
      \draw (0,0) to (3);
      \draw (3) to (6,0);
      \node at (0.north) {$x_0$}; 
      \node at (1.north) {$x_1$}; 
      \node at (2.north) {$x_2$}; 
      \node at (4.north) {$x_{\ell-1}$}; 
      \node at (5.north) {$x_{\ell}$}; 
      %\node at (6.north) {$x_{\ell}$}; 
      %\node at (0.south) {$y_0$}; 
      \node at (1.south) {$y_0$}; 
      \node at (2.south) {$y_1$}; 
      \node at (4.south) {$y_{\ell'-2}$}; 
      \node at (5.south) {$y_{\ell'-1}$}; 
      \node at (6.south) {$y_{\ell'}$}; 
\end{tikzpicture}
\end{equation*}
    \item[\emph{(A3)}] We have $\ell=\ell'+2$ and $x_{i+1}=y_i$ for each $0\leq i\leq \ell'$.
    \begin{equation*}
\begin{tikzpicture}[baseline=-\the\dimexpr\fontdimen22\textfont2\relax ]
      \node(0) at (0,0) {$\bullet$};
      \node(1) at (1,0) {$\bullet$};
      \node(2) at (2,0) {$\bullet$};
      \node(3) at (3,0) {$\cdots$};
      \node(4) at (4,0) {$\bullet$};
      \node(5) at (5,0) {$\bullet$}; 
      \node(6) at (6,0) {$\bullet$}; 
      \draw (0,0) to (3);
      \draw (3) to (6,0);
      \node at (0.north) {$x_0$}; 
      \node at (1.north) {$x_1$}; 
      \node at (2.north) {$x_2$}; 
      \node at (4.north) {$x_{\ell-2}$}; 
      \node at (5.north) {$x_{\ell-1}$}; 
      \node at (6.north) {$x_{\ell}$}; 
      %\node at (0.south) {$y_0$}; 
      \node at (1.south) {$y_0$}; 
      \node at (2.south) {$y_1$}; 
      \node at (4.south) {$y_{\ell'-1}$}; 
      \node at (5.south) {$y_{\ell'}$}; 
      %\node at (6.south) {$y_{\ell'}$}; 
\end{tikzpicture}
\end{equation*}
    \item[\emph{(A4)}] We have $\ell=\ell'>0$ and $x_{i}=y_{i+1}$ for each $0\leq i\leq \ell-1$.
    \begin{equation*}
\begin{tikzpicture}[baseline=-\the\dimexpr\fontdimen22\textfont2\relax ]
      \node(0) at (0,0) {$\bullet$};
      \node(1) at (1,0) {$\bullet$};
      \node(2) at (2,0) {$\bullet$};
      \node(3) at (3,0) {$\cdots$};
      \node(4) at (4,0) {$\bullet$};
      \node(5) at (5,0) {$\bullet$}; 
      \node(6) at (6,0) {$\bullet$}; 
      \draw (0,0) to (3);
      \draw (3) to (6,0);
      %\node at (0.north) {$x_0$}; 
      \node at (1.north) {$x_0$}; 
      \node at (2.north) {$x_1$}; 
      \node at (4.north) {$x_{\ell-2}$}; 
      \node at (5.north) {$x_{\ell-1}$}; 
      \node at (6.north) {$x_{\ell}$}; 
      \node at (0.south) {$y_0$}; 
      \node at (1.south) {$y_1$}; 
      \node at (2.south) {$y_2$}; 
      \node at (4.south) {$y_{\ell'-1}$}; 
      \node at (5.south) {$y_{\ell'}$}; 
      %\node at (6.south) {$y_{\ell'}$}; 
\end{tikzpicture}
\end{equation*}
    \item[\emph{(A5)}] We have $\ell=\ell'=0$ and $x_{0}y_0\in E(H)$.
\end{itemize}
\end{proposition}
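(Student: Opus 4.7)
The plan is to prove the equivalence by a direct calculation centered on the walk
$\zeta := (x_0, y_0)\bullet \eta \bullet (y_{\ell'}, x_\ell) = (x_0, y_0, y_1, \ldots, y_{\ell'}, x_\ell)$, whose reduction $\overline{\zeta}$ is, by \cref{def:adjacency-reduced-walks}, required to equal $\xi$. The key structural observation used throughout is that because $\eta$ is a reduced walk, every internal triple $(y_{i-1}, y_i, y_{i+1})$ inside $\zeta$ is already irreducible; hence the only elementary reductions of $\zeta$ that can ever occur are at the boundary triples $(x_0, y_0, y_1)$ (when $\ell'\ge 1$), $(y_{\ell'-1}, y_{\ell'}, x_\ell)$ (when $\ell'\ge 1$), and $(x_0, y_0, x_\ell)$ (when $\ell' = 0$).

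For the implication $(\Leftarrow)$, in each case (A1)--(A5) the two edge conditions $x_0 y_0, x_\ell y_{\ell'} \in E(H)$ are immediate from the equalities prescribed by that case together with the fact that $\xi$ and $\eta$ are walks; and the identity $\xi = \overline{\zeta}$ follows by writing $\zeta$ out explicitly and performing the relevant boundary reductions (both ends in (A1), only the start in (A4), only the end in (A2), neither end in (A3), and the single central triple in (A5)) and then comparing term-by-term with $\xi$.

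For the implication $(\Rightarrow)$, the proof proceeds by splitting into subcases according to whether the two boundary-reduction conditions $x_0 = y_1$ and $y_{\ell'-1} = x_\ell$ are satisfied, with the degenerate $\ell' = 0$ situation handled separately. After performing each applicable boundary reduction, any triple newly created at that boundary has the form $(y_{i-1}, y_i, y_{i+1})$ with $i \in \{1, \ell'-1\}$ and is therefore already reduced, so no cascade occurs unless the two ends meet in the middle (which can happen only for $\ell' \in \{1,2\}$, in which case the remaining walk collapses further in a controlled way). Matching $\overline{\zeta}$ componentwise with $\xi = (x_0, \ldots, x_\ell)$ then identifies the case as (A3), (A4), (A2), (A1), or (A5) according to the pattern of boundary reductions performed, reading off the prescribed equalities between the $x_i$'s and $y_j$'s.

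The main bookkeeping obstacle lies in the small-$\ell'$ corner cases. For example, when $\ell' = 1$ and both boundary reductions apply, one obtains a configuration satisfying both (A2) and (A4); and when $\ell' = 2$ with both reductions applying, $\zeta$ collapses all the way to $(x_0)$, forcing $\ell = 0$ and placing the situation inside (A1). These overlaps are consistent with the statement being a disjunction rather than a partition, but they must be explicitly tracked to confirm that every possible reduction pattern is accounted for by at least one of the five cases and that no spurious case is produced by double-counting.
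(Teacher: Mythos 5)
Your proposal is correct and takes essentially the same approach as the paper: both directions are established by computing $\overline{(x_0,y_0)\bullet\eta\bullet(y_{\ell'},x_\ell)}$, relying on the fact that reducedness of $\eta$ confines all elementary reductions to the two boundary triples. The paper organizes the converse direction by explicit case analysis on $\ell'\in\{0,1,2,\geq 3\}$ with the boundary conditions as subcases, while you lead with the boundary-reduction pattern and treat small $\ell'$ as corner cases; this is the same argument with equivalent bookkeeping, and your explicit note on the (A2)/(A4) overlap at length one matches the paper's \cref{prop:adjacency-walk-cases}.
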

\begin{proof}
    If any of (A1)--(A5) is satisfied, then $\xi$ and $\eta$ are clearly adjacent.

    The converse can be proved by straightforward case analysis. Assume that $\xi=(x_0,\dots,x_\ell)$ and $\eta=(y_0,\dots, y_{\ell'})$ are adjacent. 
    Using $\xi=(x_0,y_0)\cdot \eta\cdot (y_{\ell'},x_\ell)$, we see the following.
    \begin{itemize}
        \item When $\ell'=0$, we have
    \begin{align*}
        \xi
        &=(x_0,y_0)\cdot (y_0)\cdot (y_0,x_\ell)\\
        &=
    \begin{cases}
        (x_0,y_0,x_\ell) &\text{if $x_0\neq x_\ell$;\quad($\implies$ (A3))}\\
        (x_0)            &\text{if $x_0= x_\ell$.\quad($\implies$ (A5))}
    \end{cases}
    \end{align*}
        \item When $\ell'=1$, we have
    \begin{align*}
        \xi
        &=(x_0,y_0)\cdot (y_0,y_1)\cdot (y_1,x_\ell)\\
        &=
        \begin{cases}
    (x_0,y_0,y_1,x_\ell) &\text{if $x_0\neq y_1$ and $y_0\neq x_\ell$;\quad($\implies$ (A3))}\\
    (x_0,x_{\ell})   &\text{if $x_0= y_1$ or $y_0= x_\ell$.\quad($\implies$ (A4) or (A2))}
    \end{cases}
    \end{align*}
        \item When $\ell'=2$, we have
    \begin{align*}
        \xi
        &=(x_0,y_0)\cdot (y_0,y_1,y_2)\cdot (y_2,x_\ell)\\
        &=
        \begin{cases}
    (x_0,y_0,y_1,y_2,x_{\ell}) &\text{if $x_0\neq y_1$ and $y_1\neq x_\ell$;\quad($\implies$ (A3))}\\
    (x_0,y_0,y_1)         &\text{if $x_0\neq y_1$ and $y_1=x_\ell$;\quad($\implies$ (A2))}\\
    (y_1,y_2,x_\ell)         &\text{if $x_0= y_1$  and  $y_1\neq x_\ell$;\quad($\implies$ (A4))}\\
    (y_1)               &\text{if $x_0=y_1=x_\ell$.\quad($\implies$ (A1))}
    \end{cases}
    \end{align*}
        \item When $\ell'\geq 3$, we have 
    \begin{align*}
        \xi
        &=(x_0,y_0)\cdot (y_0,y_1,\dots, y_{\ell'-1},y_{\ell'})\cdot (y_{\ell'},x_\ell)\\
        &=
        \begin{cases}
    (x_0,y_0,y_1,\dots,y_{\ell'-1},y_{\ell'}, x_{\ell}) &\text{if $x_0\neq y_1$ and $y_{\ell'-1}\neq x_\ell$;\quad($\implies$ (A3))}\\
    (x_0,y_0,y_1,\dots, y_{\ell'-2},y_{\ell'-1})          &\text{if $x_0\neq y_1$ and $y_{\ell'-1}=x_\ell$;\quad($\implies$ (A2))}\\
    (y_1,y_2,\dots, y_{\ell'-1},y_{\ell'},x_\ell)     &\text{if $x_0= y_1$  and  $y_{\ell'-1}\neq x_\ell$;\quad($\implies$ (A4))}\\
    (y_1,y_2,\dots, y_{\ell'-2},y_{\ell'-1})          &\text{if $x_0= y_1$ and $y_{\ell'-1}=x_\ell$.\quad($\implies$ (A1))}
    \end{cases}
    \end{align*}
    \end{itemize}
    In each case, at least one of (A1)--(A5) holds, as indicated above.
\end{proof}

\begin{corollary}\label{prop:adjacent-walk-parity}
    Let $\xi$ and $\eta$ be adjacent reduced walks in a graph $H$. Then the parities of $\len(\xi)$ and $\len(\eta)$ coincide (i.e., $\len(\xi)$ is even if and only if $\len(\eta)$ is). 
\end{corollary}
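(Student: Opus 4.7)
The plan is to extract the conclusion directly from the defining identity of adjacency, avoiding the explicit case analysis. By \cref{def:adjacency-reduced-walks}, two reduced walks $\xi$ and $\eta$ are adjacent precisely when $\xi = \bigl(s(\xi),s(\eta)\bigr)\cdot \eta\cdot \bigl(t(\eta),t(\xi)\bigr)$. The key observation is that the concatenation $\bigl(s(\xi),s(\eta)\bigr)\bullet \eta\bullet \bigl(t(\eta),t(\xi)\bigr)$ has length $1+\len(\eta)+1=\len(\eta)+2$, and each elementary reduction step (in the sense of \cref{subsec:walk}) decreases the length of a walk by exactly $2$. Since $\xi$ is obtained from this concatenation by repeatedly applying elementary reductions, we must have $\len(\xi)=\len(\eta)+2-2k$ for some $k\in\mathbb{N}$, whence $\len(\xi)\equiv\len(\eta)\pmod{2}$.

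A second, entirely mechanical proof is to just inspect the five cases (A1)--(A5) of \cref{prop:adjacency-cases}: in (A1) and (A3) the lengths differ by exactly $2$; in (A2) and (A4) they are equal and positive; in (A5) both lengths are $0$. Either way, there is no real obstacle to overcome here—the corollary is essentially a bookkeeping remark that parity is preserved by elementary reduction—and the writeup should consist of one short paragraph.
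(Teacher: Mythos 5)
Your proposal is correct, and you actually give two valid arguments. The second one — reading off the parities from cases (A1)--(A5) — is precisely what the paper intends, since it states the result as a corollary of \cref{prop:adjacency-cases} with no further proof. Your first argument is genuinely different and more economical: it works directly from \cref{def:adjacency-reduced-walks}, observing that $\xi$ is obtained from the length-$(\len(\eta)+2)$ concatenation $\bigl(s(\xi),s(\eta)\bigr)\bullet \eta\bullet \bigl(t(\eta),t(\xi)\bigr)$ by elementary reductions, each of which removes exactly two vertices, so $\len(\xi)\equiv\len(\eta)\pmod 2$. This bypasses \cref{prop:adjacency-cases} entirely, which is worth something since that proposition's proof is itself a somewhat lengthy case analysis. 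The only thing your first argument does not give you is the sharper quantitative information (that the lengths differ by exactly $0$ or $2$), which the paper does use later (e.g.\ in \cref{lem:Gh-len}); but for the parity statement as asked, your first argument is the cleaner route.
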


\if0 
{\color{lightgray}\begin{remark}
    Observations somewhat related to the definition of adjacency in \cref{def:adjacency-reduced-walks} and its characterization in \cref{prop:adjacency-cases} can be found in the proof of \cite[Theorem~6.1]{Wro20}. 
\end{remark}}
\fi 

\begin{remark}\label{rmk:adjacency-between-non-reduced-walks}
    Conditions (A1)--(A5) in \cref{prop:adjacency-cases} allow one to define an adjacency relation between not necessarily reduced walks. 
    It is not hard to see that if $\xi$ and $\eta$ are walks in a graph $H$ which are adjacent (in the sense that they satisfy one of the conditions (A1)--(A5)), then so are the reduced walks $\overline \xi$ and $\overline \eta$. Also, for any graph homomorphism $f\colon G\to H$ and any adjacent pair of walks $\omega$ and $\omega'$ in $G$, the walks $f(\omega)$ and $f(\omega')$ in $H$ are clearly adjacent. It follows that $f$ induces a graph homomorphism $\Pi f\colon \Pi G\to \Pi H$ mapping each reduced walk $\omega$ in $G$ to the reduced walk $(\Pi f)(\omega)=\overline{f(\omega)}$ in $H$.
\end{remark}

The cases (A1)--(A5) in \cref{prop:adjacency-cases} are not completely mutually exclusive. That is, it is possible for reduced walks $\xi$ and $\eta$ in a graph $H$ to satisfy both (A2) and (A4). (For example, let $\xi=(x,y)$ and $\eta=(y,x)$.) However, we have the following. 
\begin{proposition}\label{prop:adjacency-walk-cases}
    Let $\xi$ and $\eta$ be adjacent reduced walks in a graph $H$. Suppose that $\len(\xi)\neq 1$ or $\len(\eta)\neq 1$. Then precisely one of \textup{(A1)--(A5)} holds.
\end{proposition}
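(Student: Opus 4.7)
The plan is to extract the length constraint implicit in each of (A1)--(A5) and then run a short case analysis. Setting $\ell=\len(\xi)$ and $\ell'=\len(\eta)$, the five conditions force respectively $\ell'=\ell+2$, $\ell=\ell'>0$, $\ell=\ell'+2$, $\ell=\ell'>0$, and $\ell=\ell'=0$. Consequently, the pair $(\ell,\ell')$ already singles out a unique candidate among (A1)--(A5), except that when $\ell=\ell'>0$ both (A2) and (A4) are a priori possible.

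First I would dispatch the unambiguous cases. When $\ell\neq\ell'$, only one of (A1), (A3) is consistent with the given lengths, and \cref{prop:adjacency-cases} ensures that it does hold. When $\ell=\ell'=0$, only (A5) is consistent, and again it holds by \cref{prop:adjacency-cases}. The remaining case is $\ell=\ell'>0$, and here the hypothesis that $\len(\xi)\neq 1$ or $\len(\eta)\neq 1$, combined with $\ell=\ell'$, rules out $\ell=\ell'=1$, so I may assume $\ell=\ell'\geq 2$.

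The main obstacle, and the only point where the hypothesis is used, is to show that (A2) and (A4) cannot hold simultaneously once $\ell=\ell'\geq 2$. Suppose toward a contradiction that both do. Instantiating (A2) at $i=1$ gives $x_2=y_1$, while instantiating (A4) at $i=0$ gives $x_0=y_1$; hence $x_0=x_2$, contradicting the reducedness of $\xi$. The exclusion of $\ell=\ell'=1$ is sharp: for any edge $ab\in E(H)$, the reduced walks $\xi=(a,b)$ and $\eta=(b,a)$ satisfy both (A2) and (A4). Beyond these observations and the five explicit descriptions furnished by \cref{prop:adjacency-cases}, no further ingredients are required.
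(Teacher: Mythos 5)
Your proof is correct and follows essentially the same route as the paper's "direct proof": reduce by length constraints to the only ambiguous case $\ell=\ell'\geq 2$, and rule out (A2) and (A4) holding simultaneously via $x_0\neq x_2=y_1$. The paper states this more tersely but the substance is identical.
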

\begin{proof}
    This follows from an inspection of the proof of \cref{prop:adjacency-cases}.
    A direct proof is as follows. We may suppose $\len(\xi)=\len(\eta)=\ell\geq 2$ and that (A2) holds; we want to show that (A4) does not hold. This follows from $x_0\neq x_2=y_1$. 
\end{proof}
\cref{prop:adjacency-walk-cases} allows us to introduce the following notion, which will play a crucial role in \cref{sec:contractibility}.
\begin{definition}\label{def:type}
    Let $\xi$ and $\eta$ be adjacent reduced walks in a graph $H$ satisfying $\len(\xi)\neq 1$ or $\len(\eta)\neq 1$. For $i\in\{1,\dots,5\}$, we say that the \emph{type} of the ordered pair $(\xi,\eta)$ is (A$i$) if $\xi$ and $\eta$ satisfy (A$i$) of \cref{prop:adjacency-cases}.
\end{definition}

Note in particular that the type is well-defined for any ordered pair of adjacent reduced walks of even lengths.

\begin{proposition}\label{prop:composition-on-PiH-is-graph-hom}
    Let $\xi_1$, $\xi_2$, $\eta_1$, and $\eta_2$ be reduced walks in a graph $H$. Suppose that $\xi_1$ and $\eta_1$ are adjacent, $\xi_2$ and $\eta_2$ are adjacent, $t(\xi_1)=s(\xi_2)$, and $t(\eta_1)=s(\eta_2)$. Then the reduced walks $\xi_1\cdot \xi_2$ and $\eta_1\cdot \eta_2$ are adjacent. 
\end{proposition}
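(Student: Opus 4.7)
The plan is to verify the three defining conditions of adjacency from \cref{def:adjacency-reduced-walks} directly for the pair $(\xi_1\cdot\xi_2,\,\eta_1\cdot\eta_2)$. The first two conditions are essentially free: since the product of reduced walks preserves sources and targets, we have $s(\xi_1\cdot\xi_2)=s(\xi_1)$, $s(\eta_1\cdot\eta_2)=s(\eta_1)$, $t(\xi_1\cdot\xi_2)=t(\xi_2)$, and $t(\eta_1\cdot\eta_2)=t(\eta_2)$; and the adjacency hypotheses on $(\xi_1,\eta_1)$ and $(\xi_2,\eta_2)$ immediately give $s(\xi_1)s(\eta_1)\in E(H)$ and $t(\xi_2)t(\eta_2)\in E(H)$.

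The real content is the third condition, which amounts to a telescoping computation. The plan is to substitute the defining equations
\[
\xi_1=\bigl(s(\xi_1),s(\eta_1)\bigr)\cdot\eta_1\cdot\bigl(t(\eta_1),t(\xi_1)\bigr),\qquad
\xi_2=\bigl(s(\xi_2),s(\eta_2)\bigr)\cdot\eta_2\cdot\bigl(t(\eta_2),t(\xi_2)\bigr)
\]
into the product $\xi_1\cdot\xi_2$, then regroup using associativity of the product $\cdot$ on reduced walks (which is standard — it follows from the fact that reduction is a well-defined, confluent process on walks). Using the hypotheses $t(\xi_1)=s(\xi_2)$ and $t(\eta_1)=s(\eta_2)$, the middle factor to focus on is
\[
\bigl(t(\eta_1),t(\xi_1)\bigr)\cdot\bigl(s(\xi_2),s(\eta_2)\bigr)=\bigl(t(\eta_1),t(\xi_1)\bigr)\cdot\bigl(t(\xi_1),t(\eta_1)\bigr).
\]

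The key observation is that this middle factor collapses to the trivial walk $\bigl(t(\eta_1)\bigr)$: the concatenation is the length-two walk $\bigl(t(\eta_1),t(\xi_1),t(\eta_1)\bigr)$, which is not reduced (note $t(\xi_1)\neq t(\eta_1)$ because they are adjacent in $H$, and $H$ has no loops), and a single elementary reduction yields $\bigl(t(\eta_1)\bigr)$. Consequently,
\[
\xi_1\cdot\xi_2=\bigl(s(\xi_1),s(\eta_1)\bigr)\cdot\eta_1\cdot\eta_2\cdot\bigl(t(\eta_2),t(\xi_2)\bigr)=\bigl(s(\xi_1),s(\eta_1)\bigr)\cdot(\eta_1\cdot\eta_2)\cdot\bigl(t(\eta_2),t(\xi_2)\bigr),
\]
which, after rewriting the endpoints via the source/target identities above, is exactly the adjacency equation for $(\xi_1\cdot\xi_2,\,\eta_1\cdot\eta_2)$.

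There is no real obstacle here; the only thing to be careful about is the bookkeeping of which walks are reduced and the invocation of associativity of $\cdot$. One could also remark that this result has the pleasant interpretation that the endpoint map $(s,t)\colon V(\Pi H)\to V(H)\times V(H)$ and the partial operation of reduced-walk multiplication together make $\Pi H$ into a graph object in a natural category of (partial) groupoids over $V(H)$, but for the purpose of the present proof it suffices to execute the telescoping computation above.
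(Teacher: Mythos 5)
Your proof is correct and follows essentially the same route as the paper's: verify the endpoint-adjacency conditions directly, substitute the defining equations for $\xi_1$ and $\eta_1$ into $\xi_1\cdot\xi_2$, and cancel the inverse middle pair. The paper performs the telescoping in a single display, implicitly using associativity and the cancellation of $\bigl(t(\eta_1),t(\xi_1)\bigr)\cdot\bigl(t(\xi_1),t(\eta_1)\bigr)$, which you spell out explicitly.
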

\begin{proof}
    We have $s(\xi_1\cdot \xi_2)=s(\xi_1)$ and $s(\eta_1\cdot \eta_2)=s(\eta_1)$, and these are adjacent in $H$ because $\xi_1$ and $\eta_1$ are adjacent. Similarly, $t(\xi_1\cdot \xi_2)$ and $t(\eta_1\cdot \eta_2)$ are adjacent in $H$. Now observe 
    \begin{align*}
        \xi_1\cdot \xi_2
        &=\bigl(s(\xi_1),s(\eta_1)\bigr)\cdot \eta_1\cdot \bigl(t(\eta_1),t(\xi_1)\bigr)\cdot\bigl(s(\xi_2),s(\eta_2)\bigr)\cdot \eta_2\cdot \bigl(t(\eta_2),t(\xi_2)\bigr)\\
        &=\bigl(s(\xi_1),s(\eta_1)\bigr)\cdot \eta_1\cdot \eta_2\cdot \bigl(t(\eta_2),t(\xi_2)\bigr).\qedhere
    \end{align*}
\end{proof}

We introduce a few natural graph homomorphisms associated with $\Pi H$.

\begin{definition}\label{def:s-t-i}
    For any graph $H$, we have the following graph homomorphisms.
    \begin{itemize}
        \item Two graph homomorphisms $s,t\colon \Pi H\to H$ mapping $\xi=(x_0,\dots,x_\ell)\in V(\Pi H)$ to $s(\xi)=x_0$ and $t(\xi)=x_\ell$, respectively.
        \item A graph homomorphism $e\colon H\to \Pi H$ mapping $x\in V(H)$ to $e(x)=(x)\in V(\Pi H)$, a walk of length $0$.
    \end{itemize}
    For any graph $G$ and any graph homomorphism $f\colon G\to H$, we denote the composite graph homomorphism $ef\colon G\to H\to \Pi H$ by $\id_f\colon G\to \Pi H$.
\end{definition}

The graph $\Pi H$ may be regarded as a combinatorial analogue of the \emph{path space} $X^{[0,1]}$ of a topological space $X$. The graph homomorphisms $s,t\colon \Pi H\to H$ then correspond to the continuous maps $\mathrm{ev}_0,\mathrm{ev}_1\colon X^{[0,1]}\to X$ given by evaluation at the endpoints $0$ or $1$. 
In particular, using the graph $\Pi H$ and the graph homomorphisms $s,t\colon \Pi H\to H$, we can define the notion of homotopy between graph homomorphisms as follows. 
\begin{definition}\label{def:homotopy}
    Let $G$ and $H$ be graphs and $f,g\colon G\to H$ graph homomorphisms. A \emph{homotopy from $f$ to $g$} is a graph homomorphism $h\colon G\to \Pi H$ satisfying $sh=f$ and $th=g$.
\end{definition}
This notion will be used in \cref{subsec:relation-to-Wrochna}. Note in particular that $\id_f\colon G\to \Pi H$, which is a homotopy from $f$ to itself, can be regarded as the identity homotopy on $f$.

\begin{remark}\label{rmk:PiH-as-internal-groupoid}
    This digression is intended for readers familiar with the notion of internal category \cite[Section~XII.1]{MacLane-CWM} in a category with pullbacks. 
    For any graph $H$, 
    let $\Pi H\times_H\Pi H$ be the following pullback in the category of graphs and graph homomorphisms (in the sense of \cref{subsec:graphs,subsec:graph-hom}):
\[
\begin{tikzpicture}[baseline=-\the\dimexpr\fontdimen22\textfont2\relax ]
      \node(0) at (0,1) {$\Pi H\times_H\Pi H$};
      \node(1) at (2.5,1) {$\Pi H$};
      \node(2) at (0,-1) {$\Pi H$};
      \node(3) at (2.5,-1) {$H$.};
      \draw [->] (0) to node[auto,labelsize] {} (1);
      \draw [<-] (2) to node[auto,labelsize] {} (0);
      \draw [->] (1) to node[auto,labelsize] {$s$} (3);
      \draw [->] (2) to node[auto,labelsize] {$t$} (3);
      \draw (0.3,0.4) to (0.6,0.4) to (0.6,0.7);
\end{tikzpicture}
\]
    Explicitly, $\Pi H\times_H\Pi H$ is the induced subgraph of $\Pi H\times \Pi H$ determined by
    \[
    V(\Pi H\times_H\Pi H)=\bigl\{\,(\xi_1,\xi_2)\in V(\Pi H)\times V(\Pi H) \mid t(\xi_1)=s(\xi_2)\,\bigr\}.
    \]
    \cref{prop:composition-on-PiH-is-graph-hom} implies that we have a graph homomorphism $m\colon \Pi H\times_H\Pi H\to \Pi H$ mapping each $(\xi_1,\xi_2)\in V(\Pi H\times_H\Pi H)$ to $\xi_1\cdot \xi_2\in V(\Pi H)$. The tuple $(s,t,e,m)$ of graph homomorphisms defines an internal category (in fact, internal groupoid) structure on $\Pi H$ in the category of graphs.
\end{remark}

The two graph homomorphisms $s,t\colon \Pi H\to H$ induce a graph homomorphism $\langle s,t\rangle\colon \Pi H\to H\times H$
mapping each $\xi\in V(\Pi H)$ to $\bigl(s(\xi),t(\xi)\bigr)\in V(H\times H)$, by the universal property of the (categorical) product. 

\begin{proposition}
\label{prop:st-covering}
    Let $H$ be a graph. 
    Then the graph homomorphism $\langle s,t\rangle\colon \Pi H\to H\times H$ is a covering map of graphs.
\end{proposition}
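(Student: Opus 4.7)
The plan is to verify the defining property of a covering map of graphs directly: for every reduced walk $\xi=(x_0,x_1,\ldots,x_\ell)\in V(\Pi H)$ and every vertex $(x,y)\in V(H\times H)$ adjacent to $\langle s,t\rangle(\xi)=(x_0,x_\ell)$ in $H\times H$, I must exhibit a unique reduced walk $\eta$ adjacent to $\xi$ in $\Pi H$ with $s(\eta)=x$ and $t(\eta)=y$. The hypothesis translates to $x_0 x\in E(H)$ and $x_\ell y\in E(H)$, so $(x,x_0)$ and $(x_\ell,y)$ are legitimate reduced walks of length~$1$ in $H$. By the definition of adjacency in $\Pi H$, the task reduces to solving
\[
\xi=(x_0,x)\cdot \eta\cdot (y,x_\ell)
\]
for such an $\eta$.

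For existence, I would simply set $\eta:=(x,x_0)\cdot \xi\cdot (x_\ell,y)$. Since the source (resp.\ target) of a product of reduced walks coincides with the source of its leftmost factor (resp.\ target of its rightmost factor) — elementary reduction being unable to alter the first or last vertex — this immediately yields $s(\eta)=x$ and $t(\eta)=y$. The required adjacency equation then follows from associativity of the product of reduced walks together with the identities $(x_0,x)\cdot (x,x_0)=(x_0)$ and $(x_\ell,y)\cdot (y,x_\ell)=(x_\ell)$, each coming from a single elementary reduction. (Associativity itself is precisely the content of the internal groupoid structure on $\Pi H$ noted in \cref{rmk:PiH-as-internal-groupoid}.) For uniqueness, if $\eta'$ is any reduced walk adjacent to $\xi$ with source $x$ and target $y$, then left-multiplying $\xi=(x_0,x)\cdot \eta'\cdot (y,x_\ell)$ by $(x,x_0)$ and right-multiplying by $(x_\ell,y)$ recovers $\eta'=(x,x_0)\cdot \xi\cdot (x_\ell,y)=\eta$.

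I do not anticipate any substantial obstacle: the argument reduces to routine manipulation in the groupoid of reduced walks, making no use of case analysis on the adjacency types (A1)--(A5) of \cref{prop:adjacency-cases} nor of the square-freeness of $H$. The essential point is that each length-$1$ reduced walk $(a,b)$ has $(b,a)$ as a two-sided inverse in this groupoid, which makes the adjacency equation explicitly solvable and uniquely so.
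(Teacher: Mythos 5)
Your proof is correct and follows essentially the same approach as the paper's: both identify $(x,s(\xi))\cdot\xi\cdot(t(\xi),y)$ as the unique reduced walk adjacent to $\xi$ with source $x$ and target $y$, which amounts to exhibiting the inverse of $\langle s,t\rangle|_{N_{\Pi H}(\xi)}$. The only difference is that the paper states the inverse formula without further comment, whereas you spell out the verification via associativity and cancellation of length-$1$ walks in the groupoid of reduced walks.
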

\begin{proof}
    Given any reduced walk $\xi\in V(\Pi H)$, the function 
    \[
    \langle s,t\rangle|_{N_{\Pi H}(\xi)}\colon N_{\Pi H}(\xi)\to N_{H\times H}\bigl(\bigl(s(\xi),t(\xi)\bigr)\bigr)=N_H\bigl(s(\xi)\bigr)\times N_H\bigl(t(\xi)\bigr)
    \]
    is a bijection, its inverse 
    \[
    \bigl(\langle s,t\rangle|_{N_{\Pi H}(\xi)}\bigr)^{-1}\colon N_H\bigl(s(\xi)\bigr)\times N_H\bigl(t(\xi)\bigr)\to  N_{\Pi H}(\xi)
    \]
    being given by mapping $(x,y)\in N_H\bigl(s(\xi)\bigr)\times N_H\bigl(t(\xi)\bigr)$ to 
    \[
    \bigl(\langle s,t\rangle|_{N_{\Pi H}(\xi)}\bigr)^{-1}\bigl((x,y)\bigr)= \bigl(x,s(\xi)\bigr)\cdot \xi\cdot \bigl(t(\xi),y\bigr).\qedhere
    \]
\end{proof}

\subsection{The definition of the universal cover}\label{subsec:def-of-pf}
Let $G$ and $H$ be graphs and let $f\colon G\to H$ be a graph homomorphism.
In this subsection,  we construct a certain monotone map $p_f\colon E_f\to B_f$, where $B_f$ is the connected component $\Conn\bigl(\Hom(G,H),f\bigr)$ of $\Hom(G,H)$ containing $f$. 
When the graphs $G$ and $H$ satisfy the assumption of \cref{thm:main}, $p_f$ will be the universal cover of $B_f$. 

Our definition of $E_f$ uses the graph $\Pi H$. 
The graph homomorphism $t\colon \Pi H\to H$ induces a monotone map $\Hom(G,t)\colon \Hom(G,\Pi H)\to \Hom(G,H)$ mapping each $\varphi\in \Hom(G,\Pi H)$ to $t\varphi\in \Hom(G,H)$. 
We denote by $\Hom(G,\Pi H)_f$ the subposet of $\Hom(G,\Pi H)$ consisting of all set-valued homomorphisms $\varphi\colon G\pto\Pi H$ such that $s\varphi=f$.
Recall from \cref{def:s-t-i} that we have a graph homomorphism $\id_f\colon G\to \Pi H$ mapping each $u\in V(G)$ to the walk $\bigl(f(u)\bigr)\in V(\Pi H)$ of length $0$. We clearly have $\id_f\in \Hom(G,\Pi H)_f$. Since the monotone map $\Hom(G,t)\colon \Hom(G,\Pi H)\to \Hom(G,H)$ maps $\id_f$ to $f$, it restricts to 
\[
\Hom(G,t)|_{\Conn(\Hom(G,\Pi H)_f,\id_f)}\colon \Conn\bigl(\Hom(G,\Pi H)_f,\id_f\bigr)\to \Conn\bigl(\Hom(G,H),f\bigr)=B_f.
\]
We define $E_f=\Conn\bigl(\Hom(G,\Pi H)_f,\id_f\bigr)$ and $p_f=\Hom(G,t)|_{E_f}\colon E_f\to B_f$.
See \cref{apx:comparing} for another definition of $E_f$ based on universal covers of graphs. 

As mentioned above, if $G$ and $H$ satisfy the assumption of \cref{thm:main}, then $p_f\colon E_f\to B_f$ is the universal cover of $B_f$ by \cref{thm:cover,thm:contractible}.
We will give a more explicit description of $E_f$ in \cref{prop:univ-cover-explicitly}.
In the proof of \cref{thm:Lambda-classification}, we show that $E_f$ is closely related to the notion of \emph{realizable walks} in \cite{Wro20}.

\section{The map $p_f$ is a covering map} \label{sec:covering}
The aim of this section is to show the following.

\begin{theorem}\label{thm:cover}
    Let $G$ and $H$ be graphs satisfying the assumption of \cref{thm:main} and $f\colon G\to H$ a graph homomorphism. Then the map $p_f\colon E_f\to B_f$ defined in \cref{subsec:def-of-pf} is a covering map.
\end{theorem}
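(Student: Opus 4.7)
The plan is to verify that the monotone map $p_f$ is a \emph{covering of posets}: for every $\tilde\varphi\in E_f$, $p_f$ restricts to order isomorphisms between $\{\tilde\psi\in E_f\mid \tilde\psi\geq \tilde\varphi\}$ and $\{\psi\in B_f\mid \psi\geq p_f(\tilde\varphi)\}$, and similarly between the principal ideals at $\tilde\varphi$ and $p_f(\tilde\varphi)$. This is the standard poset-topological criterion ensuring that the induced continuous map on classifying spaces is a covering, since the open star of $\tilde\varphi$ in $\gr{E_f}$ then maps homeomorphically onto the open star of $p_f(\tilde\varphi)$ in $\gr{B_f}$. Observe that any $\tilde\psi$ comparable to a given $\tilde\varphi\in E_f$ automatically lies in $E_f$, so the lifting problem takes place inside $\Hom(G,\Pi H)_f$ without further connectivity constraints.

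For the downward direction, given $\psi\leq p_f(\tilde\varphi)$ in $B_f$, we take $\tilde\psi(u):=\{\xi\in \tilde\varphi(u)\mid t(\xi)\in \psi(u)\}$. The key step is the injectivity of $t\colon \tilde\varphi(u)\to V(H)$ for each $u\in V(G)$: picking any neighbor $v$ of $u$ in $G$ (which exists because $G$ is connected with at least two vertices) and any $\eta\in \tilde\varphi(v)$, every element of $\tilde\varphi(u)$ is adjacent to $\eta$ in $\Pi H$, and by \cref{prop:st-covering} adjacent walks are determined by their source--target pair. Since all elements of $\tilde\varphi(u)$ share source $f(u)$, they are distinguished by $t$, which gives both the set-valued-hom property of $\tilde\psi$ and the uniqueness of the downward lift.

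For the upward direction, given $\psi\geq p_f(\tilde\varphi)$, we construct the lift pointwise: for each $u\in V(G)$ and each $y\in\psi(u)$ we produce a single reduced walk $\xi_{u,y}$ from $f(u)$ to $y$. If $y\in t\tilde\varphi(u)$, take the unique element of $\tilde\varphi(u)$ with target $y$. Otherwise, choose any $\xi^{\ast}\in \tilde\varphi(u)$, any neighbor $v$ of $u$, and any $\eta\in \tilde\varphi(v)$; adjacency of $\xi^{\ast}$ and $\eta$ in $\Pi H$ yields $z:=t(\eta)\in N_H(t(\xi^{\ast}))$, and the set-valued-hom property of $\psi$ gives $z\in N_H(y)$. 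Since $y\ne t(\xi^{\ast})$, square-freeness of $H$ in the form~\cref{eqn:sq-free} forces $N_H(y)\cap N_H(t(\xi^{\ast}))=\{z\}$, making $z$ independent of the auxiliary choices $v$, $\eta$. We then set $\xi_{u,y}$ to be the unique walk in $\Pi H$ adjacent to $\eta$ with source $f(u)$ and target $y$, provided by \cref{prop:st-covering}, and define $\tilde\psi(u):=\{\xi_{u,y}\mid y\in\psi(u)\}$.

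The main obstacle will be verifying that $\tilde\psi$ so defined is indeed a set-valued homomorphism $G\pto \Pi H$, i.e.\ that $\xi_{u,y}$ and $\xi_{v,z}$ are adjacent in $\Pi H$ for every $uv\in E(G)$, $y\in\psi(u)$, $z\in\psi(v)$. The approach will be to fix a common reference pair $\xi^{\ast}\in \tilde\varphi(u)$, $\eta^{\ast}\in \tilde\varphi(v)$ (whose adjacency is given by $\tilde\varphi$ being a set-valued hom), express $\xi_{u,y}$ and $\xi_{v,z}$ using the explicit formula underlying \cref{prop:st-covering}, and verify the defining identity of \cref{def:adjacency-reduced-walks} via a case analysis on the type (A1)--(A5) of \cref{prop:adjacency-cases} associated to the reference pair, repeatedly invoking~\cref{eqn:sq-free} to pin down intermediate vertices. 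Once this is achieved, $s\tilde\psi=f$ and $t\tilde\psi=\psi$ hold by construction, and uniqueness of $\tilde\psi$ follows again from the injectivity of $t$ on each $\tilde\psi(u)$, completing both lifting properties and hence the theorem.
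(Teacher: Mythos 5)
Your overall strategy---showing that $p_f$ is a covering map of posets by lifting principal filters and principal ideals---matches the paper's (which phrases the same thing as a discrete fibration and opfibration, then cites Barmak--Minian). Your downward lift, including the injectivity of $t$ on each $\tilde\varphi(u)$ via $\langle s,t\rangle$ and \cref{prop:st-covering}, is essentially Proposition~4.4 of the paper, and your observation that comparability stays inside $E_f$ is the standard reduction.

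The gap is in the existence half of the upward lift, at precisely the spot you flag as ``the main obstacle.'' You propose to verify that $\xi_{u,y}$ and $\xi_{v,z}$ are adjacent by brute case analysis on the type (A1)--(A5) of a reference pair $(\xi^\ast,\eta^\ast)$, ``repeatedly invoking \cref{eqn:sq-free} to pin down intermediate vertices,'' but that case analysis does not close without a structural lemma you never state. Concretely, writing everything in terms of the reference pair one finds, say when $z\neq t(\eta^\ast)$, that the required identity $(t(\eta^\ast),t(\xi^\ast),z)\cdot(z,y)=(t(\eta^\ast),y)$ forces $y=t(\xi^\ast)$; symmetrically, when $y\neq t(\xi^\ast)$ one needs $z=t(\eta^\ast)$. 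In other words, the verification hinges on the fact that adjacent vertices $u,v$ cannot \emph{both} have $y\notin t\tilde\varphi(u)$ and $z\notin t\tilde\varphi(v)$, i.e.\ that when $\psi(u)$ is not a singleton, $\psi$ is forced to take a common singleton value on all of $N_G(u)$. This is exactly the paper's \cref{lem:sq-free-singleton} (the ``monochromatic neighborhood'' observation from~\cite{Wro20}). The paper isolates that lemma up front, which collapses the whole adjacency check to a one-line cancellation of reduced walks, whereas your sketch gestures at square-freeness pointwise but does not extract the lemma, so as written the argument stalls in the mixed case. Once you state and use \cref{lem:sq-free-singleton}, your construction of $\xi_{u,y}$ (via a single neighbor $\eta$ and \cref{prop:st-covering}) becomes equal to the paper's $\eta_u\cdot(y_u,z_u,x)$, and the proof goes through.
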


The above statement follows our convention (see \cref{subsec:poset-topology}) to identify a poset $P$ and its classifying space $\gr{P}$. So it really says that the continuous map $\gr{p_f}\colon \gr{E_f}\to \gr{B_f}$ is a covering map between topological spaces (see e.g.~\cite[Section~1.3]{Hat02} for the definition). However, as we will see in \cref{subsec:cover-poset-topology}, it can also be regarded as a statement about the monotone map $p_f\colon E_f\to B_f$ between posets; and it is this latter combinatorial statement that we actually prove.

\subsection{Results from poset topology}\label{subsec:cover-poset-topology}
We begin with recalling relevant notions and a result from poset topology.
\begin{definition}[Cf.~{\cite[Definition~2.5]{Barmak-Minian-covering}}]
    Let $P$ and $Q$ be posets and $\alpha\colon P\to Q$ a monotone map. 
    \begin{itemize}
        \item $\alpha$ is a \emph{discrete fibration} if, for each $p\in P$ and each $q\in Q$ with $q\leq \alpha (p)$, there exists a unique $p'\in P$ with $p'\leq p$ and $\alpha(p')=q$.
        \item $\alpha$ is a \emph{discrete opfibration} if, for each $p\in P$ and each $q\in Q$ with $\alpha(p)\leq q$, there exists a unique $p'\in P$ with $p\leq p'$ and $\alpha(p')=q$.
        \item $\alpha$ is a \emph{covering map of posets} if it is both a discrete fibration and a discrete opfibration.\qedhere
    \end{itemize}
\end{definition} 

\begin{proposition}[{\cite[Proposition~2.9 and Theorem~3.2]{Barmak-Minian-covering}}]
    Let $P$ and $Q$ be posets and $\alpha\colon P\to Q$ a covering map of posets. Then $\gr{\alpha}\colon \gr{P}\to \gr{Q}$ is a covering map between topological spaces.
\end{proposition}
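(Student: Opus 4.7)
The plan is to exhibit an explicit evenly covered open cover of $\gr{Q}$. For each $q \in Q$, let $U_q \subseteq \gr{Q}$ denote the open star of the vertex corresponding to $q$, that is, the union of the open simplices of $\gr{Q}$ whose associated chain in $Q$ contains $q$. Each $U_q$ is open in $\gr{Q}$, and the family $\{U_q\}_{q\in Q}$ is an open cover because every point of $\gr{Q}$ lies in the interior of a unique open simplex, determined by some nonempty chain in $Q$. The proposition then reduces to showing, for each $q \in Q$,
\[
\gr{\alpha}^{-1}(U_q) \;=\; \bigsqcup_{p\in\alpha^{-1}(q)} U_p,
\]
a disjoint union of open sets, each mapped homeomorphically onto $U_q$ by $\gr{\alpha}$.

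For the homeomorphism, I would first unpack the two covering-map axioms combinatorially. The discrete fibration property restricts $\alpha$ to a monotone bijection $P_{<p} \to Q_{<q}$; iterated application of the uniqueness clause forces the inverse to be monotone as well, yielding a poset isomorphism. Symmetrically, the discrete opfibration property produces a poset isomorphism $P_{>p} \to Q_{>q}$. Since every chain through $p$ in $P$ decomposes uniquely as a chain below $p$, the singleton $\{p\}$, and a chain above $p$ (analogously for chains through $q$), the two isomorphisms assemble into a simplicial isomorphism between the subcomplexes of $\Delta(P)$ and $\Delta(Q)$ spanned by chains through $p$ and $q$ respectively. Passing to interiors yields the homeomorphism $U_p \to U_q$ induced by $\gr{\alpha}$.

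The subtle step is disjointness. I would first observe that distinct elements $p_1, p_2 \in \alpha^{-1}(q)$ are necessarily incomparable in $P$: if $p_1 \leq p_2$, then both $p_1$ and $p_2$ serve as lifts of $q$ under $p_2$ in the sense of discrete fibration (using $q = \alpha(p_2) \leq \alpha(p_2)$), and uniqueness forces $p_1 = p_2$. Consequently no chain contains both, so $U_{p_1} \cap U_{p_2} = \emptyset$. The inclusion $\bigcup_p U_p \subseteq \gr{\alpha}^{-1}(U_q)$ is immediate. For the reverse inclusion, any point in $\gr{\alpha}^{-1}(U_q)$ lies in the interior of a simplex corresponding to a chain $C_P$ of $P$ whose image $\alpha(C_P)$ is a chain of $Q$ containing $q$; a single use of discrete fibration inside $C_P$ locates the unique $p \in C_P$ with $\alpha(p) = q$, placing the point in $U_p$.

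The main obstacle I anticipate is the auxiliary bookkeeping around the claim that $\alpha$ is injective on each chain of $P$, needed so that $\gr{\alpha}$ restricts to a bijection on each open simplex. This is precisely where both halves of the covering-map axiom pull their weight: if $p_1 < p_2$ with $\alpha(p_1) = \alpha(p_2)$, the uniqueness clause in the discrete fibration applied at $p_2$ collapses them. Once this injectivity on chains is nailed down, the three ingredients above combine to show that each $U_q$ is evenly covered by $\gr{\alpha}$, completing the verification that $\gr{\alpha}$ is a covering map of topological spaces.
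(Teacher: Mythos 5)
Your argument is correct, and it proves the cited statement directly, which the paper itself does not: the paper merely invokes \cite[Proposition~2.9 and Theorem~3.2]{Barmak-Minian-covering}. The Barmak--Minian route is more abstract: they equip the posets with the Alexandrov (``A-space'') topology, show that a discrete bifibration is a covering in that topology, and then transfer the statement to the order complexes via the McCord weak equivalence between an A-space and the realization of its order complex. Your approach instead verifies the covering-space condition directly on the order complexes by exhibiting open stars as evenly-covered neighborhoods; this is more elementary and self-contained, at the cost of not producing the Galois-type correspondence that the A-space picture gives for free.

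All the key steps hold up. Discrete (op)fibration plus the uniqueness argument you give does produce poset isomorphisms $P_{\leq p}\cong Q_{\leq q}$ and $P_{\geq p}\cong Q_{\geq q}$, and these assemble to a simplicial isomorphism between closed stars $\overline{\mathrm{St}}(p)$ and $\overline{\mathrm{St}}(q)$; restricting to open stars gives the sheeted homeomorphism. Your incomparability observation for distinct lifts of $q$ (and, equivalently, injectivity of $\alpha$ on chains) is exactly what is needed both for disjointness of the sheets and for the per-simplex bijectivity. Two small cosmetic remarks. When you ``pass to interiors,'' it is worth saying explicitly that the simplicial isomorphism of closed stars carries the open star $U_p$ onto $U_q$ \emph{precisely} (not into), which follows because a chain in $\overline{\mathrm{St}}(p)$ contains $p$ iff its image contains $q$ (by injectivity on $C\cup\{p\}$). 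And in the reverse-inclusion step of the preimage identification, the existence of $p_0\in C_P$ with $\alpha(p_0)=q$ is immediate from $q\in\alpha(C_P)$; the discrete fibration is not actually invoked there, only its consequence (injectivity on chains) if you additionally want uniqueness of $p_0$, which the argument does not require.
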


Notice that for any covering map $\alpha\colon P\to Q$ of posets and any $p\in P$, $\alpha|_{\Conn(P,p)}\colon \Conn(P,p)\to \Conn\bigl(Q,\alpha(p)\bigr)$ is also a covering map of posets.

\subsection{The proof of \cref{thm:cover}}
By \cref{subsec:cover-poset-topology}, it suffices to show that the monotone map $\Hom(G,t)|_{\Hom(G,\Pi H)_f}\colon \Hom(G,\Pi H)_f\to \Hom(G,H)$ is a covering map of posets.

We begin with the following partial result.
\begin{proposition}
\label{prop:HomGt-discrete-fibration}
    Let $G$ be a graph without isolated vertices, $H$ a graph, and $f\colon G\to H$ a graph homomorphism.
    Then the monotone map 
    \[
    \Hom(G,t)|_{\Hom(G,\Pi H)_f}\colon \Hom(G,\Pi H)_f\to \Hom (G,H)
    \]
    is a discrete fibration and satisfies the ``uniqueness part'' of the definition of discrete opfibration: for any $\varphi\in \Hom(G,\Pi H)_f$ and any $\psi\in \Hom(G,H)$ with $t\varphi\leq \psi$, there exists at most one $\varphi'\in \Hom(G,\Pi H)_f$ satisfying $\varphi\leq \varphi'$ and $t\varphi'=\psi$. 
\end{proposition}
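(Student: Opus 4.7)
The plan is to first establish a key injectivity property, then use it separately to handle the discrete fibration part and the uniqueness-for-opfibration part.

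\textbf{Step 1 (key injectivity lemma).} I will observe that for any $\varphi\in\Hom(G,\Pi H)_f$ and any $u\in V(G)$ that is not isolated (which, under the hypothesis, is every $u$), the target map $t|_{\varphi(u)}\colon\varphi(u)\to V(H)$ is injective. Fix a neighbor $v$ of $u$ and any $\eta\in\varphi(v)$ (nonempty by the definition of set-valued homomorphism). If $\xi_1,\xi_2\in\varphi(u)$ satisfy $t(\xi_1)=t(\xi_2)$, then $\xi_i$ and $\eta$ are adjacent in $\Pi H$ for $i=1,2$, so by the defining formula
\[
\xi_i=\bigl(s(\xi_i),s(\eta)\bigr)\cdot\eta\cdot\bigl(t(\eta),t(\xi_i)\bigr).
\]
Since $s(\xi_1)=f(u)=s(\xi_2)$ (because $s\varphi=f$) and the targets agree, the two right-hand sides coincide, giving $\xi_1=\xi_2$.

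\textbf{Step 2 (discrete fibration).} Given $\varphi\in\Hom(G,\Pi H)_f$ and $\psi\in\Hom(G,H)$ with $\psi\le t\varphi$, I will define a candidate lift by
\[
\varphi'(u)=\bigl\{\,\xi\in\varphi(u)\mid t(\xi)\in\psi(u)\,\bigr\}.
\]
Each $\varphi'(u)$ is nonempty because $\emptyset\ne\psi(u)\subseteq t\bigl(\varphi(u)\bigr)$. The set-valued homomorphism condition is inherited from $\varphi$ (if $uv\in E(G)$, $\xi\in\varphi'(u)$, $\eta\in\varphi'(v)$, then $\xi,\eta$ were already adjacent in $\Pi H$), and $s\varphi'=f$ is inherited as well, so $\varphi'\in\Hom(G,\Pi H)_f$. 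By construction $t\varphi'\le\psi$; the reverse inclusion holds because for each $y\in\psi(u)\subseteq t(\varphi(u))$ there exists $\xi\in\varphi(u)$ with $t(\xi)=y$, and such $\xi$ lies in $\varphi'(u)$. Uniqueness is immediate from Step~1: any other lift $\varphi''\le\varphi$ with $t\varphi''=\psi$ must satisfy $\varphi''(u)\subseteq\varphi'(u)$ by the inequality $t\varphi''\le\psi$, and equality follows because $t|_{\varphi(u)}$ is injective while both subsets hit every element of $\psi(u)$.

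\textbf{Step 3 (opfibration uniqueness).} Suppose $\varphi',\varphi''\in\Hom(G,\Pi H)_f$ both satisfy $\varphi\le\varphi',\varphi''$ and $t\varphi'=t\varphi''=\psi$. For each $u\in V(G)$ and each $y\in\psi(u)$, Step~1 (applied to $\varphi'$ and $\varphi''$) yields a unique $\xi'\in\varphi'(u)$ and a unique $\xi''\in\varphi''(u)$ with $t(\xi')=y=t(\xi'')$. Picking any neighbor $v$ of $u$ and any $\eta\in\varphi(v)\subseteq\varphi'(v)\cap\varphi''(v)$, the adjacency formula forces both $\xi'$ and $\xi''$ to equal $(f(u),s(\eta))\cdot\eta\cdot(t(\eta),y)$, hence $\xi'=\xi''$. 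Therefore $\varphi'(u)=\varphi''(u)$ for all $u$.

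The only non-routine part is Step~1, and even there the argument is short once one notes that the starts are pinned down by the hypothesis $s\varphi=f$ and the targets by assumption; the rigidity of adjacency in $\Pi H$ (two adjacent reduced walks determine each other once sources and targets are fixed) does the rest. Steps~2 and 3 then reduce to bookkeeping on top of this rigidity.
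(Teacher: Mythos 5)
Your proof is correct and follows essentially the same route as the paper's: the key step in both is the injectivity of $t|_{\varphi(u)}$, which the paper obtains by citing \cref{prop:st-covering} (the covering-map property of $\langle s,t\rangle$) while you derive it directly from the adjacency formula of \cref{def:adjacency-reduced-walks} together with $s\varphi=f$; the lift $\varphi'(u)=\{\xi\in\varphi(u)\mid t(\xi)\in\psi(u)\}$ is exactly the paper's $(t|_{\varphi(u)})^{-1}(\psi(u))$, and the opfibration-uniqueness argument via the rigidity of adjacency is the same. No gaps.
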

\begin{proof}
    First we observe that for each $\varphi\in\Hom(G,\Pi H)_f$ and each $u\in V(G)$, the function $t|_{\varphi(u)}\colon \varphi(u)\to t\varphi(u)$ is bijective. It is surjective since $t\varphi(u)$ is defined to be the image $t\bigl(\varphi(u)\bigr)\subseteq V(H)$.
    To see its injectivity, let $v\in V(G)$ be a vertex adjacent to $u$ and take any $\eta\in \varphi(v)$. Then any $\xi\in \varphi(u)$ satisfies $s(\xi)=f(u)$ (by the definition of $\Hom(G,\Pi H)_f$) and is adjacent to $\eta$. Therefore \cref{prop:st-covering} implies that $t|_{\varphi(u)}\colon \varphi(u)\to t\varphi(u)$ is injective. Indeed, the inverse $\bigl(t|_{\varphi(u)}\bigr)^{-1}\colon t\varphi(u)\to \varphi(u)$ is obtained by mapping $x\in t\varphi(u)$ to 
    \[
    \bigl(t|_{\varphi(u)}\bigr)^{-1}(x)=\bigl(f(u),f(v)\bigr) \cdot \eta \cdot \bigl(t(\eta),x\bigr).
    \]

    Now let $\varphi \in \Hom(G,\Pi H)_f$ and take any $\psi\in \Hom (G,H)$ with $\psi\leq t\varphi$. 
    Define the function $\varphi'\colon V(G)\to\Pow\bigl(V(\Pi H)\bigr)\setminus\{\emptyset\}$ by $\varphi'(u)=\bigl(t|_{\varphi(u)}\bigr)^{-1}\bigl(\psi(u)\bigr)$ for each $u\in V(G)$; in other words, we set 
\[
\varphi'(u)=\bigl\{\,\bigl(f(u),f(v)\bigr) \cdot \eta \cdot \bigl(t(\eta),x\bigr)\;\big\vert\;x\in \psi(u)\,\bigr\}
\]
    for each $u\in V(G)$, where $v\in V(G)$ is a vertex adjacent to $u$ and $\eta\in \varphi(v)$.
    Then $\varphi'$ is a set-valued homomorphism $G\pto \Pi H$ because we have $\varphi'\leq \varphi$ and $\varphi\colon G\pto \Pi H$. Since $t|_{\varphi(u)}$ is injective for each $u\in V(G)$, $\varphi'$ is clearly the unique element of $\Hom(G,\Pi H)_f$ satisfying $\varphi'\leq \varphi$ and $t\varphi'=\psi$.

    Finally, to show the second statement, take any $\varphi\in \Hom(G,\Pi H)_f$ and any $\psi\in \Hom(G,H)$ with $t\varphi\leq \psi$. Suppose that there exists $\varphi'\in \Hom(G,\Pi H)_f$ satisfying $\varphi\leq \varphi'$ and $t\varphi'=\psi$.  Take any $u\in V(G)$, and choose any vertex $v\in V(G)$ adjacent to $u$ and any $\eta\in \varphi(v)$. Then each element of $\varphi'(u)$ must be adjacent to $\eta$, and hence $\varphi'(u)$ is forced to be 
    \[
    \varphi'(u)=\bigl\{\,\bigl(f(u),f(v)\bigr)\cdot \eta\cdot \bigl(t(\eta),x\bigr)\;\big\vert\; x\in \psi(u)\,\bigr\}.
    \]
    This shows the uniqueness of $\varphi'$.
\end{proof}

\begin{remark}
The monotone map 
\[\Hom(G,t)|_{\Hom(G,\Pi H)_f}\colon \Hom(G,\Pi H)_f\to \Hom (G,H)\] 
may fail to be a covering map of posets when $H$ is not square-free.
For example, let $G$ and $H$ be the following graphs. 
\[
\begin{tikzpicture}[baseline=-\the\dimexpr\fontdimen22\textfont2\relax ]
      \node(0) at (0,0) {$\bullet$};
      \node(1) at (1,0) {$\bullet$};
      \node(2) at (2,0) {$\bullet$};
      \node at (1,-1) {$G$};
      \draw (0,0) to (2,0);
      \node at (0.north) {$u$}; 
      \node at (1.north) {$v$}; 
      \node at (2.north) {$w$}; 
\end{tikzpicture}
\qquad\qquad
\begin{tikzpicture}[baseline=-\the\dimexpr\fontdimen22\textfont2\relax ]
      \node(0) at (0,0.5) {$\bullet$};
      \node(1) at (1,0.5) {$\bullet$};
      \node(2) at (1,-0.5) {$\bullet$};
      \node(3) at (0,-0.5) {$\bullet$}; 
      \draw (0,0.5) to (1,0.5) to (1,-0.5) to (0,-0.5) to (0,0.5);
      \node at (0.north west) {$0$}; 
      \node at (1.north east) {$1$}; 
      \node at (2.south east) {$2$}; 
      \node at (3.south west) {$3$}; 
      \node at (0.5,-1) {$H$};
\end{tikzpicture}
\]
Define a graph homomorphism $f\colon G\to H$ by $f(u)=0$, $f(v)=1$, and $f(w)=2$. 
Consider the graph homomorphism $\id_f\colon G\to \Pi H$ (see \cref{def:s-t-i}); we have $\id_f\in \Hom(G,\Pi H)_f$.
Now define a set-valued homomorphism $\psi\colon G\pto H$ by 
$\psi(u)=\{0\}$, $\psi(v)=\{1,3\}$, and $\psi(w)=\{2\}$.
This $\psi$ satisfies $t\;\id_f =f\leq \psi$, but there is no $\varphi\in \Hom(G,\Pi H)_f$ with $\id_f\leq \varphi$ and $t\varphi=\psi$. Indeed, we cannot put the reduced walk $(1,2,3)$ into $\varphi(v)$ since it is not adjacent to $(0)=\id_f(u)$, and similarly we cannot put $(1,0,3)$ into $\varphi(v)$ since it is not adjacent to $(2)=\id_f(w)$.
\end{remark}

The following fact is related to the so-called \emph{monochromatic neighborhood property} \cite{Wro20,LMS25} in combinatorial reconfiguration.

\begin{lemma}
\label{lem:sq-free-singleton}
    Let $G$ and $H$ be graphs satisfying the assumption of \cref{thm:main} and $\psi\colon G\pto H$ a set-valued homomorphism. Suppose that $u\in V(G)$ is a vertex such that $\psi(u)$ has at least two elements.
    Then there exists a (unique) vertex $z\in V(H)$ such that $\psi(v)=\{z\}$ for each $v\in N_G(u)$.
\end{lemma}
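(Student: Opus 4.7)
The plan is to leverage the characterization of square-freeness stated in equation~(2.1) of the preliminaries: in a square-free $H$, any two distinct vertices have at most one common neighbor. Since $\psi(u)$ contains at least two elements, the adjacency condition of a set-valued homomorphism will force every neighbor of $u$ in $G$ to be sent into a very restricted set, namely the common neighborhood of two distinct elements of $\psi(u)$, and square-freeness will collapse this to a singleton.

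In more detail, I would first fix two distinct vertices $x_1,x_2\in\psi(u)$. For any $v\in N_G(u)$, the fact that $\psi\colon G\pto H$ is a set-valued homomorphism means that each element of $\psi(v)$ is adjacent in $H$ to both $x_1$ and $x_2$, so
\[
\psi(v)\subseteq N_H(x_1)\cap N_H(x_2).
\]
By square-freeness of $H$ (equation~(2.1)), the right-hand side has at most one element. Since $\psi(v)$ is nonempty by the definition of a set-valued homomorphism, we conclude $|\psi(v)|=1$, and moreover $\psi(v)$ coincides with $N_H(x_1)\cap N_H(x_2)$, a set that does not depend on $v$. Writing $\{z\}=N_H(x_1)\cap N_H(x_2)$ gives the desired common value.

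To confirm that the statement has content, observe that $N_G(u)$ is nonempty: since $G$ is connected and has at least two vertices (the assumption of \cref{thm:main}), $u$ is not isolated. Thus there is at least one $v\in N_G(u)$, which pins down $z$ uniquely as the unique common neighbor of $x_1$ and $x_2$ (and independently of the choice of the pair $x_1,x_2\in\psi(u)$, since $z$ must equal the single element of $\psi(v)$).

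There is no real obstacle here; the proof is a direct two-line application of the square-free characterization, and the only thing to be slightly careful about is recording that $N_G(u)\neq\emptyset$ and that $\psi(v)$ is required to be nonempty, both of which come for free from the standing hypotheses of \cref{thm:main} and the definition of a set-valued homomorphism, respectively.
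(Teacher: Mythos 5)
Your proof is correct and follows essentially the same argument as the paper: fix two distinct elements of $\psi(u)$, observe that $\psi(v)$ sits inside their common neighborhood for every $v\in N_G(u)$, and invoke square-freeness together with nonemptiness of $\psi(v)$ and of $N_G(u)$ to conclude that this common neighborhood is a singleton independent of $v$. The only (welcome) addition over the paper's text is that you spell out why $N_G(u)\neq\emptyset$, which the paper asserts without comment.
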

\begin{proof}
    Suppose that $\psi(u)$ contains $x$ and $y$ with $x\neq y$. Then we have 
    \[
    \emptyset\subsetneq\psi(v)\subseteq N_H(x)\cap N_H(y)
    \]
    for each $v\in N_G(u)$. Since $N_G(u)\neq \emptyset$, by \cref{eqn:sq-free} there exists $z\in V(H)$ such that $N_H(x)\cap N_H(y)=\{z\}$.
\end{proof}

\cref{thm:cover} is a consequence of the following.
\begin{proposition}
\label{thm:discrete-bifibration}
    Let $G$ and $H$ be graphs satisfying the assumption of \cref{thm:main} and $f\colon G\to H$ a graph homomorphism. Then the monotone map 
    \[
    \Hom(G,t)|_{\Hom(G,\Pi H)_f}\colon \Hom(G,\Pi H)_f\to \Hom (G,H)
    \]
    is a covering map of posets.
\end{proposition}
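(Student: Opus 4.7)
The plan is to finish the proof of Proposition 4.6 by establishing the only remaining clause: the existence part of the discrete opfibration condition. By \cref{prop:HomGt-discrete-fibration} (whose hypothesis is met, since a finite connected graph with at least two vertices has no isolated vertex), we already know that $\Hom(G,t)|_{\Hom(G,\Pi H)_f}$ is a discrete fibration and satisfies the uniqueness clause of the discrete opfibration property. So it remains to show: given $\varphi \in \Hom(G,\Pi H)_f$ and $\psi \in \Hom(G,H)$ with $t\varphi \leq \psi$, there exists $\varphi' \in \Hom(G,\Pi H)_f$ with $\varphi \leq \varphi'$ and $t\varphi' = \psi$.

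The construction is essentially forced by the uniqueness argument in the proof of \cref{prop:HomGt-discrete-fibration}. For each $u \in V(G)$, if $\psi(u) = t\varphi(u)$, set $\varphi'(u) := \varphi(u)$. Otherwise $|\psi(u)| \geq 2$, so by \cref{lem:sq-free-singleton} there is a unique $z = z_u \in V(H)$ with $\psi(v) = \{z\}$ for every $v \in N_G(u)$; since $f(v) \in t\varphi(v) \subseteq \psi(v) = \{z\}$, we then have $f(v) = z$ and $\varphi(v) = \{\eta_v\}$ is forced to be a singleton. In this case set
\[
\varphi'(u) := \bigl\{\,\bigl(f(u), z\bigr) \cdot \eta_v \cdot \bigl(z, x\bigr) \;\big\vert\; x \in \psi(u)\,\bigr\},
\]
where $v$ is any chosen neighbor of $u$.

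The key technical point is well-definedness in $v$. Fix any $\xi_0 \in \varphi(u)$ (nonempty by definition). For each $v_i \in N_G(u)$, $\xi_0$ and $\eta_{v_i}$ are adjacent in $\Pi H$, so by \cref{def:adjacency-reduced-walks} we have $\xi_0 = (f(u),z) \cdot \eta_{v_i} \cdot (z, t(\xi_0))$. Multiplying on the right by $(t(\xi_0), z)$ yields $(f(u),z) \cdot \eta_{v_1} = (f(u),z) \cdot \eta_{v_2}$, so the lift does not depend on the choice of $v$. The same computation, applied with $x = t(\xi)$ for any $\xi \in \varphi(u)$, also shows $\varphi(u) \subseteq \varphi'(u)$, i.e., $\varphi \leq \varphi'$. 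The conditions $t\varphi' = \psi$ and $s\varphi' = f$ are immediate from the construction.

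It remains to verify that $\varphi'$ is a set-valued homomorphism $G \pto \Pi H$. Given an edge $uv \in E(G)$, \cref{lem:sq-free-singleton} implies that at most one of $\psi(u), \psi(v)$ has more than one element; without loss of generality $\psi(v)$ is a singleton $\{z\}$, whence $\varphi'(v) = \{\eta_v\}$. Every element of $\varphi'(u)$ then has the form $\xi = (f(u),z) \cdot \eta_v \cdot (z, x)$ for some $x \in \psi(u)$, so $s(\xi) = f(u)$ is adjacent in $H$ to $s(\eta_v) = z$, $t(\xi) = x$ is adjacent to $t(\eta_v) = z$ (using that $\psi$ is a set-valued homomorphism on $uv$), and the identity $\xi = (s(\xi), s(\eta_v)) \cdot \eta_v \cdot (t(\eta_v), t(\xi))$ holds by construction. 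Hence $\xi$ and $\eta_v$ are adjacent in $\Pi H$, as required. The main obstacle is the well-definedness step, which genuinely requires the square-freeness hypothesis via \cref{lem:sq-free-singleton}: without it, different neighbors could give incompatible certificates $\eta_v$, obstructing any coherent lift.
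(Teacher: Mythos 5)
Your overall plan is the same as the paper's: use \cref{prop:HomGt-discrete-fibration} to reduce to the existence clause of the discrete opfibration condition, and build the lift $\varphi'$ via \cref{lem:sq-free-singleton}. However, there is a genuine error in the construction. You assert ``$f(v)\in t\varphi(v)$'' and deduce ``$f(v)=z$,'' but $\Hom(G,\Pi H)_f$ is defined by $s\varphi=f$, so for $\eta\in\varphi(v)$ it is $s(\eta)$, not $t(\eta)$, that equals $f(v)$. In general $f(v)\neq z$. For a concrete counterexample, take $G$ the path $u\mbox{--}v\mbox{--}w$, $H$ the path $0\mbox{--}1\mbox{--}2\mbox{--}3\mbox{--}4$, $f(u,v,w)=(0,1,2)$, $\varphi(u)=\{(0,1,2)\}$, $\varphi(v)=\{(1,2,3)\}$, $\varphi(w)=\{(2)\}$, and $\psi(u)=\{2,4\}$, $\psi(v)=\{3\}$, $\psi(w)=\{2\}$: here $z=3\neq 1=f(v)$. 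In that situation your formula $(f(u),z)\cdot\eta_v\cdot(z,x)$ is not even defined, since the concatenation of $(f(u),z)$ with $\eta_v$ requires $s(\eta_v)=z$ whereas $s(\eta_v)=f(v)$ (indeed $(0,3)$ is not an edge of $H$). The same misstatement recurs in the well-definedness step ($\xi_0 = (f(u),z)\cdot\eta_{v_i}\cdot(z,t(\xi_0))$) and in the set-valued homomorphism verification ($s(\eta_v)=z$). Also, the fact that $\varphi(v)$ is a singleton does not follow from ``$f(v)=z$''; it follows from $t\varphi(v)=\{z\}$ together with the injectivity of $t|_{\varphi(v)}$, which is the first observation in the proof of \cref{prop:HomGt-discrete-fibration} (and which is where square-freeness enters, via \cref{prop:st-covering}).

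The fix is to replace $(f(u),z)$ by $(f(u),f(v))$ throughout: the correct formula is $\varphi'(u)=\bigl\{(f(u),f(v))\cdot\eta_v\cdot(z,x)\mid x\in\psi(u)\bigr\}$, exactly what \cref{def:adjacency-reduced-walks} gives for the neighbor of $\eta_v$ in $\Pi H$ with source $f(u)$ and target $x$. With this correction your well-definedness computation, the inclusion $\varphi(u)\subseteq\varphi'(u)$, and the set-valued homomorphism check all go through, and the resulting $\varphi'$ coincides with the paper's $\bigl\{\eta_u\cdot(t(\eta_u),z_u,x)\mid x\in\psi(u)\bigr\}$ (they agree by the adjacency identity $\eta_u=(f(u),f(v))\cdot\eta_v\cdot(z,t(\eta_u))$). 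The paper's version avoids the well-definedness step altogether by choosing $\eta_u\in\varphi(u)$ directly, which is the small simplification it buys; otherwise the two arguments are the same.
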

\begin{proof}
    Let $\varphi \in \Hom(G,\Pi H)_f$ and take any $\psi\in \Hom (G,H)$ with $t\varphi\leq \psi$. 
    In view of \cref{prop:HomGt-discrete-fibration}, it suffices to show that there exists \emph{some} $\varphi' \in \Hom(G,\Pi H)_f$ with $\varphi\leq \varphi'$ and $t\varphi'=\psi$.

    This clearly holds when $t\varphi = \psi$. So suppose that there exists $u\in V(G)$ with $t\bigl(\varphi(u)\bigr)\subsetneq\psi(u)$.
    For each such $u$, by \cref{lem:sq-free-singleton} there exists $z_u\in V(H)$ such that $\psi(v)=\{z_u\}$ holds for each $v\in N_G(u)$.
    Choose and fix $\eta_u\in \varphi(u)$ arbitrarily and let $y_u=t(\eta_u)$. Now define 
    \[
    \varphi'(u)=\begin{cases}
        \varphi(u) &\text{if $t\bigl(\varphi(u)\bigr)=\psi(u)$, and}\\
        \bigl\{\,\eta_u \cdot (y_u, z_u,x)\;\big\vert\; x\in \psi(u) \,\bigr\} & \text{if $t\bigl(\varphi(u)\bigr)\subsetneq\psi(u)$}.
    \end{cases}
    \]
    To see that $\varphi'$ is a set-valued homomorphism $G\pto \Pi H$, take any $u\in V(G)$ such that $t\bigl(\varphi(u)\bigr)\subsetneq\psi(u)$. Then for any $v\in N_G(u)$, $\varphi(v)$ is a singleton whose unique element is 
    \[
    \bigl(f(v),f(u)\bigr)\cdot \eta_u\cdot (y_u,z_u).
    \]
    This reduced walk is equal to 
    \[
    \bigl(f(v),f(u)\bigr)\cdot \eta_u \cdot (y_u,z_u,x) \cdot (x,z_u)
    \]
    for each $x\in \psi(u)$, and hence is adjacent to any element in $\varphi'(u)$. 
    This shows that $\varphi'$ is a set-valued homomorphism. We clearly have $\varphi'\in\Hom(G,\Pi H)_f$, $\varphi\leq\varphi'$, and $t\varphi'=\psi$. 
\end{proof}

%%%
\section{Contractibility of $E_f$}\label{sec:contractibility}
Throughout this section, let $G$ and $H$ be graphs satisfying the assumption of \cref{thm:main}, and let $f\colon G\to H$ be a graph homomorphism.
The aim of this section is to show the following.
\begin{theorem}\label{thm:contractible}
    Let $G$ and $H$ be graphs satisfying the assumption of \cref{thm:main} and $f\colon G\to H$ a graph homomorphism. 
    Then $E_f$ defined in \cref{subsec:def-of-pf} is contractible.
\end{theorem}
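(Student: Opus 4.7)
The plan is to exhibit $E_f$ as deformation-retractable onto the one-element subposet $\{\id_f\}$, guided by a walk-length complexity. From the analysis in the proof of \cref{prop:HomGt-discrete-fibration}, for each $\varphi \in \Hom(G,\Pi H)_f$, each $u \in V(G)$, and each $v \in N_G(u)$, the restriction $t|_{\varphi(u)}\colon \varphi(u) \to t\varphi(u)$ is a bijection whose inverse sends $x \in t\varphi(u)$ to $\bigl(f(u),f(v)\bigr)\cdot \eta\cdot \bigl(t(\eta),x\bigr)$ for any fixed $\eta\in\varphi(v)$; so every $\varphi(u)$ has cardinality at most $|V(H)|$, and the complexity
\[
\Lambda(\varphi)\;:=\;\max_{u\in V(G)}\;\max_{\xi\in \varphi(u)}\len(\xi)
\]
is a well-defined natural number, with $\Lambda(\varphi)=0$ forcing $\varphi = \id_f$ (the only reduced walk of length $0$ with source $f(u)$ is $(f(u))$).

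The key combinatorial step is to show that any $\varphi \in E_f$ with $\Lambda(\varphi) > 0$ is connected in $E_f$ by a short monotone zigzag to some $\varphi^{\sharp}$ with $\Lambda(\varphi^{\sharp}) < \Lambda(\varphi)$. Writing $\ell = \Lambda(\varphi)$, I would fix a vertex $u_{\ast}$ at which $\varphi(u_{\ast})$ achieves length $\ell$, and use \cref{lem:sq-free-singleton} to control branching near $u_{\ast}$: either $u_{\ast}$ or each of its neighbors has singleton $\varphi(\cdot)$, permitting a coordinated type-A3 truncation from \cref{prop:adjacency-cases} applied to the maximal-length walks. Concretely, I would first pass to $\widetilde\varphi \geq \varphi$ in $\Hom(G,\Pi H)_f$ obtained by adjoining the type-A3 truncations at $u_{\ast}$ and propagating the induced truncations to all vertices whose walk lengths are coupled to those at $u_{\ast}$ (the coupling controlled by \cref{prop:adjacent-walk-parity}), then restrict to $\varphi^{\sharp} \leq \widetilde\varphi$ by discarding the original length-$\ell$ walks. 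Verifying that both $\widetilde\varphi$ and $\varphi^{\sharp}$ are genuine set-valued homomorphisms is precisely where square-freeness of $H$ is decisive.

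To convert these local zigzags into contractibility of $E_f$, I would package the shortening into a global monotone self-map $\Theta\colon E_f \to E_f$ comparable to the identity and satisfying $\Lambda(\Theta(\varphi)) < \Lambda(\varphi)$ whenever $\Lambda(\varphi) > 0$, and invoke the order-homotopy lemma (see e.g.~\cite{Bjorner-topology, Wachs-poset}) to conclude that $E_f$ is homotopy equivalent to the iterated image of $\Theta$, which collapses to $\id_f$. The main obstacle is exactly this global monotonicity: pointwise shortening at two comparable elements $\varphi \leq \psi$ in $E_f$ can conflict, because the maximal walk-length may be realized at different vertices of $G$ for $\varphi$ and $\psi$, so that the truncation of $\varphi$ need not lift to a compatible truncation of $\psi$. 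The square-freeness of $H$, again through \cref{lem:sq-free-singleton}, is what allows the branching of $\varphi$ to be confined to an independent vertex set, decoupling the shortening choices and enabling a globally monotone construction.
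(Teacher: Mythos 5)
Your proposal correctly identifies the two structural ingredients of the paper's proof: a walk-length complexity function, and the use of closure/interior operators (Corollary 10.12 in \cite{Bjorner-topology}, quoted as \cref{prop:closure-sdr}) to produce strong deformation retractions. You also correctly predict the ``enlarge by a truncation, then discard the longer walks'' pattern for the local moves, which is indeed the shape of the paper's operators $U$ and $D$. However, there is a genuine gap precisely at the point you flag as ``the main obstacle,'' and the resolution you gesture at does not close it.

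The problem is that a single global monotone self-map $\Theta\colon E_f\to E_f$ comparable to the identity with $\Lambda(\Theta(\varphi))<\Lambda(\varphi)$ for $\Lambda(\varphi)>0$ cannot exist. If $\Theta\leq\id$, then for any graph homomorphism $h\in E_f$ (a minimal element) you would need $\Theta(h)\lneq h$ in $\Hom(G,\Pi H)_f$, which is impossible since $h$ is an atom. If $\Theta\geq\id$, it cannot decrease $\Lambda$. So one must zigzag, and the monotonicity problem you describe is then unavoidable with a single zigzag step: given $\varphi\leq\psi$ in $E_f$ with $\Lambda(\varphi)<\Lambda(\psi)$, the truncation locus for $\psi$ (where walks of maximal length sit) is in general disjoint from that of $\varphi$, and your proposed $\varphi\mapsto\varphi^\sharp$ will not satisfy $\varphi^\sharp\leq\psi^\sharp$. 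Your suggested fix---that \cref{lem:sq-free-singleton} confines branching to an independent set---addresses the branching of a single element, but the monotonicity conflict is between \emph{two different} elements $\varphi\leq\psi$ whose maxima are attained at different vertices; square-freeness alone does not decouple those choices. The paper resolves this by a much finer two-level filtration: it first stratifies by maximum walk length $2n$ (the subposets $X_n$), and then, within each step $X_{n-1}\subseteq X_n$, it further filters by which directed paths of $G$ occur in an auxiliary digraph $\overrightarrow{G}_\varphi$ (whose arcs record the adjacency types (A1)/(A2) vs.\ (A3)/(A4) from \cref{prop:adjacency-cases} between maximal-length walks at adjacent vertices of $G$). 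Only after fixing a single path $\omega_i$ of $G$---so that only its terminal vertex $v_k$ is ever modified---can one write down operators $U_{n,i}$, $D_{n,i}$ with the required monotonicity, and the acyclicity of $\overrightarrow{G}_{\varphi,2n}$ (which follows from the explicit description of $E_f$ in \cref{prop:univ-cover-explicitly}, excluding $f$-tight cycles) guarantees that a sink $v_k$ exists to truncate. Your proposal does not introduce anything playing the role of $\overrightarrow{G}_\varphi$ or of the path filtration, and without those the monotonicity claim for $\Theta$ remains unproved; as written, the argument does not go through.
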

In \cref{thm:contractible}, we are claiming the contractibility of the topological space $\gr{E_f}$ associated with the poset $E_f$ (cf.~\cref{subsec:poset-topology}). However, we will use results of poset topology (recalled in \cref{subsec:poset-topology-contractible}) and deduce the above claim via combinatorial arguments on the poset $E_f$.
We note that the proof of \cref{thm:contractible} given in \cref{subsec:proof-of-contractibility} is based on Matsushita’s idea \cite{Mat25}.

\subsection{Explicit description of $E_f$}
We first give a more explicit description of $E_f$ as a subposet of $\Hom(G,\Pi H)_f$. To this end, we use the following notions.

\begin{definition}[{\cite[Section~5]{Wro20}}]\label{def:f-tight}
    A closed walk $\xi=(x_0,x_1,\dots,x_k=x_0)$ in the graph $H$ is \emph{cyclically reduced} if it is reduced, $k\geq 3$, and $x_{k-1}\neq x_{1}$ holds. 
    When dealing with such closed walks, we often adopt the following notational convention to ease the handling of indices. 
    Let $\mathbb{Z}/k\mathbb{Z}$ denote the set $\{\,0,\dots,k-1\,\}$, and we perform the operation of ``addition modulo $k$'' freely on elements $i\in\mathbb{Z}/k\mathbb{Z}$ (so that for example, when $i=k-1$, we have $i+2=1$). 
    Using this, we can express the condition for a closed walk $\xi=(x_0,x_1,\dots,x_k=x_0)$ in $H$ to be cyclically reduced as: $k\geq 3$ and we have $x_{i}\neq x_{i+2}$ for each $i\in \mathbb{Z}/k\mathbb{Z}$. 
    
    A closed walk $\omega$ in the graph $G$ is \emph{$f$-tight} if the closed walk $f(\omega)$ in $H$ is cyclically reduced.
\end{definition}

\begin{definition}\label{def:norm-h}
    For any graph homomorphism $h\colon G\to \Pi H$, define $\norm{h}\in\mathbb{N}$ as 
    \[
    \norm{h}=\sum_{u\in V(G)}\len\bigl(h(u)\bigr).\qedhere
    \]
\end{definition}

Using these notions, we can characterize $E_f$ as follows.

\begin{proposition}\label{prop:univ-cover-explicitly}
    A set-valued homomorphism $\varphi\colon G\pto \Pi H$ with
    $\varphi\in \Hom(G,\Pi H)_f$ is in $E_f$ if and only if the following conditions are satisfied.
    \begin{enumerate}[label=\emph{({\arabic*})}]
        \item For each $u\in V(G)$ and each $\xi\in\varphi(u)$, $\len(\xi)$ is even.
        \item For each $u\in V(G)$ which is in an $f$-tight closed walk, we have $\varphi(u)=\bigl\{\bigl(f(u)\bigr)\bigr\}$.
    \end{enumerate}
\end{proposition}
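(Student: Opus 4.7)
The plan is to introduce $E'_f$ as the subposet of $\Hom(G,\Pi H)_f$ consisting of all $\varphi$ satisfying conditions~(1) and~(2) in the statement, and to prove $E_f = E'_f$ in two halves: first, that $E'_f$ is a union of connected components of $\Hom(G,\Pi H)_f$, which together with the immediate observation $\id_f \in E'_f$ gives the inclusion $E_f \subseteq E'_f$; and second, that every element of $E'_f$ is connected to $\id_f$, giving the reverse inclusion.

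For the first half, I would verify that both~(1) and~(2) are preserved under the comparison $\varphi \leq \psi$ in either direction. The downward implication is immediate since $\varphi(u) \subseteq \psi(u)$ is nonempty. The upward implication for~(1) follows from \cref{prop:adjacent-walk-parity} combined with the connectedness of $G$: the parity of walk lengths is constant across a set-valued homomorphism, so once one $\varphi(u)$ contains an even-length walk, all walks in $\psi$ must also have even length. The upward implication for~(2) is more delicate. Given $\varphi \in E'_f$ and $u = w_0$ on an $f$-tight closed walk $\omega = (w_0, w_1, \dots, w_k = w_0)$, each cyclic rotation of $\omega$ is also $f$-tight, hence $\varphi(w_i) = \bigl\{\bigl(f(w_i)\bigr)\bigr\}$ for every $i$, and in particular $\bigl(f(w_1)\bigr) \in \psi(w_1)$ and $\bigl(f(w_{k-1})\bigr) \in \psi(w_{k-1})$. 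If some $\xi \in \psi(u)$ had length $\geq 2$, then applying \cref{prop:adjacency-cases} to the ordered pairs $\bigl(\xi, (f(w_1))\bigr)$ and $\bigl(\xi, (f(w_{k-1}))\bigr)$ leaves only case (A3) as a possibility, forcing $\len(\xi) = 2$ and $\xi = \bigl(f(u), f(w_1), z\bigr) = \bigl(f(u), f(w_{k-1}), z'\bigr)$; this contradicts $f(w_1) \neq f(w_{k-1})$, which holds by cyclic reducedness of $f(\omega)$.

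For the second half, I would first reduce to graph homomorphisms: any $\varphi \in E'_f$ lies above some graph homomorphism $h\colon G \to \Pi H$ obtained by selecting one walk from each $\varphi(u)$, and conditions~(1) and~(2) transfer to such an $h$ automatically. It therefore suffices to prove that every graph homomorphism $h \in E'_f$ is connected to $\id_f$ in $\Hom(G,\Pi H)_f$, which I would do by induction on $\norm{h}$ (see \cref{def:norm-h}), with the base case $\norm{h} = 0$ giving $h = \id_f$. For the inductive step, I would pick a vertex $u^*$ maximizing $\len(h(u^*)) \geq 2$ and construct a graph homomorphism $h' \in E'_f$ with $\norm{h'} < \norm{h}$ together with a set-valued homomorphism $\psi \in \Hom(G,\Pi H)_f$ satisfying $h, h' \leq \psi$, which places $h$ and $h'$ in the same connected component.

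The main obstacle is constructing $h'$ and $\psi$ in the inductive step. The key input from condition~(2) is that since $\len(h(u^*)) \geq 2$, the vertex $u^*$ cannot lie on any $f$-tight closed walk. The difficulty is that naively truncating the walk $h(u^*)$ is often incompatible with the adjacency constraints at the neighbors of $u^*$, since the cases (A1)--(A4) of \cref{prop:adjacency-cases} impose different local restrictions; one must therefore coordinate the modification across a carefully chosen subset of vertices---perhaps the connected component of $u^*$ in the subgraph of $G$ spanned by vertices of maximal walk length. Verifying that the absence of $f$-tight closed walks through $u^*$ is precisely what prevents these coordinated modifications from producing an inconsistency is the technical heart of the argument.
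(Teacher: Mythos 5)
Your outer structure matches the paper's almost exactly: the paper also splits the proof into showing that conditions~(1) and~(2) are invariant under the adjacency relation (whence $E_f \subseteq E'_f$), and then proving by induction on $\norm{h}$ that every graph homomorphism satisfying~(1) and~(2) is connected to $\id_f$, after reducing the set-valued case to the graph-homomorphism case (this reduction is precisely the passage from \cref{prop:univ-cover-explicitly} to \cref{prop:univ-cover-explicitly-atom}). Your arguments for the first half are correct: the parity argument via \cref{prop:adjacent-walk-parity} and the contradiction via $f(w_1) \neq f(w_{k-1})$ both work, and the latter is essentially the same case analysis that appears in the paper.

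However, you have left a genuine gap in the inductive step, and you say so yourself. The paper closes it with a construction you don't quite land on: for a graph homomorphism $h$ with $\norm{h} > 0$, one forms the digraph $\overrightarrow{G}_h$ (\cref{def:Gh}) whose vertices are $\{\,u \in V(G) \mid \len\bigl(h(u)\bigr) \geq 2\,\}$ and whose edges are the edges of $G$ between such vertices, oriented from $u$ to $v$ when the type of $\bigl(h(u), h(v)\bigr)$ is (A1) or (A2). The three facts that make this work are: arcs never decrease walk length (\cref{lem:Gh-len}); truncating $h$ by two at a \emph{sink} of $\overrightarrow{G}_h$ still yields a graph homomorphism (\cref{lem:sink-of-Gh-can-be-recolored}), since sinkhood precisely guarantees that all neighboring types are (A3) or (A4); and every directed cycle of $\overrightarrow{G}_h$ is an $f$-tight closed walk (\cref{lem:directed-cycle-is-f-tight}). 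Condition~(2) rules out such cycles, so the finite nonempty digraph $\overrightarrow{G}_h$ must have a sink, and truncating there produces an adjacent $h'$ with $\norm{h'} = \norm{h} - 2$. Note that the vertex to truncate at is a \emph{sink}, not a length-maximizer as you propose; a maximizer need not be a sink (though one can show there is always a sink among the maximizers, by the same cycle-to-$f$-tight argument), and it is the sink condition, not maximality, that makes \cref{lem:sink-of-Gh-can-be-recolored} go through. Your instinct that absence of $f$-tight closed walks prevents an obstruction is correct; what you are missing is that the obstruction to be excluded is a directed cycle in this particular orientation of $G$.

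One minor slip: in your downward comparison you wrote ``$\varphi(u) \subseteq \psi(u)$ is nonempty'' where you meant $\psi(u) \subseteq \varphi(u)$; the argument is fine, just the inclusion is reversed.
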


\cref{prop:univ-cover-explicitly} is a direct consequence of the following. 

\begin{proposition}[{Cf.~\cite[Theorem~6.1]{Wro20} and \cite[Proposition~4.5]{cycle}}]\label{prop:univ-cover-explicitly-atom}
    A graph homomorphism $h\colon G\to \Pi H$ with $h\in\Hom(G,\Pi H)_f$ is in $E_f$ if and only if the following conditions are satisfied.
    \begin{enumerate}[label=\emph{({\arabic*})}]
        \item For each $u\in V(G)$, $\len\bigl(h(u)\bigr)$ is even.
        \item For each $u\in V(G)$ which is in an $f$-tight closed walk, we have $h(u)=\bigl(f(u)\bigr)$.
    \end{enumerate}
\end{proposition}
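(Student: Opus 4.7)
For the forward direction ($h \in E_f$ implies (1) and (2)), I proceed by induction on the length of a zigzag $\id_f = h_0 \leq \varphi_0 \geq h_1 \leq \cdots \geq h_n = h$ in $\Hom(G, \Pi H)_f$ connecting $\id_f$ to $h$, where each $h_i$ is a graph homomorphism and each $\varphi_i$ is a set-valued homomorphism. The base case $h = \id_f$ is trivial. For the inductive step, suppose $h_i$ satisfies (1), (2) and $h_i, h_{i+1} \leq \varphi$ for some $\varphi \in \Hom(G, \Pi H)_f$. For (1): for each $u \in V(G)$, pick any $v \in N_G(u)$ (existing since $G$ is connected with $\geq 2$ vertices) and any $\eta \in \varphi(v)$; both $h_i(u), h_{i+1}(u) \in \varphi(u)$ are adjacent in $\Pi H$ to $\eta$, so \cref{prop:adjacent-walk-parity} gives $\len(h_{i+1}(u)) \equiv \len(\eta) \equiv \len(h_i(u)) \pmod 2$, hence even. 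For (2): if $u = u_0$ lies on an $f$-tight closed walk $\omega = (u_0, u_1, \dots, u_k = u_0)$, then by the inductive hypothesis $h_i(u_j) = (f(u_j))$ for all $j$, and I claim $\varphi(u_j) = \{(f(u_j))\}$ for all $j$. Otherwise some $\xi \in \varphi(u_j)$ would have $\len(\xi) > 0$ and $s(\xi) = f(u_j)$, yet be adjacent in $\Pi H$ to both length-zero walks $(f(u_{j\pm 1}))$. Inspection of \cref{prop:adjacency-cases} shows that the adjacency of a positive-length walk with a length-zero walk can only be via case (A3), forcing $\xi = (f(u_j), f(u_{j-1}), x) = (f(u_j), f(u_{j+1}), y)$ and hence $f(u_{j-1}) = f(u_{j+1})$, contradicting the cyclic reducedness of $f(\omega)$. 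So $\varphi(u_j) = \{(f(u_j))\}$ and $h_{i+1}(u_j) = (f(u_j))$.

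For the reverse direction, I induct on $\norm{h}$. If $\norm{h} = 0$, then $h = \id_f \in E_f$. Otherwise let $\ell^* = \max_u \len(h(u)) \geq 2$ (even, by (1)) and $V^* = \{u \in V(G) : \len(h(u)) = \ell^*\}$; by (2), no vertex of $V^*$ lies on any $f$-tight closed walk. The strategy is to find $u^* \in V^*$ with $h(u^*) = (x_0, \dots, x_{\ell^*})$ such that the naive shortening $h'(u^*) = (x_0, \dots, x_{\ell^* - 2})$ (with $h'$ unchanged elsewhere) is a graph homomorphism; then $\norm{h'} = \norm{h} - 2$ and the set-valued homomorphism $\varphi$ defined by $\varphi(u^*) = \{h(u^*), h'(u^*)\}$ and $\varphi(v) = \{h(v)\}$ for $v \neq u^*$ witnesses $h, h' \leq \varphi$, so $h' \in E_f$ by induction (since $h'$ inherits (1), (2) by inspection, because the only modified vertex $u^*$ is not on any $f$-tight walk), giving $h \in E_f$. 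For $u \in V^*$ and $v \in N_G(u)$, \cref{prop:adjacency-cases} together with the maximality of $\ell^*$ restrict the type of $(h(u), h(v))$ to (A2), (A3), or (A4); direct computation shows the shortening at $u^*$ is compatible with neighbors of types (A3) (the new adjacency becomes (A2)) and (A4) (the new adjacency becomes (A1)), but fails for type (A2) because it would force $x_i = x_{i+2}$, contradicting the reducedness of $h(u^*)$. So it suffices to find some $u^* \in V^*$ with no neighbor of type (A2).

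To find such $u^*$, consider the digraph $D$ on $V^*$ with an arc $u \to v$ whenever $(h(u), h(v))$ is of type (A2). If every vertex of $V^*$ had positive out-degree in $D$, finiteness of $V(G)$ would yield a directed cycle $u^{(1)} \to u^{(2)} \to \cdots \to u^{(m)} \to u^{(1)}$. Unfolding the (A2) constraints, the walks $h(u^{(i)}) = (a_{i-1}, a_i, \dots, a_{i + \ell^* - 1})$ become consecutive length-$\ell^*$ windows in a common sequence $(a_n)$, and closing the cycle forces periodicity $a_n = a_{n+m}$. The case $m = 2$ immediately contradicts the reducedness of $h(u^{(1)})$; for $m \geq 3$, the closed walk $\omega = (u^{(1)}, u^{(2)}, \dots, u^{(m)}, u^{(1)})$ in $G$ projects under $f$ to $(a_0, a_1, \dots, a_{m-1}, a_0)$, and the reducedness of each $h(u^{(i)})$ combined with periodicity and $\ell^* \geq 2$ implies that this projection is cyclically reduced (the interior inequalities $a_k \neq a_{k+2}$ come from the individual reduced walks, and the wrap-around inequalities reduce to these via periodicity). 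Then $\omega$ is $f$-tight and contains $u^{(1)}$ with $\len(h(u^{(1)})) = \ell^* \geq 2$, contradicting (2). Hence some $u^* \in V^*$ has no outgoing (A2) arc, completing the inductive step. The main obstacle is precisely the existence of this ``good'' vertex $u^*$: without condition (2) ruling out (A2) cycles, the shortening would have to propagate across chains of (A2)-related vertices, for which the simple set-valued homomorphism $\varphi$ above would no longer be adequate.
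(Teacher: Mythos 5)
Your proof is correct and takes essentially the same approach as the paper's. For the ``if'' direction you both induct on $\norm{h}$, build a digraph on high-length vertices oriented by the type of each edge, locate a sink (with the absence of a sink forcing an $f$-tight directed cycle, contradicting condition (2)), and shorten there; your restriction of the digraph to the maximal-length vertices $V^*$ is a harmless variant of the paper's $\overrightarrow{G}_h$ on all vertices of length $\geq 2$, since the paper's Lemma \ref{lem:Gh-len} already implies that any directed cycle lies in a single length level. For the ``only if'' direction you both track conditions (1) and (2) along a chain connecting $\id_f$ to $h$, with you working directly against the common upper bound $\varphi$ while the paper first reduces to one-vertex adjacency; the underlying obstruction ($f(u_{j-1}) \neq f(u_{j+1})$ from cyclic reducedness) is identical.
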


The proof of (the ``if'' part of) \cref{prop:univ-cover-explicitly-atom} is based on the following construction. 

\begin{definition}[{Cf.~\cite[Proof of Theorem~6.1]{Wro20}}]\label{def:Gh}
    For any graph homomorphism $h\colon G\to \Pi H$, we define the digraph $\overrightarrow{G}_h$ as follows. The set $V(\overrightarrow{G}_h)$ of vertices of $\overrightarrow{G}_h$ is given by 
    \[
    V(\overrightarrow{G}_h)=\bigl\{\,u\in V(G)\;\big\vert\; \len\bigl(h(u)\bigr)\geq 2\,\bigr\}
    \]
    and the set $A(\overrightarrow{G}_h)$ of arcs of $\overrightarrow{G}_h$ is given by 
    \begin{multline*}
    A(\overrightarrow{G}_h)=\bigl\{\,(u,v)\in V(\overrightarrow{G}_h)\times V(\overrightarrow{G}_h) \;\big\vert\; \text{$uv\in E(G)$ and}\\
    \text{the type of $\bigl(h(u),h(v)\bigr)$ is either (A1) or (A2)}\,\bigr\},
    \end{multline*}
    where the notion of type was defined in \cref{def:type}.
    Notice that in the definition of $A(\overrightarrow{G}_h)$, if $(u,v)\in V(\overrightarrow{G}_h)\times V(\overrightarrow{G}_h)$ satisfies $uv\in E(G)$, then $h(u)$ and $h(v)$ are adjacent in $\Pi H$. 
    Since the lengths of $h(u)$ and $h(v)$ are greater than $1$, the type of $\bigl(h(u),h(v)\bigr)$ is one of (A1)--(A4).  
    According to the above definition, we have $(u,v)\in A(\overrightarrow{G}_h)$ when the type of $\bigl(h(u),h(v)\bigr)$ is either (A1) or (A2).  
    Otherwise, i.e., if the type of $\bigl(h(u),h(v)\bigr)$ is either (A3) or (A4), then we have $(v,u)\in A(\overrightarrow{G}_h)$ instead. Therefore the digraph $\overrightarrow{G}_h$ is obtained from the induced subgraph of $G$ on the vertex set $V(\overrightarrow{G}_h)$ by choosing a direction of each edge. 
    Explicitly, for $u,v\in V(\overrightarrow{G}_h)$ with $uv\in E(G)$, if we let $h(u)=(x_0,\dots, x_\ell)$ and $h(v)=(y_0,\dots, y_{\ell'})$, then we have $(u,v)\in A(\overrightarrow{G}_h)$ precisely when $x_\ell=y_{\ell'-1}$ holds. 
\end{definition}

\begin{lemma}\label{lem:Gh-len}
    Let $h\colon G\to \Pi H$ be a graph homomorphism. For any $(u,v)\in A(\overrightarrow{G}_h)$, we have $\len\bigl(h(u)\bigr)\leq \len \bigl(h(v)\bigr)$.
\end{lemma}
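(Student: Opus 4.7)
The plan is to simply unfold definitions; no real obstacle is involved. By the definition of $A(\overrightarrow{G}_h)$ given in \cref{def:Gh}, the hypothesis $(u,v)\in A(\overrightarrow{G}_h)$ forces $uv\in E(G)$, both $\len\bigl(h(u)\bigr),\len\bigl(h(v)\bigr)\geq 2$, and the type of the ordered pair $\bigl(h(u),h(v)\bigr)$ (in the sense of \cref{def:type}) to be either (A1) or (A2).

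I would then read off the length relation directly from \cref{prop:adjacency-cases}. In case (A1), we have $\ell+2=\ell'$, i.e.\ $\len\bigl(h(u)\bigr)+2=\len\bigl(h(v)\bigr)$, so certainly $\len\bigl(h(u)\bigr)<\len\bigl(h(v)\bigr)$. In case (A2), we have $\ell=\ell'$, i.e.\ $\len\bigl(h(u)\bigr)=\len\bigl(h(v)\bigr)$. In either case $\len\bigl(h(u)\bigr)\leq \len\bigl(h(v)\bigr)$, which is the desired inequality. Since the lengths of $h(u)$ and $h(v)$ are at least $2$, we do not need to worry about the edge cases (A5) of \cref{prop:adjacency-cases}, and \cref{prop:adjacency-walk-cases} guarantees that ``the type'' is unambiguously defined for the pair $\bigl(h(u),h(v)\bigr)$, so the appeal to (A1)/(A2) is legitimate.

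In short, the lemma is an immediate consequence of how the orientation on $\overrightarrow{G}_h$ was chosen in \cref{def:Gh}: arcs are drawn from shorter (or equal-length) walks to longer (or equal-length) walks. There is nothing subtle here, and I would expect the written proof to take only two or three lines.
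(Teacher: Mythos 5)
Your proof is correct and matches the paper's argument exactly: unfold the definition of $A(\overrightarrow{G}_h)$ to see the type is (A1) or (A2), then read off $\len\bigl(h(u)\bigr)+2=\len\bigl(h(v)\bigr)$ or $\len\bigl(h(u)\bigr)=\len\bigl(h(v)\bigr)$ from \cref{prop:adjacency-cases}. Your extra remark about well-definedness of the type is a nice sanity check but is already handled by \cref{def:type}.
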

\begin{proof}
    As explained in \cref{def:Gh}, $(u,v)\in A(\overrightarrow{G}_h)$ means that the type of $\bigl(h(u),h(v)\bigr)$ is either (A1) or (A2). Therefore we have either $\len\bigl(h(u)\bigr)+2=\len \bigl(h(v)\bigr)$ or $\len\bigl(h(u)\bigr)=\len \bigl(h(v)\bigr)$.
\end{proof}

A vertex $v$ in a digraph $\overrightarrow{A}$ is called a \emph{sink} in $\overrightarrow{A}$ if there exists no vertex $w$ in $\overrightarrow{A}$ such that $(v,w)$ is an arc in $\overrightarrow{A}$.

\begin{lemma}\label{lem:sink-of-Gh-can-be-recolored}
    Let $h\colon G\to \Pi H$ be a graph homomorphism. 
    If $v$ is a sink in $\overrightarrow{G}_h$, with $h(v)=(x_0,\dots, x_\ell)$, then the function $h'\colon V(G)\to V(\Pi H)$, defined as 
    \[
    h'(u)=\begin{cases}
        (x_0,\dots, x_{\ell-2}) &\text{if $u=v$, and}\\
        h(u)&\text{if $u\neq v$}
    \end{cases}
    \]
    for each $u\in V(G)$, is a graph homomorphism $G\to \Pi H$.
\end{lemma}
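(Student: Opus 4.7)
The plan is to verify directly that $h'$ is a graph homomorphism $G\to\Pi H$. Since $h'$ agrees with $h$ away from $v$, the only adjacency relations that need checking are those between $h'(v)$ and $h'(u)=h(u)$ for $u\in N_G(v)$. So we fix such a $u$, write $h(u)=(y_0,\dots,y_{\ell'})$, and recall that $h(v)=(x_0,\dots,x_\ell)$ with $\ell\ge 2$ because $v\in V(\overrightarrow{G}_h)$. Since $h$ is a homomorphism, $h(v)$ and $h(u)$ are adjacent in $\Pi H$, so by \cref{prop:adjacency-cases} the ordered pair $(h(v),h(u))$ satisfies at least one of (A1)--(A5); we case-split on $\ell'$.

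When $\ell'\ge 2$ we have $u\in V(\overrightarrow{G}_h)$, and since $v$ is a sink the arc $(v,u)$ is absent from $\overrightarrow{G}_h$; by the definition of $A(\overrightarrow{G}_h)$ this rules out types (A1) and (A2) for $(h(v),h(u))$, and \cref{prop:adjacency-walk-cases} then forces the type to be exactly (A3) or (A4). Reading off the explicit forms listed in \cref{prop:adjacency-cases}: in sub-case (A3) one has $h(v)=(x_0,y_0,\dots,y_{\ell'},x_\ell)$, hence $h'(v)=(x_0,y_0,\dots,y_{\ell'-1})$, and the pair $(h'(v),h(u))$ satisfies (A2); in sub-case (A4) one has $h(u)=(y_0,x_0,\dots,x_{\ell-1})$, so that $(h'(v),h(u))$ with $h'(v)=(x_0,\dots,x_{\ell-2})$ satisfies (A1). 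Either way, $h'(v)$ and $h(u)$ are adjacent in $\Pi H$.

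When $\ell'\in\{0,1\}$, inspecting the length constraints of (A1)--(A5) shows that adjacency of $(h(v),h(u))$ can only come from type (A3), forcing $\ell=\ell'+2$. For $\ell'=0$ this gives $h(v)=(x_0,x_1,x_2)$ and $h(u)=(x_1)$, so $h'(v)=(x_0)$ is adjacent to $h(u)$ via (A5), since $x_0x_1\in E(H)$. For $\ell'=1$ this gives $h(v)=(x_0,x_1,x_2,x_3)$ with $h(u)=(x_1,x_2)$, so $h'(v)=(x_0,x_1)$; a direct check against \cref{def:adjacency-reduced-walks}, using $\overline{(x_0,x_1)\bullet(x_1,x_2)\bullet(x_2,x_1)}=(x_0,x_1)$, confirms adjacency. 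The main obstacle is purely organizational: there is no conceptual difficulty beyond threading the five adjacency types through a handful of sub-cases, with the sink condition doing exactly the work needed to eliminate the two types (A1) and (A2) that would preclude the shortening.
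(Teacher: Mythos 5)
Your proof is correct and uses the same strategy as the paper's: exploit the sink hypothesis to pin down the type of the ordered pair $\bigl(h(v),h(u)\bigr)$ as (A3) or (A4) for every neighbor $u$ of $v$, then verify that the truncated walk $h'(v)$ remains adjacent to $h(u)$ via (A1), (A2), or (A5). The paper states this in one line without the extra split on $\ell'=\len\bigl(h(u)\bigr)$, since ruling out types (A1), (A2), (A5) works uniformly (any of these would force $u\in V(\overrightarrow{G}_h)$ with $(v,u)\in A(\overrightarrow{G}_h)$, or $\len\bigl(h(v)\bigr)=0$), but your more granular case analysis reaches the same conclusion.
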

\begin{proof}
    Let $w\in V(G)$ be a vertex adjacent to $v$. Then $h(v)$ and $h(w)$ are adjacent, and the type of $\bigl(h(v),h(w)\bigr)$ is either (A3) or (A4). It follows that $h'(v)$ and $h'(w)=h(w)$ are also adjacent.
\end{proof}

\begin{lemma}\label{lem:directed-cycle-is-f-tight}
    Let $h\colon G\to \Pi H$ be a graph homomorphism with $h\in \Hom(G,\Pi H)_f$.
    Then every directed cycle in $\overrightarrow{G}_h$ is an $f$-tight closed walk.
\end{lemma}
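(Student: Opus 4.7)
The plan is to show that if $\omega = (u_0, u_1, \dots, u_k = u_0)$ is a directed cycle in $\overrightarrow{G}_h$, then the closed walk $f(\omega)$ in $H$ is cyclically reduced. I would begin by applying \cref{lem:Gh-len} to the arcs of the cycle: the sequence
\[
\len\bigl(h(u_0)\bigr)\leq \len\bigl(h(u_1)\bigr)\leq \dots \leq \len\bigl(h(u_k)\bigr)=\len\bigl(h(u_0)\bigr)
\]
must be constant, say equal to some $\ell$, with $\ell\geq 2$ because $u_i\in V(\overrightarrow{G}_h)$. Since arcs of type (A1) strictly increase length, the type of every consecutive pair $\bigl(h(u_i),h(u_{i+1})\bigr)$ must in fact be (A2).

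Next I would exploit the type (A2) identity to read off the entries of the walks as one moves around the cycle. Writing $h(u_i)=(x^i_0,x^i_1,\dots,x^i_\ell)$, type (A2) gives the recurrence $x^{i+1}_j=x^i_{j+1}$ for $0\leq j\leq \ell-1$. Since $h\in\Hom(G,\Pi H)_f$ we have $x^i_0=f(u_i)$, and a straightforward induction then yields $x^i_j=f(u_{i+j})$ for all $0\leq j\leq \ell$, with indices of $u$ read modulo $k$. In other words, each $h(u_i)$ is literally a length-$\ell$ consecutive stretch of the closed walk $f(\omega)$, read cyclically.

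Now the reducedness of $h(u_i)$ gives $f(u_{i+j})\neq f(u_{i+j+2})$ for all $0\leq j\leq \ell-2$. Letting $i$ range over all starting vertices of $\omega$ and using $\ell\geq 2$, every index modulo $k$ is covered, so $f(u_j)\neq f(u_{j+2})$ for every $j\in\mathbb{Z}/k\mathbb{Z}$. It only remains to check $k\geq 3$: the digraph $\overrightarrow{G}_h$ has no loops (as $G$ is loopless) and no $2$-cycles---if both $(u,v)$ and $(v,u)$ were arcs, then the types of $\bigl(h(u),h(v)\bigr)$ and $\bigl(h(v),h(u)\bigr)$ would both be (A1) or (A2), but the former has type (A1) or (A2) iff the latter has type (A3) or (A4), and by \cref{prop:adjacency-walk-cases} (applicable since both walks have length $\geq 2$) the type is unique. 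Hence $k\geq 3$, and $f(\omega)$ is cyclically reduced.

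The main point to get right is the inductive identification $x^i_j=f(u_{i+j\bmod k})$, since it couples indexing along the individual walks with indexing around the cycle; once the type-(A2) recurrence is set up this is essentially immediate. The remaining ingredients---forcing equal lengths, ruling out short cycles, and translating reducedness of each $h(u_i)$ into cyclic reducedness of $f(\omega)$---are routine.
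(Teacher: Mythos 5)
Your proof is correct and follows essentially the same route as the paper: constant length around the cycle via \cref{lem:Gh-len}, forcing every arc to have type (A2), then the recurrence $x^{i+1}_j=x^i_{j+1}$ combined with $x^i_0=f(u_i)$ and reducedness of $h(u_i)$. The paper applies the recurrence only twice (to get $f(v_i)=x_{i,0}\neq x_{i,2}=x_{i+1,1}=x_{i+2,0}=f(v_{i+2})$) rather than establishing the full identification $x^i_j=f(u_{i+j})$, and it treats $k\geq 3$ as built into the definition of a directed cycle (relying on the observation in \cref{def:Gh} that $\overrightarrow{G}_h$ is an orientation of a subgraph of the simple graph $G$, so loops and $2$-cycles are already excluded); your explicit verification of both points is sound but not strictly needed.
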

\begin{proof}
    Let $\omega=(v_0,v_1,\dots, v_k=v_0)$ ($k\geq 3$) be a directed cycle in $\overrightarrow{G}_h$. First observe that we have $\len\bigl(h(v_0)\bigr)\leq \dots\leq \len\bigl(h(v_k)\bigr)=\len\bigl(h(v_0)\bigr)$ by \cref{lem:Gh-len}, and hence $\len\bigl(h(v_0)\bigr)= \dots = \len\bigl(h(v_k)\bigr)$, which we write as $\ell$; note that $\ell\geq 2$ holds by the definition of $V(\overrightarrow{G}_h)$. 
    In what follows, we adopt the notational convention explained in \cref{def:f-tight}. 
    Letting $h(v_i)=(x_{i,0},x_{i,1},\dots,x_{i,\ell})$ for each $i\in \mathbb{Z}/k\mathbb{Z}$, we have $f(v_i)=x_{i,0}$ for each $i\in \mathbb{Z}/k\mathbb{Z}$ by $sh=f$, and $x_{i,j+1}=x_{i+1,j}$ for each $i\in \mathbb{Z}/k\mathbb{Z}$ and each $0\leq j\leq \ell-1$ since the type of $\bigl(h(v_i),h(v_{i+1})\bigr)$ is (A2). These imply that we have $f(v_i)=x_{i,0}\neq x_{i,2}=x_{i+1,1}=x_{i+2,0}=f(v_{i+2})$ for each $i\in \mathbb{Z}/k\mathbb{Z}$ since $h(v_i)$ is a reduced walk and $\ell\geq 2$. Therefore $\omega$ is an $f$-tight closed walk. 
\end{proof}

\begin{proof}[Proof of \cref{prop:univ-cover-explicitly-atom}]
    Modulo suitable translation (cf.~\cref{prop:homotopy-top-valid-walk}) and modification, this would follow from \cite[Proof of Theorem~6.1]{Wro20};
    below we give a self-contained proof using the terminology of this paper. 

    We start with the proof of the ``only if'' part. Since $\id_f\colon G\to \Pi H$ clearly satisfies (1) and (2) of \cref{prop:univ-cover-explicitly-atom}, it suffices to show the following: given graph homomorphisms $h_1,h_2\colon G\to \Pi H$ such that $h_1,h_2\in \Hom(G,\Pi H)_f$ and that $h_1$ and $h_2$ are adjacent (see \cref{subsec:graph-hom}), if $h_1$ satisfies (1) and (2), then so does $h_2$. 
    Let $v\in V(G)$ be the unique vertex with $h_1(v)\neq h_2(v)$. 
    It suffices to check (1) and (2) for $h_2$ in the case $u=v$. As for (1), let $w\in V(G)$ be a vertex adjacent to $v$ (such a vertex exists since $G$ is connected and has at least two vertices). The reduced walk $h_1(w)=h_2(w)$ has even length since $h_1$ satisfies (1), and is adjacent to $h_2(v)$. By \cref{prop:adjacent-walk-parity}, we see that $\len\bigl(h_2(v)\bigr)$ is even. 
    To show (2), it suffices to show that $v$ cannot be in any $f$-tight closed walk in $G$. Thus suppose to the contrary that there exists an $f$-tight closed walk $(v_0,v_1,\dots,v_k=v_0)$ in $G$ with $v=v_0$. We have $h_1(v_i)=\bigl(f(v_i)\bigr)$ for each $i\in \mathbb{Z}/k\mathbb{Z}$ since $h_1$ satisfies (2). Now, by $sh_2=f$ and $\bigl(f(v)\bigr)=h_1(v)\neq h_2(v)$ we see that $h_2(v)$ must be a reduced walk of length $2$, but this cannot be adjacent to both $h_2(v_1)=\bigl(f(v_1)\bigr)$ and $h_2(v_{k-1})=\bigl(f(v_{k-1})\bigr)$ because we have $f(v_1)\neq f(v_{k-1})$ by the $f$-tightness, a contradiction. 
    
    The ``if'' part is proved by induction on $\norm{h}$. The base case is trivial: if $h\colon G\to \Pi H$ with $sh=f$ satisfies $\norm{h}=0$, then $h=\id_f$. Thus take a graph homomorphism $h\colon G\to \Pi H$ satisfying $sh=f$, (1), (2), and $\norm{h}>0$. 
    We first show that the digraph $\overrightarrow{G}_h$ has a sink. Indeed, the (finite) set  $V(\overrightarrow{G}_h)$ is nonempty by (1) and $\norm{h}>0$. 
    Hence, if $\overrightarrow{G}_h$ had no sink, then we would be able to find a directed cycle in $\overrightarrow{G}_h$, which is an $f$-tight closed walk by \cref{lem:directed-cycle-is-f-tight}. However, this contradicts the assumption that $h$ satisfies (2). 
    Thus let $v\in V(\overrightarrow{G}_h)\subseteq V(G)$ be a sink in $\overrightarrow{G}_h$ and let $h(v)=(x_0,\dots,x_\ell)$. Then the function $h'\colon V(G)\to V(\Pi H)$, defined as 
    \[
    h'(u)=\begin{cases}
        (x_0,\dots, x_{\ell-2}) &\text{if $u=v$, and}\\
        h(u) &\text{if $u\neq v$,}
    \end{cases}
    \]
    is a graph homomorphism $G\to \Pi H$ by \cref{lem:sink-of-Gh-can-be-recolored}. It is clear that $h'$ satisfies $sh'=f$, (1), and (2), and we have $\norm{h'}=\norm{h}-2$. Hence we have $h'\in E_f$ by the induction hypothesis. Since $h'$ and $h$ are adjacent, we conclude $h\in E_f$. 
\end{proof}

\begin{proof}[Proof of \cref{prop:univ-cover-explicitly}]
    This follows from \cref{prop:univ-cover-explicitly-atom} by the following observations. Let $\varphi\in \Hom(G,\Pi H)_f$. Then we have $\varphi\in E_f$ if and only if for any graph homomorphism $h\colon G\to \Pi H$ with $h\leq \varphi$, we have $h\in E_f$.
    Similarly, $\varphi$ satisfies (1) and (2) of \cref{prop:univ-cover-explicitly} if and only if any graph homomorphism $h\colon G\to \Pi H$ with $h\leq \varphi$ satisfies (1) and (2) of \cref{prop:univ-cover-explicitly-atom}.
\end{proof}

\subsection{Results from poset topology}\label{subsec:poset-topology-contractible}
Here we recall results in poset topology used in the proof of \cref{thm:contractible}. 
\begin{definition}
    Let $P$ be a poset and $\alpha\colon P\to P$ be a monotone map. 
    \begin{itemize}
        \item $\alpha$ is a \emph{closure operator} if for each $p\in P$, $p\leq \alpha(p)$ and $\alpha^2(p)=\alpha(p)$ hold.
        \item $\alpha$ is an \emph{interior operator} if for each $p\in P$, $p\geq \alpha(p)$ and $\alpha^2(p)=\alpha(p)$ hold.\qedhere
    \end{itemize}
\end{definition}

\begin{proposition}[{See e.g.~\cite[Corollary~10.12 and the subsequent sentence]{Bjorner-topology}}]\label{prop:closure-sdr}
    Let $P$ be a poset and $\alpha\colon P\to P$ be either a closure operator or an interior operator. Let $\alpha(P)$ be the image of $P$, regarded as a subposet of $P$. Then $\alpha(P)$ is a strong deformation retract of $P$.
\end{proposition}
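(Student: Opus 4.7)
The plan is to derive the proposition from the standard \emph{order homotopy} principle of poset topology: if $\beta_0, \beta_1 \colon P \to Q$ are monotone maps between posets with $\beta_0(p) \leq \beta_1(p)$ for every $p \in P$, then $\gr{\beta_0}$ and $\gr{\beta_1}$ are homotopic, via a homotopy that moreover fixes each point of $\gr{\{\,p \in P \mid \beta_0(p) = \beta_1(p)\,\}}$ at every time. I would take this as a black box, since it is the standard ingredient behind most results of this shape (and is the essence of what the cited source provides).

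First I would handle the closure case. Let $i \colon \alpha(P) \hookrightarrow P$ be the subposet inclusion, and let $\alpha' \colon P \to \alpha(P)$ be the corestriction of $\alpha$ to its image, so that $\alpha = i \circ \alpha'$. Idempotency $\alpha^2 = \alpha$ forces $\alpha(q) = q$ for every $q \in \alpha(P)$, hence $\alpha' \circ i = \id_{\alpha(P)}$ and so $\gr{\alpha'} \circ \gr{i} = \id_{\gr{\alpha(P)}}$. Extensivity $p \leq \alpha(p)$ reads $\id_P \leq i \circ \alpha'$ as a comparison of monotone self-maps of $P$, with equality precisely on the subposet $\alpha(P)$. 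Feeding this comparison into the order homotopy principle produces a homotopy from $\id_{\gr{P}}$ to $\gr{i} \circ \gr{\alpha'}$ that pins $\gr{\alpha(P)}$ pointwise throughout, which is exactly the data required for $\alpha(P)$ to be a strong deformation retract of $P$.

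The interior operator case is formally dual: the inequality flips to $i \circ \alpha' \leq \id_P$, still with equality on $\alpha(P)$, so the order homotopy principle again furnishes a homotopy from $\gr{i} \circ \gr{\alpha'}$ back to $\id_{\gr{P}}$ fixing $\gr{\alpha(P)}$. In both cases the combination of $\gr{\alpha'} \circ \gr{i} = \id_{\gr{\alpha(P)}}$ with the homotopy completes the argument. The one piece of nontrivial content is the ``fixes the equaliser'' refinement of the order homotopy principle, which is the step I expect to be the main obstacle if one were to prove it from scratch; however, this refinement is a standard consequence of the prism decomposition of $\gr{P} \times [0,1]$ into simplices (the homotopy is affine on each prism, hence constant on the subprism lying over the equaliser), so in the write-up I would simply cite the result, as the statement in the paper already does.
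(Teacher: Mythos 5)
The paper does not prove this proposition; it cites it as a known result from Bj\"orner's handbook (Corollary~10.12 and the following sentence) and uses it as a black box. Your reconstruction via the order-homotopy (prism) principle---comparable monotone self-maps yield a homotopy that is constant on the subposet where they agree, which applied to $\id_P$ and $i\circ\alpha'$ together with $\alpha'\circ i=\id_{\alpha(P)}$ exhibits the strong deformation retraction---is correct and is exactly the standard argument behind that citation.
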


\subsection{The proof of \cref{thm:contractible}}\label{subsec:proof-of-contractibility}
We first extend \cref{def:norm-h,def:Gh} to set-valued homomorphisms $\varphi\colon G\pto \Pi H$ in $E_f$.
\begin{definition}[{Cf.~\cref{def:norm-h}}]\label{def:norm-phi}
    For any set-valued homomorphism $\varphi\colon G\pto \Pi H$ and a vertex $u\in V(G)$, define 
    \[
    \len(\varphi,u)=\max\{\,\len(\xi)\mid\xi\in\varphi(u)\,\}
    \]
    and 
    \[
    \norm{\varphi}=\sum_{u\in V(G)}\len(\varphi,u).\qedhere
    \]
\end{definition}

\begin{lemma}\label{lem:type-well-defined-1}
    Let $\xi,\xi',\eta\in V(\Pi H)$. 
    Suppose that $\xi\eta,\xi'\eta\in E(\Pi H)$, $\len(\xi)=\len(\xi')\geq 2$, and $s(\xi)=s(\xi')$.
    Then the types of $(\xi,\eta)$ and $(\xi',\eta)$ coincide.
\end{lemma}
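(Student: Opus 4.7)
The plan is to carry out a direct case analysis using the characterization of adjacency in \cref{prop:adjacency-cases} together with the well-definedness of type in \cref{prop:adjacency-walk-cases}. Since $\len(\xi) = \len(\xi') \geq 2$, neither $(\xi,\eta)$ nor $(\xi',\eta)$ can have type (A5), and by \cref{prop:adjacent-walk-parity} we know $\len(\eta)$ has the same parity as $\len(\xi)$ and $\len(\xi')$. Inspecting (A1)--(A4), the value of $\len(\eta) - \len(\xi)$ is $+2$, $0$, $0$, or $-2$ according as the type is (A1), (A2), (A4), or (A3), respectively; the analogous statement holds for $(\xi',\eta)$.

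First I would dispose of the cases $\len(\eta) = \len(\xi) + 2$ and $\len(\eta) = \len(\xi) - 2$. In these cases the type of $(\xi,\eta)$ is forced to be (A1) or (A3), respectively, and since $\len(\xi') = \len(\xi)$, the very same relations between $\len(\xi')$ and $\len(\eta)$ hold, so the type of $(\xi',\eta)$ is forced to be identical. This part is immediate from reading off the lengths in \cref{prop:adjacency-cases}.

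The remaining and more delicate case is $\len(\eta) = \len(\xi) = \len(\xi') = \ell \geq 2$, where each of the two types could a priori be (A2) or (A4). I would write $\xi = (x_0,\dots,x_\ell)$, $\xi' = (x'_0,\dots,x'_\ell)$, and $\eta = (y_0,\dots,y_\ell)$, using the hypothesis $x_0 = s(\xi) = s(\xi') = x'_0$. Suppose for contradiction that the types differ; by symmetry I may assume $(\xi,\eta)$ has type (A2) and $(\xi',\eta)$ has type (A4). From (A2) applied to $(\xi,\eta)$ I read off $y_1 = x_2$, while from (A4) applied to $(\xi',\eta)$ I read off $y_1 = x'_0 = x_0$. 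Hence $x_0 = x_2$, contradicting the fact that $\xi$ is a reduced walk (which requires $x_0 \neq x_2$ since $\ell \geq 2$). The symmetric assumption—type (A4) for $(\xi,\eta)$ and type (A2) for $(\xi',\eta)$—yields $x'_0 = x'_2$ by the analogous argument, contradicting reducedness of $\xi'$.

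The only genuine obstacle is the last case; the easy cases are truly just bookkeeping on lengths. The key insight making the last case work is that requiring $s(\xi) = s(\xi')$ pins down a shared first vertex, so the two ways $\eta$ could sit relative to $\xi$ (shifted forward or shifted backward) each determine $y_1$ in an incompatible manner, and incompatibility is detected precisely by the reducedness condition $x_0 \neq x_2$.
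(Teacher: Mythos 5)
Your proof is correct and takes essentially the same approach as the paper: reduce to the case $\len(\xi)=\len(\xi')=\len(\eta)$, then use $s(\xi)=s(\xi')$ and reducedness ($x_0\neq x_2$) to show that the two remaining types (A2) and (A4) cannot occur for $(\xi,\eta)$ and $(\xi',\eta)$ respectively. The paper phrases the final step as a direct exclusion rather than a contradiction, but the computation ($y_1 = x_2 \neq x_0 = x'_0$ in one direction, and symmetrically in the other) is identical.
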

\begin{proof}
    It suffices to consider the case where $\len(\xi)=\len(\xi')=\len(\eta)$. Let $\ell$ be the common length of $\xi,\xi'$, and $\eta$, and let $\xi=(x_0,\dots,x_\ell)$, $\xi'=(x'_0,\dots, x'_\ell)$, and $\eta=(y_0,\dots, y_\ell)$. 
    We have $x_0=x'_0$. 
    First suppose that the type of $(\xi,\eta)$ is (A2). Then we have $x_0\neq x_2=y_1$. Therefore we have $x'_0\neq y_1$ and the type of $(\xi',\eta)$ cannot be (A4). Hence the type of  $(\xi',\eta)$ must be (A2). 
    Next suppose that the type of $(\xi,\eta)$ is (A4). Then we have $x_0=y_1$. Therefore we have $x'_2\neq x'_0=y_1$ and the type of $(\xi',\eta)$ cannot be (A2). Hence the type of  $(\xi',\eta)$ must be (A4). 
\end{proof}

\begin{corollary}\label{cor:type-well-defined}
    Let $\varphi\colon G\pto \Pi H$ be a set-valued homomorphism with $\varphi\in \Hom (G,\Pi H)_f$, and
    $u,v\in V(G)$ a pair of adjacent vertices in $G$ such that $\len(\varphi,u)\geq 2$. Then, for any $\xi,\xi'\in \varphi(u)$ with $\len(\xi)=\len(\xi')=\len(\varphi,u)$ and any $\eta\in \varphi(v)$, the types of $(\xi,\eta)$ and $(\xi',\eta)$ coincide.
\end{corollary}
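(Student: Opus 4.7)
The plan is to derive this corollary as a direct application of \cref{lem:type-well-defined-1}, so the task is essentially to verify that the hypotheses of that lemma are satisfied in the present setting. All three conditions required by the lemma follow quickly from the assumptions of the corollary together with the definitions in \cref{subsec:graph-hom} and \cref{subsec:def-of-pf}.

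First, I would note that since $\varphi\colon G\pto \Pi H$ is a set-valued homomorphism and $uv\in E(G)$, the definition of a set-valued homomorphism (\cref{subsec:graph-hom}) says that every element of $\varphi(u)$ is adjacent in $\Pi H$ to every element of $\varphi(v)$. Applying this to $\xi,\xi'\in\varphi(u)$ and $\eta\in\varphi(v)$ gives $\xi\eta,\xi'\eta\in E(\Pi H)$. Second, the assumption that $\len(\xi)=\len(\xi')=\len(\varphi,u)$ together with $\len(\varphi,u)\geq 2$ yields $\len(\xi)=\len(\xi')\geq 2$. Third, the assumption $\varphi\in\Hom(G,\Pi H)_f$ means $s\varphi=f$, i.e., every walk in $\varphi(u)$ has source $f(u)$; in particular $s(\xi)=s(\xi')=f(u)$.

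With these three conditions in hand, \cref{lem:type-well-defined-1} applies directly to the triple $(\xi,\xi',\eta)$ and yields that the types of $(\xi,\eta)$ and $(\xi',\eta)$ coincide. There is no real obstacle here: all the content is packed into \cref{lem:type-well-defined-1}, and the corollary is a straightforward translation from the intrinsic setup of reduced walks in $\Pi H$ to the setting of set-valued homomorphisms. One only needs to be careful to cite the correct definitional facts (namely, the definition of set-valued homomorphism and the defining condition $s\varphi=f$ for $\Hom(G,\Pi H)_f$).
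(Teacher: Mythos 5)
Your proposal is correct and matches the paper's intent exactly: the paper states \cref{cor:type-well-defined} without a separate proof precisely because it is a direct application of \cref{lem:type-well-defined-1}, and you have verified all three of the lemma's hypotheses cleanly (adjacency in $\Pi H$ from the definition of set-valued homomorphism, the length condition from $\len(\varphi,u)\geq 2$, and $s(\xi)=s(\xi')=f(u)$ from $s\varphi=f$).
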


\begin{corollary}\label{cor:Gh1=Gh2} 
    Let $\varphi\colon G\pto \Pi H$ be a set-valued homomorphism with $\varphi\in \Hom (G,\Pi H)_f$.
    Then for any pair of graph homomorphisms $h_1,h_2\colon G\to \Pi H$ such that $h_1\leq \varphi$, $h_2\leq \varphi$, and $\norm{h_1}=\norm{h_2}=\norm{\varphi}$, we have $\overrightarrow{G}_{h_1}=\overrightarrow{G}_{h_2}$.
\end{corollary}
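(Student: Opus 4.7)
The plan is to deduce the equality in two steps: first check that the vertex sets coincide, which follows purely from the norm equality, and then check that the arc sets coincide by applying \cref{cor:type-well-defined} twice and combining it with a simple observation about reversing the order of an adjacent pair.

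For the vertex sets, I would note that $h_i\leq \varphi$ implies $h_i(u)\in \varphi(u)$, so $\len\bigl(h_i(u)\bigr)\leq \len(\varphi,u)$ for every $u\in V(G)$. Summing over $V(G)$ and using $\norm{h_i}=\norm{\varphi}$ forces equality at each vertex, so $\len\bigl(h_i(u)\bigr)=\len(\varphi,u)$ for every $u$ and both $i$. Hence $V(\overrightarrow{G}_{h_1})=V(\overrightarrow{G}_{h_2})=\bigl\{\,u\in V(G)\mid \len(\varphi,u)\geq 2\,\bigr\}$.

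For the arcs, fix an edge $uv\in E(G)$ with both endpoints in this common vertex set. Whether the edge is oriented $(u,v)$ or $(v,u)$ is determined by whether the type of the ordered pair $\bigl(h_i(u),h_i(v)\bigr)$ lies in $\{\text{(A1),(A2)}\}$ or in $\{\text{(A3),(A4)}\}$. By \cref{cor:type-well-defined}, applied with $\xi=h_1(u)$, $\xi'=h_2(u)$ (both elements of $\varphi(u)$ of maximal length) and $\eta=h_1(v)$, the types of $\bigl(h_1(u),h_1(v)\bigr)$ and $\bigl(h_2(u),h_1(v)\bigr)$ coincide. Applying the same corollary with the roles of $u$ and $v$ swapped, to $\eta=h_1(v)$, $\eta'=h_2(v)$, and $\xi=h_2(u)$, I conclude that the types of $\bigl(h_1(v),h_2(u)\bigr)$ and $\bigl(h_2(v),h_2(u)\bigr)$ coincide.

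It remains to observe that the type is well-behaved under reversal of the pair: direct inspection of conditions (A1)--(A4) shows that for any adjacent ordered pair of reduced walks of length $\geq 2$, reversing the pair swaps (A1) with (A3) and (A2) with (A4). Applying this to the second equality gives that the types of $\bigl(h_2(u),h_1(v)\bigr)$ and $\bigl(h_2(u),h_2(v)\bigr)$ coincide, and chaining this with the first equality yields equality of the types of $\bigl(h_1(u),h_1(v)\bigr)$ and $\bigl(h_2(u),h_2(v)\bigr)$. Thus the orientation of every edge agrees in $\overrightarrow{G}_{h_1}$ and $\overrightarrow{G}_{h_2}$. I expect no serious obstacle: the only nontrivial input is \cref{cor:type-well-defined}, and the reversal observation is a one-line case check on the definitions in \cref{prop:adjacency-cases}.
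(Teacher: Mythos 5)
Your proof is correct and takes essentially the same approach as the paper: the vertex sets coincide because the norm equality forces $\len\bigl(h_i(u)\bigr)=\len(\varphi,u)$ pointwise, and the arc sets coincide via \cref{cor:type-well-defined}. The paper's proof is terser---for the arc sets it merely cites \cref{cor:type-well-defined}---and your two applications of that corollary, bridged by the reversal observation that swapping an ordered pair of adjacent reduced walks of length at least $2$ exchanges (A1) with (A3) and (A2) with (A4), supply exactly the details the paper leaves implicit.
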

\begin{proof}
    For each $u\in V(G)$ and $i\in \{1,2\}$, $h_i(u)$ is an element of $\varphi(u)$ with $\len\bigl(h_i(u)\bigr)=\len(\varphi,u)$. Therefore we have $V(\overrightarrow{G}_{h_1})=V(\overrightarrow{G}_{h_2})$.
    We also have $A(\overrightarrow{G}_{h_1})=A(\overrightarrow{G}_{h_2})$ by \cref{cor:type-well-defined}. 
\end{proof}

\begin{definition}[{Cf.~\cref{def:Gh}}]\label{def:Gphi}
    For a set-valued homomorphism $\varphi\colon G\pto \Pi H$ with $\varphi\in \Hom(G,\Pi H)_f$, we define the digraph $\overrightarrow{G}_\varphi$ as $\overrightarrow{G}_h$, where $h\colon G\to \Pi H$ is a graph homomorphism satisfying $h\leq\varphi$ and $\norm{h}=\norm{\varphi}$; thanks to \cref{cor:Gh1=Gh2}, this definition does not depend on the choice of $h$.

    More explicitly, the set $V(\overrightarrow{G}_\varphi)$ of vertices of $\overrightarrow{G}_\varphi$ is given by 
    \[
    V(\overrightarrow{G}_\varphi)=\{\,u\in V(G)\mid \len(\varphi,u)\geq 2\,\},
    \]
    and the set $A(\overrightarrow{G}_\varphi)$ of arcs of $\overrightarrow{G}_\varphi$ is given by 
    \begin{multline*}
    A(\overrightarrow{G}_\varphi)=\{\,(u,v)\in V(\overrightarrow{G}_\varphi)\times V(\overrightarrow{G}_\varphi) \mid \text{$uv\in E(G)$ and for each $\xi\in \varphi(u)$ with $\len(\xi)=\len(\varphi,u)$}\\
    \text{and each $\eta\in \varphi(v)$ with $\len(\eta)=\len(\varphi,v)$, the type of $(\xi,\eta)$ is either (A1) or (A2)}\,\}.
    \end{multline*}
\end{definition}

Now we move to the proof of contractibility of $E_f$.
The basic strategy of the proof is as follows. We decompose $E_f$ as a chain of subposets 
\begin{equation}\label{eqn:Ef-as-union-of-Ys}
    \{\id_f\}=Y_0\subseteq Y_1\subseteq \cdots,\qquad E_f=\bigcup_{p\in \mathbb{N}} Y_p
\end{equation}
and show that $Y_p$ is a strong deformation retract of $Y_{p+1}$ for each $p\in\mathbb{N}$. This implies (by a standard argument as in \cite[Proof of Proposition~1A.1]{Hat02}) that $E_f$ is contractible. 
The decomposition \cref{eqn:Ef-as-union-of-Ys} is obtained via a few steps. 

Let $\{\,\omega_1,\omega_2,\dots, \omega_M\,\}$ be the set of all paths (i.e., (reduced) walks without repeated vertices) in $G$. Moreover, assume that we number the paths in $G$ so that the condition
\begin{equation}\label{eqn:numbering-condition}
    \text{for each $1\leq i\leq j\leq M$, we have $\len(\omega_i)\leq \len(\omega_j)$}
\end{equation}
is satisfied.

For each $n\in \mathbb{N}$, let $X_n$ be the subposet of $E_f$ defined by 
\[
X_n=\bigl\{\, \varphi\in E_f \;\big\vert\; \text{for each $u\in V(G)$, we have $\len(\varphi,u)\leq 2n$} \,\bigr\}.
\]
This defines a chain 
\begin{equation}\label{eqn:chain-Xn}
    X_0\subseteq X_1\subseteq \cdots,\qquad E_f=\bigcup_{n\in \mathbb{N}} X_n,
\end{equation}
and we have $X_0=\{\id_f\}$.

For each $n\in\mathbb{N}\setminus \{0\}$ and each $\varphi\in X_n$, let $\overrightarrow{G}_{\varphi,2n}$ be the induced subdigraph of $\overrightarrow{G}_\varphi$ consisting of all vertices $u\in V(G)$ with $\len(\varphi,u)=2n$. 
By \cref{lem:directed-cycle-is-f-tight} and \cref{prop:univ-cover-explicitly}, $\overrightarrow{G}_{\varphi,2n}$ does not contain any directed cycle. 
For each $i\in\mathbb{N}$ with $0\leq i\leq M$, let $X_{n,i}$ be the subposet of $X_n$ defined by 
\begin{equation*}
    X_{n,i}=\bigl\{\, \varphi\in X_{n} \;\big\vert\; \text{$\overrightarrow{G}_{\varphi,2n}$ contains none of $\omega_{i+1},\omega_{i+2},\dots,\omega_{M}$ as a directed walk} \,\bigr\}.
\end{equation*}
Notice that $\varphi\in X_{n,0}$ holds if and only if $\overrightarrow{G}_{\varphi,2n}$ is the empty digraph, i.e., $\varphi\in X_{n-1}$. 
Moreover, we have $X_{n,M}=X_n$.
Thus we have 
\begin{equation}\label{eqn:chain-Xnm}
    X_{n-1}=X_{n,0}\subseteq X_{n,1}\subseteq \cdots\subseteq X_{n,M}=X_n.
\end{equation}

The chain \cref{eqn:Ef-as-union-of-Ys} is obtained by refining the chain \cref{eqn:chain-Xn} by \cref{eqn:chain-Xnm}.
Therefore our goal is to show the following: for each $n\in \mathbb{N}\setminus\{0\}$ and each $i\in \mathbb{N}$ with $1\leq i\leq M$, the subposet $X_{n,i-1}$ of $X_{n,i}$ is a strong deformation retract of $X_{n,i}$.
We do so by applying \cref{prop:closure-sdr}. More precisely, we define a closure operator $U=U_{n,i}\colon X_{n,i}\to X_{n,i}$ and an interior operator $D=D_{n,i}\colon U(X_{n,i})\to U(X_{n,i})$ such that $DU(X_{n,i})=X_{n,i-1}$.

We move to the definitions of $U$ and $D$. Let $\omega_{i}=(v_0,v_1,\dots,v_k)$. Observe that, for any $\varphi\in X_{n,i}\setminus X_{n,i-1}$, the digraph $\overrightarrow{G}_{\varphi,2n}$ contains the path $\omega_i$ as a \emph{maximal} directed walk in $\overrightarrow{G}_{\varphi,2n}$ by \cref{eqn:numbering-condition}, and hence $v_k$ is a sink in $\overrightarrow{G}_{\varphi,2n}$. By \cref{lem:Gh-len}, $v_k$ is also a sink in $\overrightarrow{G}_\varphi$.

First, $U\colon X_{n,i}\to X_{n,i}$ is defined as follows.
For each $\varphi \in X_{n,i-1}\subseteq X_{n,i}$, we set $U(\varphi) = \varphi$. Suppose $\varphi \in X_{n,i}\setminus X_{n,i-1}$. 
Take $\xi\in \varphi(v_k)$ with $\len(\xi)=2n$, and let $\xi=(x_0,\dots, x_{2n})$. 
Define $\xi^\ast=(x_0,\dots, x_{2n-2})$, and 
define the function $U(\varphi)\colon V(G)\to \Pow\bigl(V(\Pi H)\bigr)\setminus\{\emptyset\}$ as 
\begin{equation}\label{eqn:def-of-U}
    U(\varphi)(u)=\begin{cases}
    \varphi(v_k)\cup \bigl\{\xi^\ast\bigr\} &\text{if $u=v_k$, and}\\
    \varphi(u) &\text{if $u\neq v_k$.}
\end{cases}
\end{equation}
It turns out that $\xi^\ast$ does not depend on the choice of $\xi\in \varphi(v_k)$; see \cref{lem:if-not-singleton-then-sink} and (a) below. Clearly, $\varphi \le U(\varphi)$ and $U^2(\varphi) = U(\varphi)$ hold. Hence, it needs to be shown that 
\begin{itemize}
    \item[(a)] $U(\varphi)$ is a set-valued homomorphism $G\pto \Pi H$, and
    \item[(b)] $U\colon X_{n,i}\to X_{n,i}$ is monotone.
\end{itemize}

Next we define the map $D \colon U(X_{n,i}) \to U(X_{n,i})$ as follows. If $\varphi \in X_{n,i-1}\subseteq U(X_{n,i})$, then we set $D(\varphi) = \varphi$. Suppose $\varphi \in U(X_{n,i})\setminus X_{n,i-1}$. Then $\omega_{i}$ is a directed walk in $\overrightarrow{G}_{\varphi,2n}$. 
By the definition of $U$, $\varphi(v_k)=U(\varphi)(v_k)$ contains $\xi^\ast$ as in \cref{eqn:def-of-U}. Define the function $D(\varphi)\colon V(G)\to \Pow\bigl(V(\Pi H)\bigr)\setminus \{\emptyset\}$ by 
\[
D(\varphi)(u)=\begin{cases}
    \{\xi^\ast\} &\text{if $u=v_k$, and}\\
    \varphi(u) &\text{if $u\neq v_k$.}
\end{cases}
\]
Then $D(\varphi)$ is a set-valued homomorphism $G\pto \Pi H$ because we have $D(\varphi)\leq\varphi$ and $\varphi\colon G\pto \Pi H$. Also, we have $\len\bigl(D(\varphi),v_k\bigr)=2n-2$ and hence $\overrightarrow{G}_{D(\varphi),2n}$ is the induced subdigraph of $\overrightarrow{G}_{\varphi,2n}$ obtained by deleting the vertex $v_k$. Therefore we have $D(\varphi)\in X_{n,i-1}$. 
Since $D^2(\varphi) = D(\varphi)$ clearly holds, it remains to show that
\begin{itemize}
    \item[(c)] $D\colon U(X_{n,i})\to U(X_{n,i})$ is monotone.
\end{itemize}

The rest of this section is devoted to proofs of the three remaining claims (a), (b), and (c). 

\begin{lemma}\label{lem:if-not-singleton-then-sink}
Let $\varphi\in E_f$.
Suppose that $u\in V(G)$ is a vertex such that $\varphi(u)$ is not a singleton. Then $u$ is a sink in $\overrightarrow{G}_\varphi$ and there exists at most one $\chi\in \varphi(u)$ with $\len(\chi)<\len(\varphi,u)$.
\end{lemma}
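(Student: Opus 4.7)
The plan proceeds in three stages. First, I convert the non-singleton hypothesis on $\varphi(u)$ into singleton conclusions for each of its neighbors. The proof of \cref{prop:HomGt-discrete-fibration} shows that $t\colon\varphi(u)\to t\varphi(u)$ is bijective, so the hypothesis yields $|t\varphi(u)|\geq 2$. Applying \cref{lem:sq-free-singleton} to the set-valued homomorphism $t\varphi\colon G\pto H$ (whose codomain $H$ is square-free by assumption) then produces $z\in V(H)$ with $t\varphi(v)=\{z\}$ for every $v\in N_G(u)$. A further appeal to the bijectivity of $t$ on each $\varphi(v)$ forces $\varphi(v)=\{\eta_v\}$ to be a singleton with $t(\eta_v)=z$.

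Second, I analyze the possible lengths of elements of $\varphi(u)$. Fix any neighbor $v\in N_G(u)$, which exists because $G$ is connected with at least two vertices, and write $\eta_v=(y_0,\dots,y_m)$. The explicit inverse formula established inside the proof of \cref{prop:HomGt-discrete-fibration} says that
\[
\xi=\bigl(f(u),f(v)\bigr)\cdot \eta_v\cdot\bigl(z,t(\xi)\bigr)
\]
for every $\xi\in\varphi(u)$. Computing this reduced product case by case according to whether $f(u)=y_1$, I find $\len(\xi)\in\{m-2,m\}$ in that subcase (which forces $m\geq 2$) and $\len(\xi)\in\{m,m+2\}$ otherwise, and in each case the smaller of the two possible lengths is attained precisely when $t(\xi)$ equals one specific vertex of $H$. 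Since $t$ is a bijection on $\varphi(u)$, that specific target is taken by at most one element of $\varphi(u)$; this yields both $\len(\varphi,u)\geq 2$ (so that $u\in V(\overrightarrow{G}_\varphi)$) and the second assertion of the lemma.

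Finally, to see that $u$ is a sink of $\overrightarrow{G}_\varphi$, I consider any neighbor $v$ of $u$ with $v\in V(\overrightarrow{G}_\varphi)$, so that $m=\len(\eta_v)\geq 2$, and read off a reduced walk $\xi_0\in\varphi(u)$ of maximal length from the displayed formula. In the subcase $f(u)=y_1$ the walk $\xi_0$ has length $m$ and the form $\xi_0=(f(u),y_2,\dots,y_m,t(\xi_0))$, so the type of $(\xi_0,\eta_v)$ is (A4); in the subcase $f(u)\neq y_1$ the walk $\xi_0$ has length $m+2$ and the form $\xi_0=(f(u),f(v),y_1,\dots,y_m,t(\xi_0))$, so the type is (A3). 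In both subcases the type lies in $\{\text{(A3)},\text{(A4)}\}$, hence by \cref{cor:type-well-defined} and the definition of $A(\overrightarrow{G}_\varphi)$ we conclude $(u,v)\notin A(\overrightarrow{G}_\varphi)$, as required. The main obstacle is bookkeeping in the length computation---tracking which of the two possible cancellations in the reduced product occurs, and handling the degenerate values $m\in\{0,1\}$---but the resulting subcase split is entirely mechanical and feeds directly into the type computation above.
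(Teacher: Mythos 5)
Your proposal is correct, but it takes a genuinely different route from the paper's own argument. The paper's proof works directly with two distinct elements $\xi\neq\xi'$ of $\varphi(u)$ and a maximal $\eta\in\varphi(v)$: it assumes for contradiction that the type of $(\xi,\eta)$ is (A1) or (A2), then runs through the possible types of $(\xi',\eta)$ and eliminates them one by one, each elimination resting on the rigidity built into \cref{prop:adjacency-cases} (a walk adjacent to $\eta$ of a given type is determined by its source or its length). The second claim is proved the same way, with two hypothetical short elements $\xi',\xi''$. The paper obtains ``$\varphi(v)$ is a singleton'' only indirectly, in passing, as a by-product of the sink claim already proved. Your proof instead deploys the square-free hypothesis head-on: you apply \cref{lem:sq-free-singleton} to $t\varphi$ and combine it with the bijectivity of $t|_{\varphi(v)}$ (extracted from the proof of \cref{prop:HomGt-discrete-fibration}) to conclude at the outset that $\varphi(v)$ is a singleton $\{\eta_v\}$ for every neighbor $v$ of $u$, and then use the explicit inverse formula $\xi=\bigl(f(u),f(v)\bigr)\cdot\eta_v\cdot\bigl(z,t(\xi)\bigr)$ to compute lengths and types directly. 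Both routes are valid. Yours is more explicit and makes the role of square-freeness transparent (the paper's argument only uses square-freeness implicitly, through membership in $E_f$), at the cost of invoking more machinery up front and requiring a careful degenerate-case split. On that point, one small caveat: your parenthetical ``which forces $m\geq2$'' is slightly imprecise on its own---what actually rules out $m=1$ is that $\len(\eta_v)$ must be even because $\varphi\in E_f$ (\cref{prop:univ-cover-explicitly}~(1)), combined with the fact that the subcase $f(u)=y_1$ presupposes $m\geq1$. You do flag the degenerate values $m\in\{0,1\}$ as needing attention, so this is a minor point, but in a full write-up the evenness constraint should be stated explicitly at that step.
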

\begin{proof}
First observe that we have $\len(\varphi,u)\geq 2$ by \cref{prop:univ-cover-explicitly} (1), and hence $u\in V(\overrightarrow{G}_{\varphi})$.

Take $\xi,\xi'\in \varphi(u)$ with $\xi\neq\xi'$ and $\len(\xi)=\len(\varphi,u)$, and let $v\in V(G)$ be a vertex adjacent to $u$. Take $\eta\in \varphi(v)$ with $\len(\eta)=\len(\varphi,v)$. 
In order to show that $u$ is a sink in $\overrightarrow{G}_\varphi$, it suffices to show that the type of $(\xi,\eta)$ is neither (A1) nor (A2). 

If the type of $(\xi,\eta)$ is (A1), then we have $\len(\xi)+2=\len(\eta)$. Because $\len(\xi')\leq \len(\varphi,u)=\len(\xi)$, in order for $\xi'$ to be adjacent to $\eta$, we must have $\len(\xi')=\len(\xi)$ and the type of $(\xi',\eta)$ must also be (A1). However, this is impossible because we have $\xi\neq \xi'$. 

Suppose that the type of $(\xi,\eta)$ is (A2), and let $\ell=\len(\xi)=\len(\eta)\geq 2$. Let $\xi=(x_0,\dots,x_\ell)$ and $\eta=(y_0,\dots,y_\ell)$. Then we have $y_1=x_2\neq x_0=f(u)=s(\xi')$. 
By examining the possible types of $(\xi',\eta)$, we deduce a contradiction as follows. 
The type of $(\xi',\eta)$ cannot be (A5) since $\len(\eta)\geq 2$. Neither can it be (A1) nor (A4), by $y_1\neq s(\xi')$. The type cannot be (A3) because we have $\len(\xi')\leq \len(\xi)=\len(\eta)$. The remaining possibility is that the type of $(\xi',\eta)$ is also (A2), but this is impossible as well because we have $s(\xi)=f(u)=s(\xi')$ and $\xi\neq \xi'$.

We move to the proof of the second claim. Suppose to the contrary that we have $\xi,\xi',\xi''\in \varphi(u)$ with $\len(\xi)=\len(\varphi,u)$, $\len(\xi')<\len(\xi)$, $\len(\xi'')<\len(\xi)$, and $\xi'\neq \xi''$. Let $v\in V(G)$ be a vertex adjacent to $u$ and take $\eta\in \varphi(v)$; in fact, what we have just shown implies $\varphi(v)=\{\eta\}$, because $v$ is not a sink in $\overrightarrow{G}_\varphi$. Then $\eta$ must be adjacent to all of $\xi$, $\xi'$, and $\xi''$. 
By the above argument, the type of $(\xi,\eta)$ is either (A3) or (A4). 

Suppose that the type of $(\xi,\eta)$ is (A3). Then we have $\len(\xi')=\len(\xi'')=\len(\eta)$; let $\ell$ be the common length. By $\xi'\neq\xi''$ and $s(\xi')=f(u)=s(\xi'')$, we have $\ell\geq 2$. Thus the types of $(\xi',\eta)$ and $(\xi'',\eta)$ must both be (A2), but this is not possible. 

Finally, suppose that the type of $(\xi,\eta)$ is (A4). Then the types of $(\xi',\eta)$ and $(\xi'',\eta)$ must both be (A1), which is impossible.
\end{proof}

We show (a). Let $w\in V(G)$ be a vertex adjacent to $v_k$. Then by \cref{lem:if-not-singleton-then-sink}, $\varphi(w)$ is a singleton. Let us denote the unique element of $\varphi(w)$ by $\eta$. 
Take any $\xi\in\varphi(v_k)$ with $\len(\xi)=2n$. Then $\xi$ and $\eta$ are adjacent, and the type of $(\xi,\eta)$ is either (A3) or (A4) since $v_k$ is a sink in $\overrightarrow{G}_{\varphi}$.
It follows that $\xi^\ast$ as in \cref{eqn:def-of-U} and $\eta$ are also adjacent, the type of $(\xi^\ast,\eta)$ being either (A1), (A2), or (A5).
This completes the proof of (a).

\begin{lemma}\label{lem:Gphi2n-subgraph}
    Let $\varphi,\psi\in E_f$.
    Suppose that we have $\varphi\leq \psi$ and $\psi\in X_n$. Then $\overrightarrow{G}_{\varphi,2n}$ is an induced subdigraph of $\overrightarrow{G}_{\psi,2n}$, and all vertices in $V(\overrightarrow{G}_{\psi,2n})
    \setminus V(\overrightarrow{G}_{\varphi,2n})$ are sinks in $\overrightarrow{G}_{\psi}$ (and hence are also sinks in $\overrightarrow{G}_{\psi,2n}$).
\end{lemma}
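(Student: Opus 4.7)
The plan is to verify the two assertions separately by direct unfolding of the definitions, using \cref{cor:type-well-defined} and \cref{lem:if-not-singleton-then-sink} as the main tools.

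For the induced subdigraph claim, I would first establish the vertex inclusion $V(\overrightarrow{G}_{\varphi,2n}) \subseteq V(\overrightarrow{G}_{\psi,2n})$: if $\len(\varphi,u) = 2n$, then since $\varphi(u) \subseteq \psi(u)$ we get $\len(\psi,u) \geq 2n$, and combining with $\psi \in X_n$ forces equality. Then, for $u,v \in V(\overrightarrow{G}_{\varphi,2n})$ with $uv \in E(G)$, the maximum-length walks of $\varphi$ at $u$ and $v$ both have length $2n$, and are also among the maximum-length walks of $\psi$ at $u$ and $v$ (which also have length $2n$). Hence the type condition that defines arcs in $\overrightarrow{G}_\varphi$ and in $\overrightarrow{G}_\psi$ reduces to a property of the same pairs of reduced walks in $\Pi H$, so $(u,v)\in A(\overrightarrow{G}_\varphi)$ iff $(u,v)\in A(\overrightarrow{G}_\psi)$, i.e.\ the arcs between such vertices coincide.

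For the sink claim, suppose $w \in V(\overrightarrow{G}_{\psi,2n}) \setminus V(\overrightarrow{G}_{\varphi,2n})$. Then $\len(\psi,w) = 2n$ but $\len(\varphi,w) < 2n$, so $\psi(w)$ contains a walk of length $2n$ while every element of the non-empty $\varphi(w) \subseteq \psi(w)$ has length strictly less than $2n$. In particular $\psi(w)$ has at least two distinct elements, so \cref{lem:if-not-singleton-then-sink} gives that $w$ is a sink in $\overrightarrow{G}_\psi$, hence also in its induced subdigraph $\overrightarrow{G}_{\psi,2n}$.

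The only real delicacy lies in the arc-matching step: one needs the fact that the defining condition for an arc is genuinely a property of the ordered pair $(u,v)$ of vertices and not sensitive to which maximum-length witnesses are chosen, so that one can freely swap between witnesses lying in $\varphi$ and witnesses lying in $\psi$. This is precisely \cref{cor:type-well-defined} (applied to $\psi$), and with it in hand I do not expect any substantive obstacle.
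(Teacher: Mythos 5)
Your proposal is correct and takes essentially the same route as the paper's (terse) proof: the paper phrases the arc-matching step via the graph homomorphisms $h,k$ underlying $\overrightarrow{G}_\varphi,\overrightarrow{G}_\psi$ and \cref{cor:Gh1=Gh2}, whereas you work directly with maximum-length witnesses and \cref{cor:type-well-defined}, but these are just two formulations of the same well-definedness fact; the sink claim is handled identically via \cref{lem:if-not-singleton-then-sink}.
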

\begin{proof}
    That $\overrightarrow{G}_{\varphi,2n}$ is an induced subdigraph of $\overrightarrow{G}_{\psi,2n}$ follows from \cref{def:Gphi}; we have $\overrightarrow{G}_{\varphi}=\overrightarrow{G}_h$ and $\overrightarrow{G}_\psi=\overrightarrow{G}_k$ for some graph homomorphisms $h,k\colon G\to \Pi H$ with $h(u)=k(u)$ for all $u\in V(\overrightarrow{G}_{\varphi,2n})$. All vertices in $V(\overrightarrow{G}_{\psi,2n})
    \setminus V(\overrightarrow{G}_{\varphi,2n})$ are sinks in $\overrightarrow{G}_{\psi}$ by \cref{lem:if-not-singleton-then-sink}.
\end{proof}

Now we can prove (b). 
Take $\varphi,\psi\in X_{n,i}$ with $\varphi\leq \psi$. If $U(\varphi)=\varphi$, then we have $U(\varphi)=\varphi\leq \psi\leq U(\psi)$. Therefore let us assume $\varphi < U(\varphi)$. Then we have $\varphi\notin X_{n,i-1}$.
It follows from $\varphi\leq \psi$ and \cref{lem:Gphi2n-subgraph} that $\overrightarrow{G}_{\varphi,2n}$ is an induced subdigraph of $\overrightarrow{G}_{\psi,2n}$. Therefore $\overrightarrow{G}_{\psi,2n}$ also contains the directed walk $\omega_{i}$, i.e., $\psi\notin X_{n,i-1}$. 
Thus we have 
\[
U(\psi)(v_k)=\psi(v_k)\cup\{\xi^\ast\}\supseteq \varphi(v_k)\cup\{\xi^\ast\}=U(\varphi)(v_k)
\]
in the notation of \cref{eqn:def-of-U}, and hence $U(\varphi)\leq U(\psi)$. 

Finally we show (c). Take $\varphi,\psi\in U(X_{n,i})$ with $\varphi\leq \psi$, and suppose that $D(\psi)<\psi$. This implies $\psi\in U(X_{n,i})\setminus X_{n,i-1}$, i.e., that the digraph $\overrightarrow{G}_{\psi,2n}$ contains $\omega_{i}$ as a maximal directed walk. 
Let $\xi^\ast$ be the reduced walk as in \cref{eqn:def-of-U}, defined using $\xi\in \psi(v_k)$ with $\len(\xi)=2n$.
We have $\xi^\ast\in \psi(v_k)$ by $\psi\in U(X_{n,i})$, and $\xi^\ast$ is the unique element of $\psi(v_k)$ satisfying $\len(\xi^\ast)<2n$ by \cref{lem:if-not-singleton-then-sink}. 
Since $v_k$ is a sink in $\overrightarrow{G}_{\psi,2n}$,
it follows from \cref{lem:Gphi2n-subgraph} that either 
\begin{itemize}
    \item $\overrightarrow{G}_{\varphi,2n}$ does not contain $v_k$, or
    \item $\overrightarrow{G}_{\varphi,2n}$ contains $\omega_{i}$ as a directed walk.
\end{itemize}
In the former case, we have $\len(\varphi,v_k)<2n$ and hence $\varphi(v_k)=\{\xi^\ast\}$ by $\varphi\leq \psi$. Therefore we have $D(\varphi)=\varphi\leq D(\psi)$. In the latter case, we have $\varphi\in U(X_{n,i})\setminus X_{n,i-1}$ and hence again $D(\varphi)\leq D(\psi)$.

This completes the proof of \cref{thm:contractible}. 
In particular, $E_f$ is simply-connected and hence $p_f\colon E_f\to B_f$ is the universal cover of $B_f$.

%%%
\section{The fundamental group of $B_f$}
\label{sec:fundamental-group}

The aim of this section is to determine the automorphism group $\Aut(p_f)$ of the universal cover $p_f\colon E_f\to B_f$ of $B_f$, where $f\colon G\to H$ is a homomorphism between graphs $G$ and $H$ satisfying the assumption of \cref{thm:main}. As explained in \cref{sec:intro}, this enables us to determine the homotopy type of $B_f$.

We begin with general observations. Thus in \cref{subsec:algebraic-str-HomGPiH,subsec:relation-to-Wrochna}, $G$ and $H$ denote general graphs. 

\subsection{An algebraic structure on $\Hom(G,\Pi H)$}\label{subsec:algebraic-str-HomGPiH}
We start by observing that the poset $\Hom(G,\Pi H)$ has a natural algebraic structure, given by the following ``partial multiplication.'' 
\begin{proposition}\label{prop:composition-in-HomGPiH}
    Let $G$ and $H$ be graphs, and $\varphi_1,\varphi_2\colon G\pto \Pi H$ set-valued homomorphisms with $t\varphi_1=s\varphi_2\colon G\pto H$.
    Then the function $\varphi_1\cdot\varphi_2\colon V(G)\to \Pow\bigl(V(\Pi H)\bigr)\setminus\{\emptyset\}$, defined as 
    \begin{equation}
    \label{eqn:composition-in-HomGPiH}
    (\varphi_1\cdot\varphi_2)(u)=\{\,\xi_1\cdot \xi_2\mid \text{$\xi_1\in \varphi_1(u)$, $\xi_2\in \varphi_2(u)$, and $t(\xi_1)=s(\xi_2)$}\,\}
    \end{equation}
    for any $u\in V(G)$, is a set-valued homomorphism $G\pto \Pi H$ satisfying $s(\varphi_1\cdot \varphi_2)=s\varphi_1$ and $t(\varphi_1\cdot\varphi_2)=t\varphi_2$.
\end{proposition}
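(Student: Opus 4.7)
The plan is to verify the three separate claims: (i) $\varphi_1\cdot\varphi_2$ takes nonempty values, (ii) it satisfies the set-valued homomorphism condition, and (iii) the source/target identities hold. Each will follow almost directly from what has already been developed, with \cref{prop:composition-on-PiH-is-graph-hom} doing the real combinatorial work.

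For nonemptiness at a fixed $u\in V(G)$, I would pick any $\xi_1\in\varphi_1(u)$ and use the assumption $t\varphi_1=s\varphi_2$ at $u$: since $t(\xi_1)\in t\varphi_1(u)=s\varphi_2(u)$, there exists some $\xi_2\in\varphi_2(u)$ with $s(\xi_2)=t(\xi_1)$, and then $\xi_1\cdot\xi_2\in(\varphi_1\cdot\varphi_2)(u)$. For the source and target identities, one just computes: every $\xi_1\cdot\xi_2\in(\varphi_1\cdot\varphi_2)(u)$ satisfies $s(\xi_1\cdot\xi_2)=s(\xi_1)$ and $t(\xi_1\cdot\xi_2)=t(\xi_2)$, so $s(\varphi_1\cdot\varphi_2)(u)\subseteq s\varphi_1(u)$ and the reverse inclusion follows by the same nonemptiness argument applied with a chosen $\xi_1$; the target case is symmetric.

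The main content is condition (ii). Take any $uv\in E(G)$, any $\xi_1\cdot\xi_2\in(\varphi_1\cdot\varphi_2)(u)$ with $\xi_1\in\varphi_1(u)$, $\xi_2\in\varphi_2(u)$, $t(\xi_1)=s(\xi_2)$, and any $\eta_1\cdot\eta_2\in(\varphi_1\cdot\varphi_2)(v)$ with the analogous data on the $v$-side. Because $\varphi_1\colon G\pto\Pi H$, the reduced walks $\xi_1$ and $\eta_1$ are adjacent in $\Pi H$; similarly $\xi_2$ and $\eta_2$ are adjacent. The two compatibility hypotheses $t(\xi_1)=s(\xi_2)$ and $t(\eta_1)=s(\eta_2)$ are exactly those of \cref{prop:composition-on-PiH-is-graph-hom}, which then yields that $\xi_1\cdot\xi_2$ and $\eta_1\cdot\eta_2$ are adjacent in $\Pi H$. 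This is precisely what is needed.

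The only spot that requires a moment of care is the nonemptiness step, since one must exploit the hypothesis $t\varphi_1=s\varphi_2$ in its strong form (equality of the two set-valued homomorphisms $G\pto H$), but this is immediate and not really an obstacle. In short, the proposition is a straightforward bookkeeping consequence of \cref{prop:composition-on-PiH-is-graph-hom}, which is where all the graph-theoretic work has already been done.
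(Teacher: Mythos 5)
Your proof is correct and follows essentially the same route as the paper's: nonemptiness and the source/target identities are routine consequences of $t\varphi_1=s\varphi_2$, and the adjacency condition is deduced by factoring $\xi=\xi_1\cdot\xi_2$, $\eta=\eta_1\cdot\eta_2$ and invoking \cref{prop:composition-on-PiH-is-graph-hom}. The only difference is that you spell out the nonemptiness and source/target verifications a bit more explicitly, which the paper compresses into a single sentence.
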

\begin{proof}
    The condition $t\varphi_1=s\varphi_2$ ensures that $(\varphi_1\cdot \varphi_2)(u)$ is nonempty for any $u\in V(G)$ and that $s(\varphi_1\cdot \varphi_2)=s\varphi_1$ and $t(\varphi_1\cdot\varphi_2)=t\varphi_2$ hold. 
    Given a pair of adjacent vertices $u,v\in V(G)$, we need to show that any $\xi\in (\varphi_1\cdot\varphi_2)(u)$ and any $\eta\in (\varphi_1\cdot\varphi_2)(v)$ are adjacent in $\Pi H$. 
    By the definition of $\varphi_1\cdot\varphi_2$, we have $\xi=\xi_1\cdot \xi_2$ for some $\xi_1\in \varphi_1(u)$ and $\xi_2\in \varphi_2(u)$ with $t(\xi_1)=s(\xi_2)$, and similarly we have $\eta=\eta_1\cdot\eta_2$ for some
    $\eta_1\in \varphi_1(v)$ and $\eta_2\in\varphi_2(v)$ with $t(\eta_1)=s(\eta_2)$. 
    Since $\varphi_1$ is a set-valued homomorphism, $\xi_1$ and $\eta_1$ are adjacent in $\Pi H$, and similarly, since $\varphi_2$ is a set-valued homomorphism, $\xi_2$ and $\eta_2$ are adjacent in $\Pi H$. Therefore $\xi$ and $\eta$ are adjacent by \cref{prop:composition-on-PiH-is-graph-hom}.
\end{proof}

\begin{remark}
    Like \cref{rmk:PiH-as-internal-groupoid}, 
    this digression is intended for readers familiar with the notion of internal category \cite[Section~XII.1]{MacLane-CWM}. Let $G$ and $H$ be graphs.
    Consider the following pullback in the category of posets and monotone maps:
\[
\begin{tikzpicture}[baseline=-\the\dimexpr\fontdimen22\textfont2\relax ]
      \node(0) at (0,1) {$\Hom(G,\Pi H)\times_{\Hom(G,H)}\Hom(G,\Pi H)$};
      \node(1) at (5,1) {$\Hom(G,\Pi H)$};
      \node(2) at (0,-1) {$\Hom(G,\Pi H)$};
      \node(3) at (5,-1) {$\Hom(G,H)$.};
      \draw [->] (0) to node[auto,labelsize] {} (1);
      \draw [<-] (2) to node[auto,labelsize] {} (0);
      \draw [->] (1) to node[auto,labelsize] {$\Hom(G,s)$} (3);
      \draw [->] (2) to node[auto,labelsize] {$\Hom(G,t)$} (3);
      \draw (0.3,0.4) to (0.6,0.4) to (0.6,0.7);
\end{tikzpicture}
\]
    Explicitly, $\Hom(G,\Pi H)\times_{\Hom(G,H)}\Hom(G,\Pi H)$ is the subposet of $\Hom(G,\Pi H)\times\Hom(G,\Pi H)$ determined by the subset 
    \[
    \bigl\{\,(\varphi_1,\varphi_2)\in \Hom(G,\Pi H)\times\Hom(G,\Pi H) \;\big\vert\; t\varphi_1=s\varphi_2\,\bigr\}.
    \]
    \cref{prop:composition-in-HomGPiH} implies that we have a monotone map
    \[
    \widehat m\colon \Hom(G,\Pi H)\times_{\Hom(G,H)}\Hom(G,\Pi H)\to \Hom(G,\Pi H)
    \]
    mapping $(\varphi_1,\varphi_2)$ to $\varphi_1\cdot \varphi_2$. The tuple $\bigl(\Hom(G,s),\Hom(G,t),\Hom(G,e),\widehat m\bigr)$ of monotone maps (see \cref{def:s-t-i}) then makes $\Hom(G,\Pi H)$ into an internal category in the category of posets.
    In contrast to \cref{rmk:PiH-as-internal-groupoid}, $\Hom(G,\Pi H)$ is not an internal groupoid in general.
\end{remark}

\subsection{Relation to Wrochna's work}\label{subsec:relation-to-Wrochna}
It turns out that essentially all observations needed to determine the group $\Aut(p_f)$ are contained in \cite{Wro20}. In order to explain this, in this subsection we establish a precise relationship between the graph $\Pi H$ and Wrochna's work.

We start with the following observation.

\begin{proposition}
\label{cor:homotopy-walk}
    Let $G$ and $H$ be graphs, and $h\colon G\to \Pi H$ a graph homomorphism. Then for any walk $\omega$ in $G$ from $u$ to $v$, we have 
    \begin{equation*}
        h(v)=\overline{sh(\omega)}^{-1}\cdot h(u)\cdot \overline{th(\omega)}.
    \end{equation*}
\end{proposition}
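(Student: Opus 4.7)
My plan is to proceed by induction on the length $\ell$ of $\omega=(w_0,w_1,\ldots,w_\ell)$, with $u=w_0$ and $v=w_\ell$. The base case $\ell=0$ is immediate: both sides reduce to $h(u)$, since the length-$0$ reduced walks $\bigl(sh(u)\bigr)$ and $\bigl(th(u)\bigr)$ act as left and right identities for the product $\cdot$.

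For $\ell=1$, the walk is $\omega=(u,v)$ with $uv\in E(G)$, so the reduced walks $h(u)$ and $h(v)$ are adjacent in $\Pi H$ because $h$ is a graph homomorphism. The symmetric reformulation of the adjacency condition noted in \cref{def:adjacency-reduced-walks} gives
\[
h(v)=\bigl(sh(v),sh(u)\bigr)\cdot h(u)\cdot \bigl(th(u),th(v)\bigr),
\]
which is exactly the claimed identity, since for $\ell=1$ both $sh(\omega)$ and $th(\omega)$ are already reduced.

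For the inductive step, I would split $\omega=\omega'\bullet (w,v)$ with $w=w_{\ell-1}$, apply the inductive hypothesis to $\omega'$ to express $h(w)$ in terms of $h(u)$, and apply the $\ell=1$ case to the edge $(w,v)$ to relate $h(v)$ to $h(w)$. Substituting and using associativity of $\cdot$, the task reduces to verifying
\[
\bigl(sh(v),sh(w)\bigr)\cdot \overline{sh(\omega')}^{-1}=\overline{sh(\omega)}^{-1} \quad\text{and}\quad \overline{th(\omega')}\cdot \bigl(th(w),th(v)\bigr)=\overline{th(\omega)},
\]
both of which follow from the compatibility of reduction with concatenation, namely $\overline{\alpha\bullet\beta}=\overline{\alpha}\cdot \overline{\beta}$, combined with $(\eta_1\cdot \eta_2)^{-1}=\eta_2^{-1}\cdot \eta_1^{-1}$.

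I do not expect a serious obstacle here: the length-$1$ case is essentially a rephrasing of the adjacency condition in $\Pi H$, and the inductive step is routine bookkeeping with reductions, concatenations, and inverses. The only care required is in checking that successive sources and targets match so that each intermediate product in $\Pi H$ is defined.
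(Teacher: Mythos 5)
Your proposal is correct and follows essentially the same approach as the paper: the paper also establishes the length-one case directly from the adjacency condition in $\Pi H$ and then notes the general case follows by applying it repeatedly, which is exactly the induction you spell out.
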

\begin{proof}
    First suppose $\len(\omega)=1$, i.e., $\omega=(u,v)$. Then $u$ and $v$ are adjacent in $G$ and hence so are $h(u)$ and $h(v)$ in $\Pi H$. Therefore we have 
\[
h(v)=\bigl(sh(v),s h(u) \bigr)\cdot h(u)\cdot \bigl(t h(u),th(v) \bigr)
\]
    by \cref{def:adjacency-reduced-walks}.
    The general case can be proved by applying this repeatedly.
\end{proof}

\cref{cor:homotopy-walk} has the following consequence, relating the graph $\Pi H$ and \cite{Wro20}.
Recall from \cite[Definition~4.2]{Wro20} that, given graphs $G,H$, graph homomorphisms $f,g\colon G\to H$, and a vertex $u\in V(G)$, a reduced walk $\xi$ from $f(u)$ to $g(u)$ in $H$ is said to be \emph{topologically valid for $f,g,u$} if we have 
\[
\xi=\overline{f(\omega)}^{-1}\cdot \xi\cdot \overline{g(\omega)}
\]
for each closed walk $\omega$ on $u$ in $G$.
(In \cite{Wro20}, this definition is made under the assumption of \cref{thm:main}.)
Applying \cref{cor:homotopy-walk} to closed walks, we see that whenever $h\colon G\to \Pi H$ is a graph homomorphism and $u\in V(G)$, the reduced walk $h(u)$ in $H$ is topologically valid for $sh,th,u$. 
Recall from \cref{def:homotopy} the notion of homotopy between graph homomorphisms. It follows that any homotopy $h$ from $f$ to $g$ yields a topologically valid walk $h(u)$ for $f,g,u$, for any $u\in V(G)$.

\begin{proposition}
\label{prop:homotopy-top-valid-walk}
    Let $G$ and $H$ be graphs, and $f,g\colon G\to H$ graph homomorphisms. Suppose that $G$ is connected. Then, for each $u\in V(G)$, the function $h\mapsto h(u)$ gives rise to a bijection from the set of all homotopies from $f$ to $g$ to the set of all topologically valid walks for $f,g,u$.
\end{proposition}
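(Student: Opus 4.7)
The plan is to establish the bijection by proving injectivity and surjectivity separately, using \cref{cor:homotopy-walk} as the main tool in both directions.

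Injectivity is essentially immediate. Suppose $h_1,h_2$ are homotopies from $f$ to $g$ with $h_1(u)=h_2(u)$. For any $v\in V(G)$, the connectedness of $G$ yields a walk $\omega$ from $u$ to $v$, and \cref{cor:homotopy-walk} gives
\[
h_i(v)=\overline{f(\omega)}^{-1}\cdot h_i(u)\cdot \overline{g(\omega)}
\]
for both $i=1,2$, forcing $h_1(v)=h_2(v)$ and hence $h_1=h_2$.

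For surjectivity, given a topologically valid reduced walk $\xi$ for $f,g,u$, I would define $h\colon V(G)\to V(\Pi H)$ by $h(v)=\overline{f(\omega)}^{-1}\cdot \xi\cdot \overline{g(\omega)}$ for any walk $\omega$ from $u$ to $v$. The key step is well-definedness. For two walks $\omega,\omega'$ from $u$ to $v$, the concatenation $\alpha=\omega\bullet (\omega')^{-1}$ is a closed walk on $u$, and a direct computation using $\overline{\beta\bullet \gamma}=\overline{\beta}\cdot \overline{\gamma}$ together with $(\beta\cdot \gamma)^{-1}=\gamma^{-1}\cdot \beta^{-1}$ reduces the topological validity condition $\xi=\overline{f(\alpha)}^{-1}\cdot \xi\cdot \overline{g(\alpha)}$ to the desired identity
\[
\overline{f(\omega)}^{-1}\cdot \xi\cdot \overline{g(\omega)}=\overline{f(\omega')}^{-1}\cdot \xi\cdot \overline{g(\omega')}
\]
after left- and right-multiplying by $\overline{f(\omega')}^{-1}$ and $\overline{g(\omega')}$, respectively.

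It remains to check that $h$ is a graph homomorphism $G\to \Pi H$ satisfying $sh=f$, $th=g$, and $h(u)=\xi$. For adjacent $v,w\in V(G)$, I choose any walk $\omega$ from $u$ to $v$ and compute $h(w)$ using the extension $\omega\bullet (v,w)$. Applying $\overline{f(\omega)\bullet (f(v),f(w))}=\overline{f(\omega)}\cdot (f(v),f(w))$ (and likewise for $g$) yields
\[
h(w)=\bigl(f(w),f(v)\bigr)\cdot h(v)\cdot \bigl(g(v),g(w)\bigr),
\]
which is precisely the product identity in \cref{def:adjacency-reduced-walks} with $\xi=h(w)$ and $\eta=h(v)$; the side adjacencies $f(v)f(w)\in E(H)$ and $g(v)g(w)\in E(H)$ hold because $f$ and $g$ are graph homomorphisms. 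The equations $sh=f$ and $th=g$ are read off the outermost factors of the definition of $h(v)$, and $h(u)=\xi$ follows by taking $\omega$ to be the trivial walk at $u$. The main obstacle is the well-definedness step above; once that is settled via the topological validity hypothesis, everything else reduces to bookkeeping with reduced-walk products.
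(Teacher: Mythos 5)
Your proof is correct and follows essentially the same route as the paper's: injectivity via \cref{cor:homotopy-walk}, surjectivity by the same explicit formula $h(v)=\overline{f(\omega)}^{-1}\cdot \xi\cdot \overline{g(\omega)}$, well-definedness via the topological validity condition applied to $\omega\bullet(\omega')^{-1}$, and the homomorphism check via extending $\omega$ by $(v,w)$. You spell out the final bookkeeping ($sh=f$, $th=g$, $h(u)=\xi$) a bit more explicitly than the paper does, but the argument is the same.
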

\begin{proof}
    Since $G$ is connected, $h\mapsto h(u)$ is injective by \cref{cor:homotopy-walk}. To show that it is surjective, let $\xi$ be a topologically valid walk for $f,g,u$. For each $v\in V(G)$, choose any walk $\omega$ in $G$ from $u$ to $v$ and define 
    \[
    h(v)= \overline{f(\omega)}^{-1}\cdot \xi\cdot \overline{g(\omega)}.
    \]
    This definition is independent of the choice of $\omega$. Indeed, if $\omega$ and $\omega'$ are both walks in $G$ from $u$ to $v$, then $\omega\bullet \omega'^{-1}$ is a closed walk on $u$, and hence we have 
    \begin{align*}
        \xi
        &=\overline{f(\omega\bullet\omega'^{-1})}^{-1}\cdot \xi \cdot \overline{g(\omega\bullet\omega'^{-1})}\\
        &=\overline{f(\omega')}\cdot\overline{f(\omega)}^{-1}\cdot \xi \cdot \overline{g(\omega)}\cdot\overline{g(\omega')}^{-1},
    \end{align*}
    which implies 
    \[
    \overline{f(\omega')}^{-1}\cdot \xi\cdot \overline{g(\omega')}=\overline{f(\omega)}^{-1}\cdot \xi\cdot \overline{g(\omega)}.
    \]
    Thus we obtain a function $h\colon V(G)\to V(\Pi H)$. To see that $h$ is a graph homomorphism $G\to \Pi H$, suppose $vw\in E(G)$. Then, for any walk $\omega$ from $u$ to $v$, the concatenation $\omega \bullet(v,w)$ is a walk from $u$ to $w$.
    Using this walk in the definition of $h(w)$, we have 
    \[
    h(w)=\bigl(f(w),f(v)\bigr)\cdot h(v)\cdot \bigl(g(v),g(w)\bigr),
    \]
    and hence $h(v)$ and $h(w)$ are adjacent in $\Pi H$. $h$ is clearly a homotopy from $f$ to $g$.
\end{proof}

\subsection{The group $\Gamma_f$}
Throughout this subsection, let $G$ and $H$ be graphs satisfying the assumption of \cref{thm:main}, and let $f\colon G\to H$ be a graph homomorphism.
Consider the set 
\[
\Gamma_f=\{\,h\colon G\to \Pi H\mid \text{$sh=f=th$ and $h\in E_f$}\,\}
\]
of all homotopies $h\in E_f$ from $f$ to itself.
It is easy to see that this set becomes a group under the operation \cref{eqn:composition-in-HomGPiH}, in the light of \cref{prop:univ-cover-explicitly-atom}.

We define the map $\Gamma_f\times E_f\to E_f$ 
by sending $(h,\varphi)\in \Gamma_f\times E_f$ to $h\cdot\varphi$ as in \cref{eqn:composition-in-HomGPiH}, which is in $E_f$ by \cref{prop:univ-cover-explicitly,prop:composition-in-HomGPiH}.
Also, \cref{prop:composition-in-HomGPiH} ensures that we have $t(h\cdot \varphi)=t\varphi$, i.e., that the map $h\cdot(-)\colon E_f\to E_f$ makes the diagram
\[
\begin{tikzpicture}[baseline=-\the\dimexpr\fontdimen22\textfont2\relax ]
      \node(0) at (0,0.5) {$E_f$};
      \node(1) at (3,0.5) {$E_f$};
      \node(2) at (1.5,-0.5) {$B_f$};
      \draw [->] (0) to node[auto,labelsize] {$h\cdot(-)$} (1);
      \draw [<-] (2) to node[auto,labelsize] {$p_f$} (0);
      \draw [<-] (2) to node[auto,swap,labelsize] {$p_f$} (1);
\end{tikzpicture}
\]
commute. Therefore we obtain a group homomorphism $\Gamma_f\to \Aut(p_f)$. 
Moreover, one can identify the fiber $p_f^{-1}(f)\subseteq E_f$ of $p_f$ over $f\in B_f$ with $\Gamma_f$ by \cref{prop:HomGt-discrete-fibration}, and, under this identification, the restriction $\Gamma_f\times p_f^{-1}(f)\to p_f^{-1}(f)$ of the action $\Gamma_f\times E_f\to E_f$ to this fiber corresponds to the free and transitive action of $\Gamma_f$ on itself by left multiplication. In particular, $\Gamma_f$ acts on the fiber $p_f^{-1}(f)$ freely and transitively.
Therefore (see e.g.~\cite[Proposition~1.34]{Hat02} or \cite[Theorem~2.3.4]{Szamuely}) the group homomorphism $\Gamma_f\to \Aut(p_f)$ is an isomorphism.
Since the group $\Aut(p_f)$ is isomorphic to the fundamental group $\pi_1(B_f)$ (see e.g.~\cite[Proposition~1.39]{Hat02}), we have shown the following.

\begin{proposition}\label{prop:Lambda-is-fundamental-group}
    The fundamental group $\pi_1(B_f)$ of $B_f$ is isomorphic to $\Gamma_f$. 
\end{proposition}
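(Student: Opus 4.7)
The plan is to construct an explicit isomorphism $\Gamma_f\to \Aut(p_f)$ and then invoke the standard identification $\Aut(p_f)\cong \pi_1(B_f)$ for a universal cover, which applies here since $p_f\colon E_f\to B_f$ is a covering map (\cref{thm:cover}) and $E_f$ is contractible (\cref{thm:contractible}), hence simply connected. The idea is to let $\Gamma_f$ act on $E_f$ by left multiplication using the partial multiplication on $\Hom(G,\Pi H)$ from \cref{prop:composition-in-HomGPiH}.

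First I would verify that $\Gamma_f$ really is a group under the operation \cref{eqn:composition-in-HomGPiH}: closure follows from \cref{prop:composition-in-HomGPiH} combined with \cref{prop:univ-cover-explicitly-atom} (the product of two length-even-valued homotopies is again length-even-valued and still sends each $u$ in an $f$-tight closed walk to $(f(u))$); associativity is inherited from the associativity of $\cdot$ on reduced walks; the identity is $\id_f$; and inverses exist because for $h\in\Gamma_f$ the function $u\mapsto h(u)^{-1}$ is again a graph homomorphism $G\to \Pi H$ lying in $E_f$, as can be checked via \cref{prop:univ-cover-explicitly-atom}.

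Next I would define, for each $h\in \Gamma_f$, the monotone map $h\cdot(-)\colon E_f\to E_f$ sending $\varphi\mapsto h\cdot \varphi$. \cref{prop:composition-in-HomGPiH} guarantees that $h\cdot\varphi$ is a set-valued homomorphism $G\pto \Pi H$ with $s(h\cdot\varphi)=f$ and $t(h\cdot\varphi)=t\varphi$. To see that $h\cdot\varphi\in E_f$, one applies \cref{prop:univ-cover-explicitly}: evenness of lengths is preserved under the product of reduced walks of even length, and the condition $(h\cdot\varphi)(u)=\{(f(u))\}$ for $u$ in an $f$-tight closed walk follows from $h(u)=(f(u))=\varphi(u)$ in that case. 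Monotonicity is immediate from the definition. The commutativity of the triangle involving $p_f$ is exactly the identity $t(h\cdot\varphi)=t\varphi$, so $h\cdot(-)$ is a deck transformation. Functoriality with respect to the group operation in $\Gamma_f$ (i.e., $(h_1\cdot h_2)\cdot\varphi=h_1\cdot(h_2\cdot\varphi)$ and $\id_f\cdot\varphi=\varphi$) yields a group homomorphism $\Psi\colon \Gamma_f\to \Aut(p_f)$.

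Finally I would show $\Psi$ is an isomorphism by analyzing the fiber $p_f^{-1}(f)\subseteq E_f$. By \cref{prop:HomGt-discrete-fibration}, any element of $p_f^{-1}(f)$ is an atom of $\Hom(G,\Pi H)_f$, i.e., a graph homomorphism $h\colon G\to \Pi H$ with $sh=f=th$ lying in $E_f$; this identifies $p_f^{-1}(f)$ with $\Gamma_f$. Under this identification, the restriction of the action $\Gamma_f\times E_f\to E_f$ to the fiber becomes left multiplication of $\Gamma_f$ on itself, which is free and transitive. By the standard covering-space fact (\cite[Proposition~1.34]{Hat02}), a deck transformation of a cover is determined by its value at a single point, and any base-point-preserving map of universal covers is realized by such a transformation; hence $\Psi$ is bijective. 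Composing with the canonical isomorphism $\Aut(p_f)\cong \pi_1(B_f)$ for a universal cover (\cite[Proposition~1.39]{Hat02}) yields the desired isomorphism $\Gamma_f\cong \pi_1(B_f)$. The only genuinely nontrivial step is the closure verification in the first paragraph, since it is the one place where \cref{prop:univ-cover-explicitly-atom} and \cref{prop:composition-on-PiH-is-graph-hom} both get used; the rest is bookkeeping and a direct appeal to covering-space theory.
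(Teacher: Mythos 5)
Your proposal is correct and follows essentially the same argument as the paper: define the left action of $\Gamma_f$ on $E_f$ via the partial multiplication of \cref{prop:composition-in-HomGPiH}, verify it descends to deck transformations, identify the fiber $p_f^{-1}(f)$ with $\Gamma_f$ using the injectivity from \cref{prop:HomGt-discrete-fibration}, and conclude via the standard free-and-transitive characterization of $\Aut(p_f)\cong\pi_1(B_f)$. You supply somewhat more detail on the group axioms for $\Gamma_f$ (in particular the inverse $u\mapsto h(u)^{-1}$), which the paper leaves to the reader, but the structure of the argument is the same.
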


Let us determine the group $\Gamma_f$.

\begin{definition}\label{def:piH}
    For each vertex $x\in V(H)$, we denote the group of all closed reduced (but not necessarily cyclically reduced) walks on $x$ by $\pi_1(H,x)$. The group operation on $\pi_1(H,x)$ is the product $\cdot$ of reduced walks. We write the subgroup of $\pi_1(H,x)$ consisting of all closed reduced walks of even lengths as $\pi_1(H,x)_\ev$; this subgroup is called the \emph{even part} of $\pi_1(H,x)$ in \cite[Section~3]{Mat17JMSUT}. 
    Since $H$ is connected, the groups $\pi_1(H,x)$ and $\pi_1(H,x)_\ev$ do not depend on $x\in V(H)$; at least they are well-defined up to non-canonical isomorphisms. Thus we will also write them simply as $\pi_1(H)$ and $\pi_1(H)_\ev$, respectively.
\end{definition}

It is well-known that $\pi_1(H)$ is isomorphic to the fundamental group of the geometric realization of the graph $H$ \cite[Corollary~3.6.17]{Spanier}.

\begin{proposition}\label{thm:Lambda-classification}
    The group $\Gamma_f$ is isomorphic to either of the following groups. 
    \begin{enumerate}[label=\emph{({\arabic*})}]
        \item The trivial group $\{1\}$.
        \item The infinite cyclic group $\mathbb{Z}$.
        \item The group $\pi_1(H)_\ev$.
    \end{enumerate}
\end{proposition}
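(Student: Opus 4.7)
The strategy is to transfer the problem from $\Gamma_f$ to a subgroup of $\pi_1(H, f(u_0))_\ev$ for a fixed base vertex $u_0 \in V(G)$ via \cref{prop:homotopy-top-valid-walk}, and then exploit that the fundamental group of any graph is free. Consider the evaluation map $\ev_{u_0}\colon \Gamma_f \to \pi_1(H,f(u_0))_\ev$ sending $h$ to $h(u_0)$. It is well-defined by \cref{prop:univ-cover-explicitly-atom}~(1), and it is a group homomorphism because the product on $\Gamma_f$ (restricted from the operation of \cref{prop:composition-in-HomGPiH}) acts on graph homomorphisms as the pointwise product $\cdot$ of reduced walks. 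Injectivity follows from \cref{prop:homotopy-top-valid-walk} together with the connectedness of $G$, so $\Gamma_f$ embeds as a subgroup $\Lambda_f$ of $F_\ev := \pi_1(H,f(u_0))_\ev$.

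Set $F := \pi_1(H,f(u_0))$ and let $K \subseteq F$ be the subgroup generated by the elements $\overline{f(\omega)}$ as $\omega$ ranges over closed walks on $u_0$ in $G$ (i.e., the image of the induced map $f_* \colon \pi_1(G,u_0) \to F$). Combining \cref{prop:homotopy-top-valid-walk} with the discussion after \cref{cor:homotopy-walk} identifies $\Lambda_f$ with the set of $\xi \in F_\ev$ satisfying: (ii) $\xi$ centralizes $K$; and (iii) the unique homotopy $h$ determined by $h(u_0)=\xi$ obeys $h(v) = \bigl(f(v)\bigr)$ for every vertex $v$ of $G$ lying on an $f$-tight closed walk.

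Next, I split into two cases. If some vertex $v$ of $G$ lies on an $f$-tight closed walk, pick any walk $\omega$ in $G$ from $u_0$ to $v$; then \cref{cor:homotopy-walk} with $sh=th=f$ yields $h(v) = \overline{f(\omega)}^{-1} \cdot \xi \cdot \overline{f(\omega)}$, and condition~(iii) forces this to be the trivial walk at $f(v)$, so $\xi = \bigl(f(u_0)\bigr)$ by cancellation; hence $\Gamma_f = \{1\}$, which is case~(1). Otherwise condition (iii) is vacuous and $\Lambda_f = C_F(K) \cap F_\ev$. Since $\gr{H}$ is a one-dimensional CW complex, $F$ is a free group; in a free group, the centralizer of a subgroup $K$ is either $F$ itself (when $K=\{1\}$), an infinite cyclic group (when $K$ is nontrivial and abelian), or trivial (when $K$ is nonabelian). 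Intersecting with the index-at-most-$2$ subgroup $F_\ev$ preserves these three types---an infinite cyclic subgroup $\langle g \rangle$ meets $F_\ev$ in at least the subgroup of squares $\langle g^2 \rangle$, which is itself infinite cyclic---producing the three alternatives $\pi_1(H)_\ev$, $\mathbb{Z}$, and $\{1\}$ of the statement.

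The main technical obstacle is the ``if'' direction of the above characterization of $\Lambda_f$: given $\xi \in F_\ev$ satisfying (ii) and (iii), one must exhibit some $h \in E_f$ with $h(u_0) = \xi$. \cref{prop:homotopy-top-valid-walk} supplies a homotopy $h\colon G\to \Pi H$ with $h(u_0)=\xi$; membership $h \in E_f$ is then checked via \cref{prop:univ-cover-explicitly-atom}, whose condition~(1) propagates from $u_0$ throughout $V(G)$ by the parity invariance of \cref{prop:adjacent-walk-parity} and the connectedness of $G$, while condition~(2) is precisely (iii). The resulting group $\Lambda_f$ is essentially Wrochna's group of realizable walks, and the classification can alternatively be derived from the structural results of \cite{Wro20}.
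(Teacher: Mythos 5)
Your proof is correct, but it takes a genuinely different route from the paper's. The paper's proof is a short translation argument: it identifies $\Gamma_f$ with Wrochna's group of realizable walks for $f,f,u$ (by comparing \cref{prop:univ-cover-explicitly-atom} with \cite[Theorem~6.1]{Wro20}), verifies that the group products match, and then invokes \cite[Theorem~8.1]{Wro20} as a black box for the trichotomy. In contrast, you carry out the classification from scratch: you embed $\Gamma_f$ into $F_\ev = \pi_1(H,f(u_0))_\ev$ via the evaluation map $h \mapsto h(u_0)$, characterize the image $\Lambda_f$ by the centralizer condition (ii) together with the $f$-tightness constraint (iii) from \cref{prop:univ-cover-explicitly-atom}(2), and then deduce the trichotomy from the classical structure of centralizers of subgroups of a free group, observing that intersecting with the index-at-most-$2$ subgroup $F_\ev$ preserves each of the three types. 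This is essentially a self-contained re-derivation of the relevant part of Wrochna's Theorem~8.1 rather than a citation of it; the case split you obtain ($f$-tight closed walk present $\Rightarrow$ trivial group; otherwise $\Lambda_f = C_F(K)\cap F_\ev$ with $K = f_*(\pi_1(G,u_0))$) makes the mechanism behind the three alternatives more transparent. The trade-off is that the paper's route, by establishing the explicit dictionary with realizable walks, is reused directly in the subsequent proof of \cref{prop:main-cases}, where a finer inspection of Wrochna's cases is needed; your approach would require a separate argument for that refinement.
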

\begin{proof}
    First we observe that, given any graph homomorphism $g\colon G\to H$ and any $u\in V(G)$, the bijection given in \cref{prop:homotopy-top-valid-walk} restricts to a bijection from the set of all homotopies from $f$ to $g$ \emph{in} $E_f$ to the set of all \emph{realizable} walks for $f,g,u$; the latter notion is introduced in \cite[Section~3]{Wro20}. For example, one can verify this by comparing the explicit description of $E_f$ in \cref{prop:univ-cover-explicitly-atom} and the characterization of realizable walks in \cite[Theorem~6.1]{Wro20}. 
    Therefore the set $\Gamma_f$ bijectively corresponds to the set of all realizable walks for $f,f,u$, which are certain closed reduced walks in $H$ on $f(u)$. 
    Moreover, under this identification, the product in $\Gamma_f$ corresponds to the product of realizable walks. The claim now follows from \cite[Theorem~8.1]{Wro20}.
\end{proof}

\begin{remark}\label{rmk:algo-for-Lambda}
    One can use an algorithm given in \cite{Wro20} to determine the group $\Gamma_f$ in polynomial time. More precisely, one can apply the algorithm described in \cite[Theorem~8.1]{Wro20} to the input data $G,H,f,f,u$, where $u$ is an arbitrary vertex of $G$.
    This, together with \cref{prop:Lambda-is-fundamental-group} and the fact that $B_f$ is aspherical (see \cref{sec:intro}), implies that one can use Wrochna's algorithm to determine the homotopy type of $B_f$. See the proof of \cref{thm:main} below for details.
\end{remark}

Let us consider the case (3) of \cref{thm:Lambda-classification} in more detail. 
It is easy to see the following (cf.~\cite[Section~3 and Proposition~6.12]{Mat17JMSUT}).
\begin{itemize}
    \item If $H$ is bipartite, then $\pi_1(H)_\ev=\pi_1(H)$.
    \item if $H$ is not bipartite, then $\pi_1(H)_\ev$ is a proper subgroup of $\pi_1(H)$ of index $2$. Moreover, $\pi_1(H)_\ev$ is isomorphic to $\pi_1(H\times K_2)$. 
\end{itemize}
Therefore $\pi_1(H)_\ev$ is a free group, and its rank $r$ is given by
\begin{equation}\label{eqn:rank-of-fundamental-group}
    r=\begin{cases}
        |E(H)|-|V(H)|+1 &\text{if $H$ is bipartite, and}\\
        2|E(H)|-2|V(H)|+1 &\text{if $H$ is not bipartite,}
    \end{cases}
\end{equation}
as can be seen by combining well-known facts (see e.g.~\cite[Corollary~3.7.5]{Spanier} and \cite[Propositions~3.8 and 3.9]{Lyndon-Schupp}).

We can now prove our main theorem and its refinement.

\begin{proof}[{Proof of \cref{thm:main}}]
    By \cref{thm:cover,thm:contractible}, the connected component $B_f$ of $\Hom(G,H)$ containing the given graph homomorphism $f\colon G\to H$ is the Eilenberg--MacLane space $K\bigl(\pi_1(B_f),1\bigr)$, or equivalently (by \cref{prop:Lambda-is-fundamental-group}), $K(\Gamma_f,1)$. Since an Eilenberg--MacLane space is unique up to homotopy equivalence (see e.g.~\cite[Theorem~1B.8]{Hat02}), we have the following.
    \begin{itemize}
        \item[(1)] If (1) of \cref{thm:Lambda-classification} happens, then $B_f$ is $K(\{1\},1)$, and hence is homotopy equivalent to a point.
        \item[(2)] If (2) of \cref{thm:Lambda-classification} happens, then $B_f$ is $K(\mathbb{Z},1)$, and hence is homotopy equivalent to a circle. 
        \item[(3a)] If (3) of \cref{thm:Lambda-classification} happens and $H$ is bipartite, then $B_f$ is $K\bigl(\pi_1(H),1\bigr)$, and hence is homotopy equivalent to (the geometric realization of) $H$. 
        \item[(3b)] If (3) of \cref{thm:Lambda-classification} happens and $H$ is not bipartite, then $B_f$ is $K\bigl(\pi_1(H\times K_2),1\bigr)$, and hence is homotopy equivalent to (the geometric realization of) $H\times K_2$.\qedhere
    \end{itemize}
\end{proof}

\begin{proof}[{Proof of \cref{prop:main-cases}}]
    Looking more closely at \cite[Proof of Theorem~8.1]{Wro20}, we see that case~3 of \cite[Theorem~8.1]{Wro20} (corresponding to (3) of \cref{thm:Lambda-classification} and \cref{thm:main}) occurs if and only if case~3 of \cite[Lemma~7.5]{Wro20} occurs. 
    Now, \cite[Proof of Lemma~7.5]{Wro20} reveals that case~3 of \cite[Lemma~7.5]{Wro20} occurs if and only if $G$ is bipartite. Thus if $G$ is not bipartite, only (1) or (2) of \cref{thm:Lambda-classification} and \cref{thm:main} occurs; and, of course, if moreover $H$ is bipartite, then there is no graph homomorphism $G\to H$ and hence $\Hom(G,H)$ is empty. 

    Suppose that $G$ is bipartite. Then there exists a graph homomorphism $g\colon G\to K_2$. Since $H$ has at least one edge, we also have a graph homomorphism $j\colon K_2\to H$. The composite graph homomorphism $jg\colon G\to H$ then satisfies case~3 of \cite[Lemma~7.5]{Wro20} (or more precisely, the pair $jg,jg$ satisfies it, because \cite[Lemma~7.5]{Wro20} deals with a pair of homomorphisms). 
    It is easy to see that any two homomorphisms $G\to H$ factoring through $K_2$ are in the same connected component of $\Hom(G,H)$ if and only if $H$ is not bipartite. 
\end{proof}

%%%
\appendix
\section{An approach via the covering theory of graphs}
\label{apx:comparing}
The aim of this appendix is to describe another approach to construct the universal cover $p_f\colon E_f\to B_f$. 
This approach is based on the covering theory of graphs, as recalled in \cref{subsec:univ-cover-graph,subsec:Galois}, and gives rise to a \emph{family} of definitions of the universal cover $p_f\colon E_f\to B_f$. 
We explain this and establish its equivalence with our definition given in \cref{subsec:def-of-pf} (see \cref{thm:univ-cov-Galois}). 
Matsushita's definition mentioned in \cref{sec:intro} turns out to be a member of this family (see \cref{ex:cover-approach}).

\subsection{Group actions on graphs}\label{subsec:group-action}
We begin with a quick review of group actions on graphs. Let $G$ be a graph. An \emph{automorphism on $G$} is a graph isomorphism $G\to G$. We denote by $\Aut(G)$ be the group of all automorphisms on $G$.

Let $\Gamma$ be a group.
A \emph{$\Gamma$-graph} is a graph $G$ equipped with a \emph{$\Gamma$-action} on it, by which we mean a group homomorphism $h\colon \Gamma\to \Aut(G)$. The group homomorphism $h$ can be equivalently specified by the function $\Gamma\times V(G)\to V(G)$ mapping each $(\gamma,v)\in \Gamma\times V(G)$ to $\bigl(h(\gamma)\bigr)(v)\in V(G)$. 
We often write $\bigl(h(\gamma)\bigr)(v)$ as $\gamma\cdot v$.
Notice that we also use the symbol $\cdot$ to denote the product of reduced walks (see \cref{subsec:walk}). 
As it turns out, in the specific $\Gamma$-graphs we encounter in this appendix, the group actions are indeed often given by (or at least induced by) the product of reduced walks.

Let $G$ and $H$ be $\Gamma$-graphs. 
Then a graph homomorphism $f\colon G\to H$ is said to be \emph{$\Gamma$-equivariant} if for each $\gamma\in \Gamma$ and $v\in V(G)$, we have $\gamma\cdot f(v)=f(\gamma\cdot v)$. Similarly, a set-valued homomorphism $\varphi\colon G\pto H$ is \emph{$\Gamma$-equivariant} if for each $\gamma\in \Gamma$ and $v\in V(G)$, we have $\gamma\cdot \varphi(v)=\varphi(\gamma\cdot v)$, where the left-hand side $\gamma\cdot \varphi(v)$ denotes the set 
\[
\{\,\gamma\cdot x \mid x\in \varphi(v)\,\}\subseteq V(H).
\]
We write the subposet of $\Hom(G,H)$ consisting of all $\Gamma$-equivariant set-valued homomorphisms $G\pto H$ by $\Hom^\Gamma(G,H)$.

Let $k\colon \Gamma\to \Gamma'$ be a group homomorphism. Then any $\Gamma'$-graph $\bigl(G,h\colon \Gamma'\to\Aut(G)\bigr)$ gives rise to the $\Gamma$-graph $\bigl(G,hk\colon \Gamma\to \Gamma'\to\Aut(G)\bigr)$.

\subsection{The universal cover of a graph}\label{subsec:univ-cover-graph}
Here we recall the basics of the covering theory for graphs. See e.g.~\cite[Sections~1.3 and 1.A]{Hat02}. 

Recall the group $\pi_1(H,x)$ defined in \cref{def:piH} for a graph $H$ and a vertex $x\in V(H)$. If $f\colon G\to H$ is a graph homomorphism and $u\in V(G)$ and $x\in V(H)$ are vertices with $f(u)=x$, then we obtain a group homomorphism $f_\ast\colon \pi_1(G,u)\to \pi_1(H,x)$ mapping each $\omega\in \pi_1(G,u)$ to $f_\ast(\omega)=\overline{f(\omega)}$. By \cref{prop:covering-preserves-reduced-walks}, if $f$ is a covering map of graphs, then we have $f_\ast(\omega)=f(\omega)$, and it is easy to see that the group homomorphism $f_\ast\colon \pi_1(G,u)\to \pi_1(H,x)$ is injective in this case.

For any graph $H$ and any vertex $x\in V(H)$, we define
\[
V(\Pi H)_x=\{\,\xi\in V(\Pi H)\mid s(\xi)=x\,\}. 
\]

Let $H$ be a connected graph and $x\in V(H)$.
Define the graph $\widetilde H$ by $V(\widetilde H) = V(\Pi H)_x$ and, for each $\xi, \eta\in V(\widetilde H)$, $\xi\eta\in E(\widetilde H)$ if and only if $t(\xi)$ and $t(\eta)$ are adjacent in $H$ and $\xi=\eta\cdot \bigl(t(\eta),t(\xi)\bigr)$ holds (cf.~\cref{def:adjacency-reduced-walks}).
Define the graph homomorphism $p_H\colon \widetilde H\to H$ by mapping each $\xi\in V(\widetilde H)$ to $p_H(\xi)=t(\xi)\in V(H)$; it is called the \emph{universal cover} of $H$.
It is easy to see that $p_H\colon \widetilde H\to H$ is a covering map of graphs and its domain $\widetilde H$ is a tree (i.e., a connected graph without cycles).
(In fact, these properties characterize $p_H\colon \widetilde H\to H$ up to isomorphism, although we will not need this fact. In particular, the definition of $p_H\colon \widetilde H\to H$ does not depend on the choice of the vertex $x$.)

The graph $\widetilde H$ has a natural $\pi_1(H,x)$-graph structure, given by the function $\pi_1(H,x)\times V(\widetilde H)\to V(\widetilde H)$ mapping $(\xi,\eta)\in \pi_1(H,x)\times V(\widetilde H)$ to $\xi\cdot \eta\in V(\widetilde H)$, where $\cdot$ denotes the product of reduced walks (see \cref{subsec:walk}). 
For each $\xi\in \pi_1(H,x)$, the diagram 
    \[
\begin{tikzpicture}[baseline=-\the\dimexpr\fontdimen22\textfont2\relax ]
      \node(0) at (0,0.5) {$\widetilde H$};
      \node(1) at (3,0.5) {$\widetilde H$};
      \node(2) at (1.5,-0.5) {$H$};
      \draw [->] (0) to node[auto,labelsize] {$\xi\cdot(-)$} (1);
      \draw [<-] (2) to node[auto,labelsize] {$p_H$} (0);
      \draw [<-] (2) to node[auto,swap,labelsize] {$p_H$} (1);
\end{tikzpicture}
\]
commutes.

Let $G$ and $H$ be connected graphs and $f\colon G\to H$ a graph homomorphism. Choose vertices $u\in V(G)$ and $x\in V(H)$ with $f(u)=x$, and construct the universal covers $p_G\colon \widetilde G\to G$ and $p_H\colon \widetilde H\to H$ with $V(\widetilde G)=V(\Pi G)_u$ and $V(\widetilde H)=V(\Pi H)_x$. Then we obtain a graph homomorphism $\widetilde f\colon \widetilde G\to \widetilde H$ mapping each $\omega\in V(\widetilde G)$ to $\widetilde f(\omega)=\overline{f(\omega)}\in V(\widetilde H)$. This graph homomorphism makes the diagram 
\begin{equation}\label{eqn:f-tilde}
\begin{tikzpicture}[baseline=-\the\dimexpr\fontdimen22\textfont2\relax ]
      \node(0) at (0,1) {$\widetilde G$};
      \node(1) at (3,1) {$\widetilde H$};
      \node(2) at (0,-1) {$G$};
      \node(3) at (3,-1) {$H$};
      \draw [->] (0) to node[auto,labelsize] {$\widetilde f$} (1);
      \draw [<-] (2) to node[auto,labelsize] {$p_G$} (0);
      \draw [->] (1) to node[auto,labelsize] {$p_H$} (3);
      \draw [->] (2) to node[auto,labelsize] {$f$} (3);
\end{tikzpicture}   
\end{equation}
commute. 
Let $\Gamma=\pi_1(G,u)$. 
By the above discussion, $\widetilde G$ has a natural $\Gamma$-graph structure; similarly, $\widetilde H$ has a natural $\pi_1(H,x)$-graph structure.
The group homomorphism 
$f_\ast\colon \Gamma\to \pi_1(H,x)$ makes $\widetilde H$ into a $\Gamma$-graph as well (see the final paragraph of \cref{subsec:group-action}).
It is easy to see that $\widetilde f$ is a $\Gamma$-equivariant graph homomorphism, i.e., $\widetilde f\in \Hom^\Gamma(\widetilde G,\widetilde H)$.
We have a monotone map $\beta\colon \Hom^\Gamma(\widetilde G,\widetilde H)\to \Hom(G,H)$ mapping each $\psi\in \Hom^\Gamma(\widetilde G,\widetilde H)$ to $\beta(\psi)\colon G\pto H$ defined as follows. 
For each $v\in V(G)$, choose $\widetilde v\in V(\widetilde G)$ with $p_G(\widetilde v)=v$, and define $\beta(\psi)(v)=\{\,p_H(\widetilde y)\mid \widetilde y\in \psi(\widetilde v)\,\}$.
This definition does not depend on the choice of $\widetilde v\in p_G^{-1}(v)$ because $\Gamma$ acts transitively on $p_G^{-1}(v)$ and $\psi$ is $\Gamma$-equivariant. 

Now we can state the main result of this appendix.
\begin{theorem}\label{thm:univ-cov-via-tilde}
    Let $G$ and $H$ be connected graphs and $f\colon G\to H$ a graph homomorphism. Let $p_G\colon \widetilde G\to G$ and $p_H\colon \widetilde H\to H$ be the universal covers. Let $\Gamma=\pi_1(G)$, and make $\widetilde G$ and $\widetilde H$ into $\Gamma$-graphs as above. 
    Then there exists a poset isomorphism $\Psi\colon \Hom(G,\Pi H)_f\to \Hom^\Gamma(\widetilde G,\widetilde H)$ making the diagram 
    \begin{equation}\label{eqn:Psi-commutativity}
\begin{tikzpicture}[baseline=-\the\dimexpr\fontdimen22\textfont2\relax ]
      \node(0) at (0,1) {$\Hom(G,\Pi H)_f$};
      \node(1) at (4,1) {$\Hom^\Gamma(\widetilde G,\widetilde H)$};
      \node(2) at (2,-1) {$\Hom(G,H)$};
      \draw [->] (0) to node[auto,labelsize] {$\Psi$} (1);
      \draw [<-] (2) to node[auto,labelsize] {$\Hom(G,t)|_{\Hom(G,\Pi H)_f}$} (0);
      \draw [<-] (2) to node[auto,swap,labelsize] {$\beta$} (1);
\end{tikzpicture}
    \end{equation}
    commute. 
\end{theorem}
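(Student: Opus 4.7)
The plan is to write down $\Psi$ explicitly and produce an inverse. Fix the basepoints $u\in V(G)$ and $x=f(u)\in V(H)$ used to realize the universal covers as $V(\widetilde G)=V(\Pi G)_u$ and $V(\widetilde H)=V(\Pi H)_x$. For $\varphi\in \Hom(G,\Pi H)_f$ and $\omega\in V(\widetilde G)$, set
\[
\Psi(\varphi)(\omega) = \bigl\{\,\overline{f(\omega)}\cdot \xi \;\big\vert\;\xi\in \varphi\bigl(t(\omega)\bigr)\,\bigr\}.
\]
Since $s\varphi=f$, each $\xi$ starts at $f\bigl(t(\omega)\bigr)=t\bigl(f(\omega)\bigr)$, so the reduced-walk product is defined; and because $s\bigl(\overline{f(\omega)}\cdot \xi\bigr)=x$, the result lies in $V(\widetilde H)$.

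The first task is to check that $\Psi(\varphi)$ is a $\Gamma$-equivariant set-valued homomorphism $\widetilde G\pto \widetilde H$. Equivariance follows from the identity $\overline{f(\gamma\cdot\omega)}=\overline{f(\gamma)}\cdot\overline{f(\omega)}=f_\ast(\gamma)\cdot \overline{f(\omega)}$, valid for any $\gamma\in\Gamma=\pi_1(G,u)$. For the adjacency requirement: if $\omega_1,\omega_2\in V(\widetilde G)$ are adjacent with $t(\omega_i)=v_i$, then by definition $\omega_1=\omega_2\cdot (v_2,v_1)$, whence a short computation with reduced walks gives $\overline{f(\omega_1)}\cdot\bigl(f(v_1),f(v_2)\bigr)=\overline{f(\omega_2)}$; combining this with the adjacency of arbitrary $\xi_i\in\varphi(v_i)$ in $\Pi H$ (an instance of \cref{prop:composition-on-PiH-is-graph-hom}) yields $\overline{f(\omega_1)}\cdot \xi_1=\overline{f(\omega_2)}\cdot \xi_2\cdot \bigl(t(\xi_2),t(\xi_1)\bigr)$, which is precisely the adjacency in $\widetilde H$.

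Next I would construct the inverse. Given $\psi\in \Hom^\Gamma(\widetilde G,\widetilde H)$ and $v\in V(G)$, choose any reduced walk $\omega$ from $u$ to $v$ in $G$ and set
\[
\Psi^{-1}(\psi)(v) = \bigl\{\,\overline{f(\omega)}^{-1}\cdot \eta \;\big\vert\; \eta\in \psi(\omega)\,\bigr\}.
\]
The main obstacle is that this definition \emph{a priori} depends on $\omega$, and it is here that $\Gamma$-equivariance of $\psi$ is essential. For another walk $\omega'$ from $u$ to $v$, the element $\gamma = \omega\cdot \omega'^{-1}\in \Gamma$ satisfies $\omega=\gamma\cdot \omega'$ in $\widetilde G$, so equivariance gives $\psi(\omega)=f_\ast(\gamma)\cdot \psi(\omega')=\overline{f(\gamma)}\cdot \psi(\omega')$; expanding $\overline{f(\omega)}^{-1}=\overline{f(\omega')}^{-1}\cdot \overline{f(\gamma)}^{-1}$ then shows $\overline{f(\omega)}^{-1}\cdot \eta=\overline{f(\omega')}^{-1}\cdot \eta'$ for the corresponding $\eta=\overline{f(\gamma)}\cdot \eta'$. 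A symmetric adjacency calculation to the one above shows $\Psi^{-1}(\psi)\colon G\pto \Pi H$ is a set-valued homomorphism, and it manifestly satisfies $s\Psi^{-1}(\psi)=f$.

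Finally, the relations $\Psi^{-1}\circ \Psi=\id$ and $\Psi\circ \Psi^{-1}=\id$ follow from the cancellation $\overline{f(\omega)}^{-1}\cdot \overline{f(\omega)}\cdot \xi=\xi$ and its mirror image; both maps are manifestly monotone since they are defined elementwise on set-values, so $\Psi$ is a poset isomorphism. Commutativity of \cref{eqn:Psi-commutativity} is the pointwise identity $t\bigl(\overline{f(\omega)}\cdot \xi\bigr)=t(\xi)$: choosing $\widetilde v=\omega\in p_G^{-1}(v)$, one computes $\beta\bigl(\Psi(\varphi)\bigr)(v)=\{\,t(\xi)\mid \xi\in\varphi(v)\,\}=t\varphi(v)$, which is exactly the image of $\varphi$ under $\Hom(G,t)|_{\Hom(G,\Pi H)_f}$.
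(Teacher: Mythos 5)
Your proof is correct, and it constructs exactly the same poset isomorphism as the paper: with the concrete model $V(\widetilde H)=V(\Pi H)_x$, one checks that $\lift_{\widetilde f(\omega)}(\xi)=\overline{f(\omega)}\cdot\xi$, so your formula for $\Psi$ is literally the paper's $\Psi(\varphi)(\widetilde v)=\lift_{\widetilde f(\widetilde v)}\bigl(\varphi(p_G(\widetilde v))\bigr)$ unpacked. The substantive difference is in style rather than content. The paper invests in the abstract $\lift$/$\proj$ machinery --- \cref{prop:tree-PiH} through \cref{prop:lift-proj-equivariance}, which rest on $\langle s,t\rangle$ being an isomorphism for trees and on $s\colon\Pi(-)\to(-)$ having the pullback property along covering maps --- and then deduces that $\Psi$ preserves adjacency and is $\Gamma$-equivariant from those general facts; you instead prove adjacency preservation and equivariance by direct manipulation of reduced-walk products, which is more elementary and self-contained, at the cost of not packaging the argument in the form the paper reuses to prove \cref{thm:univ-cov-Galois}.

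One small slip: the adjacency step is not really ``an instance of \cref{prop:composition-on-PiH-is-graph-hom}.'' To apply that proposition with $\xi_1\mapsto\overline{f(\omega_1)}$, $\eta_1\mapsto\overline{f(\omega_2)}$, $\xi_2\mapsto\xi_1$, $\eta_2\mapsto\xi_2$, one would need $\overline{f(\omega_1)}$ and $\overline{f(\omega_2)}$ to be adjacent in $\Pi H$; but both have source $x$, and adjacency in $\Pi H$ requires adjacent (hence distinct) sources, so they are never adjacent. What actually carries the step is the direct computation
\[
\overline{f(\omega_1)}\cdot\xi_1=\overline{f(\omega_1)}\cdot\bigl(f(v_1),f(v_2)\bigr)\cdot\xi_2\cdot\bigl(t(\xi_2),t(\xi_1)\bigr)=\overline{f(\omega_2)}\cdot\xi_2\cdot\bigl(t(\xi_2),t(\xi_1)\bigr),
\]
which you perform correctly and which matches the defining condition for $E(\widetilde H)$; the citation should simply be removed (or replaced by a remark that $\Pi f$ respects the reduced-walk product, which is the identity being used repeatedly and is left implicit both in your proof and in the paper).
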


\subsection{Properties of $\Pi H$}\label{subsec:properties-of-Pi}
In order to prove \cref{thm:univ-cov-via-tilde}, in this subsection we establish some properties of graphs of the form $\Pi H$. 

\begin{proposition}\label{prop:tree-PiH}
    Let $H$ be a tree. Then the function $\langle s,t\rangle\colon V(\Pi H)\to V(H)\times V(H)$ is a bijection.
    In particular, for any $x\in V(H)$, the map $t\colon V(\Pi H)_x\to V(H)$ is a bijection.
\end{proposition}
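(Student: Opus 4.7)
The plan is to verify that $\langle s,t\rangle$ is surjective and injective separately, since the statement of interest is purely set-theoretic. Surjectivity is immediate from connectedness of $H$: for any $(x,y)\in V(H)\times V(H)$, since a tree is connected there exists a walk in $H$ from $x$ to $y$, and its reduction (via the iterated elementary reductions recalled in \cref{subsec:walk}) is a reduced walk $\xi$ with $s(\xi)=x$ and $t(\xi)=y$.

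For injectivity, the plan is to reduce the problem to the classical fact that any two vertices in a tree are joined by a unique path. The key intermediate step is the claim that in a tree, every reduced walk is a \emph{path} in the sense that it has no repeated vertices. Assuming this, given two reduced walks $\xi$ and $\eta$ with $s(\xi)=s(\eta)$ and $t(\xi)=t(\eta)$, both are paths between the same pair of vertices in the tree $H$, hence equal.

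The main work is therefore the lemma that a reduced walk $(x_0,x_1,\dots,x_k)$ in a tree $H$ has pairwise distinct entries. I would argue by contradiction: suppose $x_i=x_j$ with $0\leq i<j\leq k$ and $j-i$ minimal. The case $j-i=1$ is ruled out because $H$ has no loops, and $j-i=2$ is ruled out by the definition of reducedness ($x_i\neq x_{i+2}$). Hence $j-i\geq 3$, and minimality of $j-i$ forces $x_i,x_{i+1},\dots,x_{j-1}$ to be pairwise distinct. Then $(x_i,x_{i+1},\dots,x_{j-1},x_j=x_i)$ is a cycle in $H$ of length $j-i\geq 3$, contradicting the assumption that $H$ is a tree. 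I do not anticipate any substantial obstacle here; the proof is a standard minimality argument.

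Finally, the ``in particular'' clause is a direct consequence: the bijection $\langle s,t\rangle$ restricts to a bijection between $V(\Pi H)_x=s^{-1}(x)$ and $\{x\}\times V(H)$, and composing with the projection to the second coordinate yields the bijection $t\colon V(\Pi H)_x\to V(H)$.
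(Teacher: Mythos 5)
Your proof is correct and takes essentially the same approach as the paper, which simply invokes the standard fact that any two vertices in a tree are joined by a unique reduced walk. You supply a careful proof of that fact (reduced walks in a tree are paths, plus uniqueness of paths), where the paper cites it without argument.
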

\begin{proof}
    In the tree $H$, given any two vertices $x,y\in V(H)$, there exists a unique reduced walk from $x$ to $y$ in $H$. Therefore $\langle s,t\rangle\colon V(\Pi H)\to V(H\times H)$ is a bijection. 
    The second assertion follows from the first by restriction. 
\end{proof}

\begin{corollary}\label{cor:st-iso}
    Let $H$ be a tree. Then the graph homomorphism $\langle s,t\rangle\colon \Pi H\to H\times H$ is a graph isomorphism.
\end{corollary}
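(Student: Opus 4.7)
The plan is to assemble two ingredients already at our disposal: \cref{prop:tree-PiH} establishes that the underlying vertex map of $\langle s,t\rangle\colon \Pi H\to H\times H$ is a bijection when $H$ is a tree, and \cref{prop:st-covering} asserts (for any graph $H$) that $\langle s,t\rangle$ is a covering map of graphs. The corollary then amounts to the general principle that a bijective covering map of graphs is automatically a graph isomorphism.

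The one nontrivial thing to check is edge reflection: if $\xi,\eta\in V(\Pi H)$ and $\langle s,t\rangle(\xi)$ and $\langle s,t\rangle(\eta)$ are adjacent in $H\times H$, then $\xi\eta\in E(\Pi H)$. To see this I would invoke the covering property at $\xi$: by \cref{prop:st-covering}, the restriction $\langle s,t\rangle|_{N_{\Pi H}(\xi)}\colon N_{\Pi H}(\xi)\to N_{H\times H}\bigl(\langle s,t\rangle(\xi)\bigr)$ is a bijection, so there is some (in fact unique) $\eta'\in N_{\Pi H}(\xi)$ with $\langle s,t\rangle(\eta')=\langle s,t\rangle(\eta)$. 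Global injectivity of $\langle s,t\rangle$ on $V(\Pi H)$, provided by \cref{prop:tree-PiH}, forces $\eta'=\eta$, whence $\xi\eta\in E(\Pi H)$. Edge preservation is automatic because $\langle s,t\rangle$ is a graph homomorphism, and vertex bijectivity is \cref{prop:tree-PiH}; together these say $\langle s,t\rangle$ is a graph isomorphism.

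There is no real obstacle here: the statement is essentially a sanity check that the two propositions fit together, and the proof is a two-line synthesis. If one wanted a more self-contained argument avoiding an appeal to \cref{prop:st-covering}, one could alternatively describe the inverse explicitly, sending an adjacent pair $(x,x')(y,y')\in E(H\times H)$ to the reduced walk in $H$ from $(x,x')$ to $(y,y')$ that factors through the unique reduced walk from $x$ to $y$, but this is just unwinding the proofs of the two cited propositions.
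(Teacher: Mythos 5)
Your proof is correct and takes essentially the same approach as the paper, which likewise invokes \cref{prop:tree-PiH} for vertex bijectivity and \cref{prop:st-covering} for the covering property, then cites the fact that a bijective covering map of graphs is a graph isomorphism. The only difference is that you spell out the edge-reflection verification that the paper leaves implicit.
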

\begin{proof}
    The claim follows from \cref{prop:tree-PiH,prop:st-covering} because any bijective covering map of graphs is a graph isomorphism. 
\end{proof}

For any graph homomorphism $p\colon G\to H$, let $\Pi p\colon \Pi G\to \Pi H$ be the graph homomorphism defined by $(\Pi p)(\omega)=\overline{p(\omega)}$ for each $\omega\in V(\Pi G)$ (see \cref{rmk:adjacency-between-non-reduced-walks}). When $p$ is a covering map of graphs, we have $(\Pi p)(\omega)=p(\omega)$ for each $\omega\in V(\Pi G)$ by \cref{prop:covering-preserves-reduced-walks}.

\begin{proposition}\label{prop:s-cover-cartesian}
    Let $p\colon G\to H$ be a covering map of graphs. Then the commutative square 
\begin{equation}\label{eqn:s-cover-cartesian}
\begin{tikzpicture}[baseline=-\the\dimexpr\fontdimen22\textfont2\relax ]
      \node(0) at (0,1) {$\Pi G$};
      \node(1) at (3,1) {$\Pi H$};
      \node(2) at (0,-1) {$G$};
      \node(3) at (3,-1) {$H$};
      \draw [->] (0) to node[auto,labelsize] {$\Pi p$} (1);
      \draw [<-] (2) to node[auto,labelsize] {$s$} (0);
      \draw [->] (1) to node[auto,labelsize] {$s$} (3);
      \draw [->] (2) to node[auto,labelsize] {$p$} (3);
\end{tikzpicture}
\end{equation}
    is a pullback of graphs. In other words, the graph homomorphism $\langle s,\Pi p\rangle \colon \Pi G\to G\times \Pi H$, mapping $\omega\in V(\Pi G)$ to $\bigl(s(\omega),p(\omega)\bigr)\in V(G)\times V(\Pi H)$, induces an isomorphism of graphs between $\Pi G$ and the induced subgraph $G\times_H \Pi H$ of $G\times \Pi H$ determined by the subset
    \[
    V(G\times_H \Pi H)=\bigl\{\,(v,\xi)\in V(G)\times V(\Pi H)\;\vert\; p(v)=s(\xi)\,\bigr\}
    \]
    of $V(G\times \Pi H)$.
\end{proposition}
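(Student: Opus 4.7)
The plan is to show that $\Psi':=\langle s,\Pi p\rangle$ factors through $G\times_H\Pi H\subseteq G\times\Pi H$ and that the resulting map $\Psi'\colon\Pi G\to G\times_H\Pi H$ is a graph isomorphism, by verifying that it is bijective on vertices and that its inverse is a graph homomorphism. That $\Psi'$ lands in $G\times_H\Pi H$ (and hence that the square \cref{eqn:s-cover-cartesian} commutes) follows from a single observation: by \cref{prop:covering-preserves-reduced-walks}, for any $\omega\in V(\Pi G)$ the walk $p(\omega)$ is already reduced, so $\Pi p(\omega)=p(\omega)$ and $p(s(\omega))=s(p(\omega))$. Moreover, $\Psi'$ is automatically a graph homomorphism into the induced subgraph $G\times_H\Pi H$ since it is a graph homomorphism into $G\times\Pi H$ whose image lies in the subset defining the subgraph.

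For the bijection on vertices, I will invoke the classical unique path-lifting property of covering maps of graphs. Given $(v,\xi)\in V(G\times_H\Pi H)$ with $\xi=(x_0,\ldots,x_\ell)$ and $x_0=p(v)$, the bijectivity of $p|_{N_G(w)}\colon N_G(w)\to N_H(p(w))$ for each $w\in V(G)$ yields, inductively, a unique walk $\omega=(w_0,\ldots,w_\ell)$ in $G$ with $w_0=v$ and $p(w_i)=x_i$ for all $i$. This walk is reduced because $x_i\neq x_{i+2}$ forces $w_i\neq w_{i+2}$, so $\omega\in V(\Pi G)$ and $\Psi'(\omega)=(v,\xi)$. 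The uniqueness of the lift at each step also gives injectivity.

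For the inverse to be a graph homomorphism, suppose $(v_1,\xi_1)$ and $(v_2,\xi_2)$ are adjacent in $G\times_H\Pi H$, and let $\omega_1,\omega_2$ be their respective vertex-lifts from the previous step; I will show $\omega_1\omega_2\in E(\Pi G)$ using the covering property from \cref{prop:st-covering} applied to $\Pi G$. The neighbors of $\omega_1$ in $\Pi G$ are parameterized by pairs $(a,b)\in N_G(s(\omega_1))\times N_G(t(\omega_1))$. I take $a=v_2$, and, noting that $t(\xi_2)\in N_H(t(\xi_1))$ by \cref{def:adjacency-reduced-walks}, I let $b\in N_G(t(\omega_1))$ be the unique preimage of $t(\xi_2)$ under $p$. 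Writing $\omega'$ for the corresponding neighbor of $\omega_1$ in $\Pi G$, the fact that $\Pi p$ is a graph homomorphism (\cref{rmk:adjacency-between-non-reduced-walks}) gives that $p(\omega')=\Pi p(\omega')$ is adjacent to $\xi_1=\Pi p(\omega_1)$ in $\Pi H$, with $s(p(\omega'))=p(v_2)=s(\xi_2)$ and $t(p(\omega'))=p(b)=t(\xi_2)$. Applying \cref{prop:st-covering} to $\Pi H$, such a neighbor of $\xi_1$ is unique, so $p(\omega')=\xi_2$; combined with $s(\omega')=v_2$, the uniqueness of the vertex-lift forces $\omega'=\omega_2$, and hence $\omega_1\omega_2\in E(\Pi G)$.

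The main obstacle is bridging the vertex-level path-lifting to the edge-level compatibility, since adjacency in $\Pi G$ encodes a nontrivial equation between products of reduced walks (\cref{def:adjacency-reduced-walks}). The crucial idea, borrowed from the proof of \cref{prop:st-covering}, is to bypass the case analysis of \cref{prop:adjacency-cases} and instead characterize neighbors of a reduced walk by their source/target pairs, thereby reducing edge compatibility to vertex-level uniqueness of lifts.
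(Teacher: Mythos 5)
Your proof is correct, and it takes a genuinely different route from the paper's. The paper sketches the edge-level compatibility --- showing that the lifts $\widehat\xi$ and $\widehat\eta$ of adjacent $(v,\xi)$ and $(w,\eta)$ are adjacent in $\Pi G$ --- by appealing to an unspecified induction on $\len(\xi)+\len(\eta)$, which in effect reproduces a fragment of the case analysis underlying \cref{prop:adjacency-cases}. You instead observe that \cref{prop:st-covering} has already packaged exactly the information needed: since $\langle s,t\rangle\colon \Pi G\to G\times G$ is a covering map, the neighbors of a reduced walk $\omega_1$ in $\Pi G$ are classified by their source/target pair in $N_G(s(\omega_1))\times N_G(t(\omega_1))$, and likewise in $\Pi H$. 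Picking the neighbor $\omega'$ of $\omega_1$ with $s(\omega')=v_2$ and $t(\omega')$ the unique lift of $t(\xi_2)$, then pushing forward to $\Pi H$ and using the uniqueness half of \cref{prop:st-covering} there, you identify $\Pi p(\omega')$ with $\xi_2$; the uniqueness of vertex lifts then forces $\omega'=\omega_2$, so $\omega_1\omega_2\in E(\Pi G)$. This buys you a cleaner, case-free argument for the edge compatibility: all the combinatorial content of adjacency in $\Pi G$ and $\Pi H$ is hidden inside \cref{prop:st-covering}, which the paper has already established, and the induction on $\len(\xi)+\len(\eta)$ is avoided entirely. The only step where you and the paper genuinely coincide is the vertex-level bijection, which both treat by induction on $\len(\xi)$ (unique path lifting).

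One small presentational point: your injectivity claim --- ``the uniqueness of the lift at each step also gives injectivity'' --- is correct but compressed. What you really want to say is that the inductive construction shows that for each $(v,\xi)\in V(G\times_H\Pi H)$ there is exactly one $\omega\in V(\Pi G)$ with $s(\omega)=v$ and $\Pi p(\omega)=\xi$; since every $\omega\in V(\Pi G)$ is such a lift of $\Psi'(\omega)=\bigl(s(\omega),\Pi p(\omega)\bigr)$, two reduced walks with the same $\Psi'$-image must coincide. Spelling this out would make the argument self-contained.
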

\begin{proof}
    Given $v\in V(G)$ and $\xi\in V(\Pi H)$ with $p(v)=s(\xi)$, there exists a unique $\widehat\xi\in V(\Pi G)$ with $s(\widehat \xi)=v$ and $(\Pi p)(\widehat \xi)=\xi$, as can be easily seen by induction on $\len(\xi)$. It remains to show the following: given $v,w\in V(G)$ and $\xi,\eta\in V(\Pi H)$ such that $p(v)=s(\xi)$ and $p(w)=s(\eta)$, if $vw\in E(G)$ and $\xi\eta\in E(\Pi H)$, then $\widehat \xi\widehat \eta\in E(\Pi G)$, where $\widehat \xi,\widehat \eta\in V(\Pi G)$ are the unique elements such that $s(\widehat \xi)=v$, $(\Pi p)(\widehat \xi)=\xi$, $s(\widehat \eta)=w$, and $(\Pi p)(\widehat \eta)=\eta$. 
    This can be seen by induction on $\len(\xi)+\len(\eta)$.
\end{proof}

\begin{corollary}
\label{prop:covering-Pi}
    Let $p\colon G\to H$ be a covering map of graphs and $v\in V(G)$. Then the map $\Pi p\colon V(\Pi G)_v\to V(\Pi H)_{p(v)}$, defined by mapping $\omega\in V(\Pi G)_v$ to $p(\omega)\in V(\Pi H)_{p(v)}$, is a bijection.
\end{corollary}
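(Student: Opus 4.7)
The plan is to deduce this bijection directly from the pullback square in Proposition~\ref{prop:s-cover-cartesian}, by passing to the fiber over $v$ along the source map $s$. More precisely, the isomorphism $\langle s,\Pi p\rangle\colon \Pi G\to G\times_H\Pi H$ established there restricts, on vertex sets, to a bijection between the preimages of $v\in V(G)$ under the two source maps. The preimage of $v$ under $s\colon V(\Pi G)\to V(G)$ is by definition $V(\Pi G)_v$, and the preimage of $v$ under the projection $G\times_H \Pi H\to G$ is $\{v\}\times V(\Pi H)_{p(v)}$ (since the condition $p(v)=s(\xi)$ is exactly $s(\xi)=p(v)$). So the isomorphism sends $\omega\in V(\Pi G)_v$ to $\bigl(v,(\Pi p)(\omega)\bigr)$, giving, after dropping the first coordinate, a bijection $V(\Pi G)_v\to V(\Pi H)_{p(v)}$ that sends $\omega\mapsto (\Pi p)(\omega)$.

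It then only remains to identify $(\Pi p)(\omega)$ with $p(\omega)$ for $\omega\in V(\Pi G)$. This is immediate from Proposition~\ref{prop:covering-preserves-reduced-walks}: since $p$ is a covering map of graphs, the walk $p(\omega)$ in $H$ is already reduced whenever $\omega$ is a reduced walk in $G$, so $(\Pi p)(\omega)=\overline{p(\omega)}=p(\omega)$. Combining the two steps yields the claim, with essentially no additional work.

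There is no serious obstacle: everything has been set up in Proposition~\ref{prop:s-cover-cartesian} and Proposition~\ref{prop:covering-preserves-reduced-walks}, and the corollary is just the statement that a pullback square of graphs induces bijections on fibers, specialized to the situation at hand.
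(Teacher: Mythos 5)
Your argument is correct and is essentially the paper's own proof: the paper likewise deduces the bijection from the pullback square of \cref{prop:s-cover-cartesian} restricted to fibers over $v$, and the identification $(\Pi p)(\omega)=p(\omega)$ for covering maps (via \cref{prop:covering-preserves-reduced-walks}) is already noted there just before \cref{prop:s-cover-cartesian}. You have merely spelled out the fiberwise restriction that the paper states in one line.
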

\begin{proof}
    The pullback square \cref{eqn:s-cover-cartesian} induces the stated bijection between the fibers. Alternatively, see the first sentence of the proof of \cref{prop:s-cover-cartesian}.
\end{proof}

\cref{prop:covering-Pi} is an analogue of the unique path lifting property of covering maps of topological spaces.

\begin{proposition}
\label{prop:proj-adj}
    Let $H$ be a connected graph, $p\colon \widetilde H\to H$ its universal cover, and $\widetilde x_1,\widetilde x_2$ adjacent vertices in $\widetilde H$. 
    Then, for any $\widetilde \xi_1\in V(\Pi \widetilde H)_{\widetilde x_1}$ and $\widetilde \xi_2\in V(\Pi \widetilde H)_{\widetilde x_2}$, the following conditions are equivalent.
    \begin{enumerate}[label=\emph{({\arabic*})}]
        \item $\widetilde\xi_1$ and $\widetilde\xi_2$ are adjacent in $\Pi \widetilde H$.
        \item $t(\widetilde \xi_1)$ and $t(\widetilde \xi_2)$ are adjacent in $\widetilde H$. 
        \item $(\Pi p)(\widetilde \xi_1)=p(\widetilde \xi_1)$ and $(\Pi p)(\widetilde \xi_2)=p(\widetilde \xi_2)$ are adjacent in $\Pi H$.
    \end{enumerate}
\end{proposition}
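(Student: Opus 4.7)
The plan is to deduce both equivalences from the structural properties of $\Pi H$ already established earlier in the appendix, rather than unpacking the adjacency conditions (A1)--(A5) directly. The key observation is that $\widetilde{H}$ is a tree and $p\colon \widetilde H\to H$ is a covering map, so we can invoke \cref{cor:st-iso} and \cref{prop:s-cover-cartesian} respectively, and read off the claim.

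For $(1)\iff(2)$, I would apply \cref{cor:st-iso}: since $\widetilde H$ is a tree, $\langle s,t\rangle\colon \Pi\widetilde H\to \widetilde H\times\widetilde H$ is an isomorphism of graphs. Adjacency in the categorical product $\widetilde H\times\widetilde H$ is coordinatewise, so two vertices $\widetilde\xi_1,\widetilde\xi_2$ of $\Pi\widetilde H$ are adjacent if and only if both $s(\widetilde\xi_1)s(\widetilde\xi_2)\in E(\widetilde H)$ and $t(\widetilde\xi_1)t(\widetilde\xi_2)\in E(\widetilde H)$ hold. The hypothesis $\widetilde\xi_i\in V(\Pi\widetilde H)_{\widetilde x_i}$ gives $s(\widetilde\xi_i)=\widetilde x_i$, and $\widetilde x_1\widetilde x_2\in E(\widetilde H)$ is assumed, so the first condition is automatic and we are left precisely with $(2)$.

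For $(1)\iff(3)$, I would use \cref{prop:s-cover-cartesian} applied to the covering map $p\colon \widetilde H\to H$: the map $\langle s,\Pi p\rangle\colon \Pi\widetilde H\to \widetilde H\times_H\Pi H$ is an isomorphism of graphs onto the induced subgraph $\widetilde H\times_H\Pi H$ of $\widetilde H\times\Pi H$. Since adjacency in $\widetilde H\times\Pi H$, and hence in the induced subgraph, is coordinatewise, $\widetilde\xi_1$ and $\widetilde\xi_2$ are adjacent in $\Pi\widetilde H$ if and only if $\widetilde x_1\widetilde x_2\in E(\widetilde H)$ and $(\Pi p)(\widetilde\xi_1)(\Pi p)(\widetilde\xi_2)\in E(\Pi H)$. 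By \cref{prop:covering-preserves-reduced-walks} the walk $p(\widetilde\xi_i)$ is already reduced, so $(\Pi p)(\widetilde\xi_i)=p(\widetilde\xi_i)$, and the remaining condition is precisely $(3)$.

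Combining these two equivalences yields the proposition. Nothing here is a real obstacle: both nontrivial inputs (the tree-isomorphism for $\Pi$ and the pullback property over a covering) have been proved, and the role of this proposition is simply to translate them into a statement about adjacency with prescribed sources. If anything needs slight care, it is only the verification that adjacency in an induced subgraph of a categorical product remains coordinatewise, which is immediate from the definition of induced subgraph together with the product adjacency.
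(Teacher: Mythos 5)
Your proposal is correct and follows exactly the paper's own two-line proof: use \cref{cor:st-iso} for $(1)\iff(2)$ and \cref{prop:s-cover-cartesian} for $(1)\iff(3)$. You simply spell out in more detail the bookkeeping (coordinatewise adjacency in the product, and the reduction $(\Pi p)(\widetilde\xi_i)=p(\widetilde\xi_i)$ via \cref{prop:covering-preserves-reduced-walks}) that the paper leaves implicit.
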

\begin{proof}
    (1) and (2) are equivalent by \cref{cor:st-iso} since $\widetilde H$ is a tree.
    (1) and (3) are equivalent by \cref{prop:s-cover-cartesian} since $p\colon \widetilde H\to H$ is a covering map of graphs. 
\end{proof}

Let $H$ be a connected graph and $p\colon \widetilde H\to H$ its universal cover. 
Take any vertex $\widetilde x\in V(\widetilde H)$. 
Since $\widetilde H$ is a tree, we have a bijection 
$t\colon V(\Pi \widetilde H)_{\widetilde x}\to V(\widetilde H)$ by \cref{prop:tree-PiH}. Since $p\colon\widetilde H\to H$ is a covering map, we have a bijection $\Pi p\colon V(\Pi \widetilde H)_{\widetilde x}\to V(\Pi H)_{p(\widetilde x)}$ by \cref{prop:covering-Pi}. 
We define $\lift_{\widetilde x}\colon V(\Pi H)_{p(\widetilde x)}\to V(\widetilde H)$ to be the composite of 
\[
\begin{tikzpicture}[baseline=-\the\dimexpr\fontdimen22\textfont2\relax ]
      \node(0) at (3,0) {$V(\Pi\widetilde H)_{\widetilde x}$};
      \node(1) at (0,0) {$V(\Pi H)_{p(\widetilde x)}$};
      \node(2) at (6,0) {$V(\widetilde H)$};
      \draw [<-] (0) to node[auto,swap,labelsize] {$(\Pi p)^{-1}$} node[auto,labelsize] {$\cong$} (1);
      \draw [<-] (2) to node[auto,swap,labelsize] {$t$} node[auto,labelsize] {$\cong$}  (0);
\end{tikzpicture}
\]
and $\proj_{\widetilde x}\colon  V(\widetilde H)\to V(\Pi H)_{p(\widetilde x)}$ to be 
the composite of 
\[
\begin{tikzpicture}[baseline=-\the\dimexpr\fontdimen22\textfont2\relax ]
      \node(0) at (3,0) {$V(\Pi\widetilde H)_{\widetilde x}$};
      \node(1) at (6,0) {$V(\Pi H)_{p(\widetilde x)}$.};
      \node(2) at (0,0) {$V(\widetilde H)$};
      \draw [->] (0) to node[auto,labelsize] {$\Pi p$} node[auto,swap,labelsize] {$\cong$} (1);
      \draw [->] (2) to node[auto,labelsize] {$t^{-1}$} node[auto,swap,labelsize] {$\cong$}  (0);
\end{tikzpicture}
\]
Clearly $\lift_{\widetilde x}$ and $\proj_{\widetilde x}$ are inverse to each other. 
Moreover, since the diagram 
\[
\begin{tikzpicture}[baseline=-\the\dimexpr\fontdimen22\textfont2\relax ]
      \node(0) at (0,1) {$V(\Pi\widetilde H)_{\widetilde x}$};
      \node(1) at (3,1) {$V(\Pi H)_{p(\widetilde x)}$};
      \node(2) at (0,-1) {$V(\widetilde H)$};
      \node(3) at (3,-1) {$V(H)$};
      \draw [->] (0) to node[auto,labelsize] {$\Pi p$} node[auto,swap,labelsize] {$\cong$} (1);
      \draw [<-] (2) to node[auto,labelsize] {$t$} node[auto,swap,labelsize] {$\cong$}  (0);
      \draw [->] (1) to node[auto,labelsize] {$t$} (3);
      \draw [->] (2) to node[auto,labelsize] {$p$} (3);
\end{tikzpicture}
\]
commutes, we have 
\begin{equation}\label{eqn:lift-proj-fibred}
    p\bigl(\lift_{\widetilde x}(\xi)\bigr)=t(\xi)
    \quad\text{and}\quad
    t\bigl(\proj_{\widetilde x}(\widetilde y)\bigr)=p(\widetilde y)
\end{equation}
for any $\xi\in V(\Pi H)_{p_H(\widetilde x)}$ and $\widetilde y\in V(\widetilde H)$.

The following is an immediate consequence of \cref{prop:proj-adj}.
\begin{corollary}
\label{prop:lift-proj-pres-adj}
    Let $H$ be a connected graph, $p\colon \widetilde H\to H$ its universal cover, and $\widetilde x_1,\widetilde x_2$ adjacent vertices in $\widetilde H$. 
    \begin{enumerate}[label=\emph{({\arabic*})}]
        \item Let $\xi_1\in V(\Pi H)_{p(\widetilde x_1)}$ and $\xi_2\in V(\Pi H)_{p(\widetilde x_2)}$. If $\xi_1$ and $\xi_2$ are adjacent in $\Pi H$, then $\lift_{\widetilde x_1}(\xi_1)$ and $\lift_{\widetilde x_2}(\xi_2)$ are adjacent in $\widetilde H$.
        \item Let $\widetilde y_1\in V(\widetilde H)$ and $\widetilde y_2\in V(\widetilde H)$. If $\widetilde y_1$ and $\widetilde y_2$ are adjacent in $\widetilde H$, then $\proj_{\widetilde x_1}(\widetilde y_1)$ and $\proj_{\widetilde x_2}(\widetilde y_2)$ are adjacent in $\Pi H$.
    \end{enumerate}
\end{corollary}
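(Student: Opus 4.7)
The plan is to deduce both statements directly from Proposition~\ref{prop:proj-adj} by unwinding the definitions of $\lift_{\widetilde x}$ and $\proj_{\widetilde x}$ that appear just above. Recall that these maps are constructed as composites of two canonical bijections: $t\colon V(\Pi\widetilde H)_{\widetilde x}\to V(\widetilde H)$ (Proposition~\ref{prop:tree-PiH}, applicable because $\widetilde H$ is a tree) and $\Pi p\colon V(\Pi\widetilde H)_{\widetilde x}\to V(\Pi H)_{p(\widetilde x)}$ (Corollary~\ref{prop:covering-Pi}, applicable because $p$ is a covering map). The content of Proposition~\ref{prop:proj-adj} is that adjacency in the fibers of $\Pi\widetilde H$ lying over $\widetilde x_1$ and $\widetilde x_2$ corresponds, under these bijections, to adjacency both in $\widetilde H$ and in $\Pi H$, so the two statements of the corollary amount to ``transporting'' adjacency across these correspondences in opposite directions.

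For part~(1), I would introduce, for $i\in\{1,2\}$, the unique element $\widetilde\xi_i\in V(\Pi\widetilde H)_{\widetilde x_i}$ satisfying $(\Pi p)(\widetilde\xi_i)=\xi_i$, which exists by Corollary~\ref{prop:covering-Pi}. By definition of $\lift_{\widetilde x_i}$ we then have $\lift_{\widetilde x_i}(\xi_i)=t(\widetilde\xi_i)$. The hypothesis that $\xi_1$ and $\xi_2$ are adjacent in $\Pi H$ is precisely condition~(3) of Proposition~\ref{prop:proj-adj}, so by the implication $(3)\Rightarrow(2)$ we conclude that $t(\widetilde\xi_1)=\lift_{\widetilde x_1}(\xi_1)$ and $t(\widetilde\xi_2)=\lift_{\widetilde x_2}(\xi_2)$ are adjacent in $\widetilde H$, as required.

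Part~(2) is dual: for $i\in\{1,2\}$, set $\widetilde\xi_i\in V(\Pi\widetilde H)_{\widetilde x_i}$ to be the unique element with $t(\widetilde\xi_i)=\widetilde y_i$, which exists by Proposition~\ref{prop:tree-PiH}. By definition of $\proj_{\widetilde x_i}$ we have $\proj_{\widetilde x_i}(\widetilde y_i)=(\Pi p)(\widetilde\xi_i)$. The hypothesis that $\widetilde y_1$ and $\widetilde y_2$ are adjacent in $\widetilde H$ is condition~(2) of Proposition~\ref{prop:proj-adj}, and the implication $(2)\Rightarrow(3)$ yields that $(\Pi p)(\widetilde\xi_1)=\proj_{\widetilde x_1}(\widetilde y_1)$ and $(\Pi p)(\widetilde\xi_2)=\proj_{\widetilde x_2}(\widetilde y_2)$ are adjacent in $\Pi H$.

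There is no real obstacle here — the corollary is flagged as ``an immediate consequence'' in the preceding sentence, and indeed the entire content of the proof is bookkeeping to identify, in each direction, which of the three equivalent conditions of Proposition~\ref{prop:proj-adj} is given as hypothesis and which gives the conclusion. The only point that requires a touch of care is remembering that $\lift_{\widetilde x}$ applies $t$ after $(\Pi p)^{-1}$ while $\proj_{\widetilde x}$ applies $\Pi p$ after $t^{-1}$, so that part~(1) passes ``from $\Pi H$ to $\widetilde H$'' via $(3)\Rightarrow(2)$ and part~(2) passes ``from $\widetilde H$ to $\Pi H$'' via $(2)\Rightarrow(3)$.
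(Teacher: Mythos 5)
Your proof is correct and is exactly the intended argument: the paper itself presents the corollary as ``an immediate consequence of \cref{prop:proj-adj}'' with no written proof, and your unwinding of the definitions of $\lift_{\widetilde x}$ and $\proj_{\widetilde x}$ followed by applying the implications $(3)\Rightarrow(2)$ and $(2)\Rightarrow(3)$ of that proposition is precisely what the authors leave implicit.
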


\begin{proposition}\label{prop:lift-proj-equivariance}
    Let $H$ be a connected graph, $p\colon\widetilde H\to H$ its universal cover, and $\gamma\colon \widetilde H\to \widetilde H$ a graph homomorphism making the diagram
    \[
\begin{tikzpicture}[baseline=-\the\dimexpr\fontdimen22\textfont2\relax ]
      \node(0) at (0,0.5) {$\widetilde H$};
      \node(1) at (3,0.5) {$\widetilde H$};
      \node(2) at (1.5,-0.5) {$H$};
      \draw [->] (0) to node[auto,labelsize] {$\gamma$} (1);
      \draw [<-] (2) to node[auto,labelsize] {$p$} (0);
      \draw [<-] (2) to node[auto,swap,labelsize] {$p$} (1);
\end{tikzpicture}
\]
    commute.
    Then for any $\widetilde x,\widetilde y\in V(\widetilde H)$ and $\xi\in V(\Pi H)_{p(\widetilde x)}$,
    we have $\gamma \bigl(\lift_{\widetilde x}(\xi)\bigr)=\lift_{\gamma(\widetilde x)}(\xi)$ and $\proj_{\widetilde x}(\widetilde y)=\proj_{\gamma(\widetilde x)}\bigl(\gamma(\widetilde y)\bigr)$.
\end{proposition}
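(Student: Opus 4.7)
The plan is to reduce both identities to the uniqueness statements that define $\lift_{\widetilde x}$ and $\proj_{\widetilde x}$, together with the observation that $\gamma$ itself is a covering map of graphs. The preliminary step is to show that any graph endomorphism $\gamma\colon \widetilde H\to \widetilde H$ satisfying $p\gamma=p$ is automatically a covering map: for each $\widetilde v\in V(\widetilde H)$, both $p|_{N_{\widetilde H}(\widetilde v)}$ and $p|_{N_{\widetilde H}(\gamma(\widetilde v))}$ are bijections onto $N_H(p(\widetilde v))$, and the identity $p\gamma=p$ forces $\gamma|_{N_{\widetilde H}(\widetilde v)}\colon N_{\widetilde H}(\widetilde v)\to N_{\widetilde H}(\gamma(\widetilde v))$ to be a bijection as well. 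By \cref{prop:covering-preserves-reduced-walks}, $\gamma$ then sends reduced walks to reduced walks, and for any reduced walk $\widetilde\omega$ in $\widetilde H$ we have $p(\gamma(\widetilde\omega))=p(\widetilde\omega)$.

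For the first identity, I would unpack the definition: given $\xi\in V(\Pi H)_{p(\widetilde x)}$, let $\widetilde \xi=(\Pi p)^{-1}(\xi)\in V(\Pi\widetilde H)_{\widetilde x}$, so that $\lift_{\widetilde x}(\xi)=t(\widetilde\xi)$. Applying $\gamma$, the walk $\gamma(\widetilde\xi)$ is a reduced walk in $\widetilde H$ with source $\gamma(\widetilde x)$ satisfying $p(\gamma(\widetilde\xi))=p(\widetilde\xi)=\xi$, so by the uniqueness clause in \cref{prop:covering-Pi} it equals $(\Pi p)^{-1}(\xi)$ viewed now inside $V(\Pi\widetilde H)_{\gamma(\widetilde x)}$. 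Since $t$ commutes with $\gamma$ (as target is preserved by any graph homomorphism on walks), we conclude $\gamma(\lift_{\widetilde x}(\xi))=\gamma(t(\widetilde\xi))=t(\gamma(\widetilde\xi))=\lift_{\gamma(\widetilde x)}(\xi)$.

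The second identity is dual and uses the same idea: given $\widetilde y\in V(\widetilde H)$, let $\widetilde\eta\in V(\Pi\widetilde H)_{\widetilde x}$ be the unique reduced walk from $\widetilde x$ to $\widetilde y$ (which exists by \cref{prop:tree-PiH} since $\widetilde H$ is a tree), so $\proj_{\widetilde x}(\widetilde y)=p(\widetilde\eta)$. Then $\gamma(\widetilde\eta)$ is a reduced walk from $\gamma(\widetilde x)$ to $\gamma(\widetilde y)$, hence equal to $t^{-1}(\gamma(\widetilde y))$ in $V(\Pi\widetilde H)_{\gamma(\widetilde x)}$, again by \cref{prop:tree-PiH}. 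Applying $\Pi p$ and using $p\gamma=p$ gives $\proj_{\gamma(\widetilde x)}(\gamma(\widetilde y))=p(\gamma(\widetilde\eta))=p(\widetilde\eta)=\proj_{\widetilde x}(\widetilde y)$.

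There is no genuine obstacle: once the covering property of $\gamma$ is in hand, both equalities are essentially naturality statements for the unique lift of reduced walks along a covering map. The only point to watch is to distinguish the two instances of $(\Pi p)^{-1}$ (respectively $t^{-1}$) used on the two sides, based at $\widetilde x$ and at $\gamma(\widetilde x)$; their uniqueness properties are what make the argument go through.
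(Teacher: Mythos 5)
Your proof is correct and is essentially the paper's argument spelled out element by element: the paper establishes both identities by asserting commutativity of the prism diagram built from $t$, $\Pi p$, $\Pi\gamma$, and $\gamma$, and your unique-lift argument is precisely what makes each face of that diagram commute. Your preliminary observation that $p\gamma = p$ forces $\gamma$ to be a covering map of graphs (hence preserves reduced walks, by \cref{prop:covering-preserves-reduced-walks}) is a slightly stronger fact than strictly required — one can also see directly that $\gamma$ sends reduced walks to reduced walks because $p$ is a covering map and $p\gamma(\widetilde\omega) = p(\widetilde\omega)$ is reduced — but it is a clean way to package the point and streamlines the remaining verifications.
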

\begin{proof}
    These follow from the commutativity of the following diagram: 
\[
\begin{tikzpicture}[baseline=-\the\dimexpr\fontdimen22\textfont2\relax ]
      \node(0) at (0,3.5) {$V(\Pi\widetilde H)_{\widetilde x}$};
      \node(1) at (6,3.5) {$V(\Pi H)_{p(\widetilde x)}$};
      \node(2) at (0,1) {$V(\widetilde H)$};
      \node(3) at (6,1) {$V(H)$.};
      \node(4) at (3,2.5) {$V(\Pi \widetilde H)_{\gamma(\widetilde x)}$};
      \node(5) at (3,0) {$V(\widetilde H)$};
      \draw [->] (0) to node[auto,labelsize] {$\Pi p$} node[auto,swap,labelsize] {$\cong$} (1);
      \draw [<-] (2) to node[auto,labelsize] {$t$} node[auto,swap,labelsize] {$\cong$} (0);
      \draw [->] (1) to node[auto,labelsize] {$t$} (3);
      \draw [->] (2) to node[auto,near end,labelsize] {$p$} (3);
      \draw [->] (0) to node[auto,swap,labelsize] {$\Pi\gamma$} (4);
      \draw [->] (2) to node[auto,swap,labelsize] {$\gamma$} (5);
      \draw [->] (4) to node[auto,labelsize] {$\Pi p$} node[auto,swap,labelsize] {$\cong$} (1);
      \draw [->] (5) to node[auto,swap,labelsize] {$p$} (3);
      \draw[white,-,line width=4] (4) to (5);
      \draw [->] (4) to node[auto,near start,swap,labelsize] {$t$} node[auto,near start,labelsize] {$\cong$} (5);
\end{tikzpicture}\qedhere
\]
\end{proof}

\subsection{The proof of \cref{thm:univ-cov-via-tilde}}
Now we are ready to prove \cref{thm:univ-cov-via-tilde}. Throughout this subsection, let $G$ and $H$ be connected graphs and $f\colon G\to H$ a graph homomorphism. We fix the universal covers $p_G\colon \widetilde G\to G$ and $p_H\colon \widetilde H\to H$.
Let $\Gamma=\pi_1(G)$, make $\widetilde G$ and $\widetilde H$ into $\Gamma$-graphs as in \cref{subsec:univ-cover-graph}, and choose a $\Gamma$-equivairant graph homomorphism $\widetilde f\colon \widetilde G\to \widetilde H$ making the diagram \cref{eqn:f-tilde} commute. 

We define the monotone map $\Psi\colon \Hom(G,\Pi H)_f\to \Hom^\Gamma(\widetilde G,\widetilde H)$ appearing in the statement of \cref{thm:univ-cov-via-tilde} as follows.
Given any $\varphi\in \Hom(G,\Pi H)_f$, we define the function $\Psi(\varphi)\colon V(\widetilde G)\to \Pow\bigl(V(\widetilde H)\bigr)\setminus \{\emptyset\}$ by mapping each vertex $\widetilde v\in V(\widetilde G)$ to $\lift_{\widetilde f(\widetilde v)}\bigl(\varphi(p_G(\widetilde v))\bigr)= \{\,\lift_{\widetilde f(\widetilde v)}(\xi)\mid \xi\in \varphi(p_G(\widetilde v))\,\}$, which makes sense thanks to $s\varphi=f$ and \cref{eqn:f-tilde}. 
By \cref{prop:lift-proj-pres-adj} (1), we see that $\Psi(\varphi)$ is a set-valued homomorphism $\widetilde G\pto \widetilde H$. 
By \cref{prop:lift-proj-equivariance} and the $\Gamma$-equivariance of $\widetilde f$, we see that $\Psi(\varphi)$ is $\Gamma$-equivariant. 
Thus we obtain a monotone map $\Psi\colon \Hom(G,\Pi H)_f\to \Hom^\Gamma(\widetilde G,\widetilde H)$, which makes the diagram \cref{eqn:Psi-commutativity} commute by \cref{eqn:lift-proj-fibred}. 

Next we define a monotone map $\Phi\colon \Hom^\Gamma(\widetilde G,\widetilde H)\to \Hom(G,\Pi H)_f$, which will turn out to be the inverse of $\Psi$. 
Given any $\Gamma$-equivariant set-valued homomorphism $\psi\colon \widetilde G\pto \widetilde H$, the function  $\Phi(\psi)\colon V(G)\to \Pow\bigl(V(\Pi H)\bigr)\setminus\{\emptyset\}$ is defined as follows. Given $v\in V(G)$, we first choose $\widetilde v\in p_G^{-1}(v)\subseteq V(\widetilde G)$ and set $\Phi(\psi)(v)= \proj_{\widetilde f(\widetilde v)}\bigl(\psi(\widetilde v)\bigr)=\{\,\proj_{\widetilde f(\widetilde v)}(\widetilde x)\mid \widetilde x\in \psi(\widetilde v)\,\}$; that this definition does not depend on the choice of $\widetilde v$ follows from the $\Gamma$-equivariance of $\varphi$ and $\widetilde f$, \cref{prop:lift-proj-equivariance}, and the fact that $\Gamma$ acts transitively on the fiber $p_G^{-1}(v)$. 
It follows from \cref{prop:lift-proj-pres-adj} (2) that $\Phi(\psi)$ is a set-valued homomorphism $G\pto \Pi H$. 
We have $\Phi(\psi)\in\Hom(G,\Pi H)_f$ by \cref{eqn:f-tilde}.
Thus we obtain a monotone map $\Phi\colon \Hom^\Gamma(\widetilde G,\widetilde H)\to \Hom(G,\Pi H)_f$.

We have $\Phi\Psi=\id_{\Hom(G,\Pi H)_f}$ and $\Psi\Phi=\id_{\Hom^\Gamma(\widetilde G,\widetilde H)}$ since $\lift_{\widetilde f(\widetilde v)}$ and $\proj_{\widetilde f(\widetilde v)}$ are inverse to each other. This completes the proof of \cref{thm:univ-cov-via-tilde}. 

Notice that in the above construction, we have 
\begin{equation}\label{eqn:Psi-maps-idf-to-ftilde}
    \Psi(\id_f)=\widetilde f.
\end{equation}

%%%
\subsection{Galois theory of covers}\label{subsec:Galois}
It follows from \cref{thm:univ-cov-via-tilde} and \cref{eqn:Psi-maps-idf-to-ftilde} that the monotone map $p_f\colon E_f\to B_f$ we have constructed in \cref{subsec:def-of-pf} can be equivalently obtained by restricting the monotone map $\beta\colon \Hom^\Gamma(\widetilde G,\widetilde H)\to \Hom(G,H)$ to 
\[
\beta|_{\Conn(\Hom^\Gamma(\widetilde G,\widetilde H),\tilde f)}\colon \Conn\bigl(\Hom^\Gamma(\widetilde G,\widetilde H),\tilde f\bigr)\to \Conn\bigl(\Hom(G,H),f\bigr)=B_f.
\]
In particular, when the graphs $G$ and $H$ satisfy the assumption of \cref{thm:main}, this gives rise to another definition of the universal cover of $B_f$.

It turns out that we can generalize \cref{thm:univ-cov-via-tilde} by replacing the universal cover $p_G\colon \widetilde G\to G$ by a certain (normal) cover of $G$.
Indeed, in the above proof, the only facts about the covering map $p_G\colon \widetilde G\to G$ we have used are
\begin{itemize}
    \item the existence of a $\Gamma$-equivariant graph homomorphism $\widetilde f\colon \widetilde G\to \widetilde H$ making the diagram \cref{eqn:f-tilde} commute, and 
    \item the transitivity of the $\Gamma$-action on each fiber of $p_G$.
\end{itemize}

In order to state the generalization of \cref{thm:univ-cov-via-tilde}, we first recall more about the covering theory of graphs.

Let $G$ be a connected graph and $u\in V(G)$. 
For any subgroup $\Sigma\subseteq \pi_1(G,u)$, we have the equivalence relation $\sim_\Sigma$ on $V(\widetilde G)=V(\Pi G)_u$ defined by 
\[\xi\sim_\Sigma \eta \iff 
t(\xi)=t(\eta)\text{ and } \xi\cdot \eta^{-1}\in \Sigma.\] 
The equivalence class with respect to $\sim_\Sigma$ containing $\xi\in V(\widetilde G)$ is denoted by $[\xi]_\Sigma$.
Define the graph $\widetilde{G}_\Sigma$ by $V(\widetilde{G}_\Sigma)=V(\widetilde G)/_{\sim_\Sigma}$ and 
\[
E(\widetilde{G}_\Sigma)=\bigl\{\,\{[\xi]_\Sigma,[\eta]_\Sigma\}\;\big\vert\; \text{there exist $\xi'\in [\xi]_\Sigma$ and $\eta'\in [\eta]_\Sigma$ such that $\xi'\eta'\in E(\widetilde G)$}\,\bigr\}.
\]
The assignment $\xi\mapsto [\xi]_\Sigma$ defines a surjective graph homomorphism $q_\Sigma\colon \widetilde G\to \widetilde G_\Sigma$. 
We have a graph homomorphism $p_\Sigma\colon \widetilde{G}_\Sigma\to G$ defined by $p_\Sigma([\xi]_\Sigma)=t(\xi)$, i.e., so that the diagram 
\[
\begin{tikzpicture}[baseline=-\the\dimexpr\fontdimen22\textfont2\relax ]
      \node(0) at (0,0.5) {$\widetilde G$};
      \node(1) at (3,0.5) {$\widetilde G_\Sigma$};
      \node(2) at (1.5,-0.5) {$G$};
      \draw [->] (0) to node[auto,labelsize] {$q_\Sigma$} (1);
      \draw [<-] (2) to node[auto,labelsize] {$p_G$} (0);
      \draw [<-] (2) to node[auto,swap,labelsize] {$p_\Sigma$} (1);
\end{tikzpicture}
\]
commutes.
One can show that $p_\Sigma$ is a covering map of graphs. The covering map $p_\Sigma\colon \widetilde{G}_\Sigma\to G$ is said to \emph{correspond to} the subgroup $\Sigma\subseteq \pi_1(G,u)$.
Indeed, one can recover $\Sigma$ from $p_\Sigma\colon \widetilde{G}_\Sigma\to G$ as the image of the (injective) group homomorphism $(p_\Sigma)_\ast\colon \pi_1\bigl(\widetilde{G}_\Sigma,[(u)]_\Sigma\bigr)\to \pi_1(G,u)$.

Let $\Gamma=\pi_1(G,u)$.
If $\Sigma\subseteq \Gamma$ is a normal subgroup of $\Gamma$, the covering map $p_\Sigma\colon \widetilde G_\Sigma\to G$ corresponding to $\Sigma$ is called a \emph{normal} (or \emph{regular}) cover of $G$.
In this case, the natural $\Gamma$-action $\Gamma\times \widetilde G\to \widetilde G$ induces a well-defined $\Gamma$-action $\Gamma\times \widetilde G_\Sigma\to \widetilde G_\Sigma$ sending $(\gamma,[\xi]_\Sigma)\in \Gamma\times V(\widetilde G_\Sigma)$ to $[\gamma\cdot \xi]_\Sigma$; the surjective graph homomorphism $q_\Sigma\colon \widetilde G\to \widetilde G_\Sigma$ is then $\Gamma$-equivariant. 
The kernel of the group homomorphism $\Gamma\to \Aut(\widetilde G_\Sigma)$ corresponding to this $\Gamma$-action on $\widetilde G_\Sigma$ is $\Sigma$.

Now suppose that we are given connected graphs $G$ and $H$, a graph homomorphism $f\colon G\to H$, and a vertex $u\in V(G)$.
Let $\Gamma=\pi_1(G,u)$, and suppose that $\Sigma\subseteq \Gamma$ is a normal subgroup of $\Gamma$ contained in the kernel of the group homomorphism $f_\ast\colon \Gamma=\pi_1(G,u)\to \pi_1\bigl(H,f(u)\bigr)$.
Let $p_G\colon \widetilde G\to G$ be the universal cover of $G$, $p_\Sigma\colon \widetilde G_\Sigma\to G$ the normal cover of $G$ corresponding to $\Sigma$, 
and $p_H\colon \widetilde H\to H$ the universal cover of $H$. 
As explained above, each of $\widetilde G$, $\widetilde G_\Sigma$, and $\widetilde H$ admits a natural $\Gamma$-action. 

We have a monotone map $\Hom^\Gamma(q_\Sigma,\widetilde H)\colon \Hom^\Gamma(\widetilde G_\Sigma,\widetilde H)\to \Hom^\Gamma(\widetilde G,\widetilde H)$ defined by mapping each $\Gamma$-equivariant set-valued homomorphism $\theta\colon \widetilde G_\Sigma\pto \widetilde H$ to the composite 
\[
\begin{tikzpicture}[baseline=-\the\dimexpr\fontdimen22\textfont2\relax ]
      \node(0) at (0,0) {$\widetilde G$};
      \node(1) at (2,0) {$\widetilde G_\Sigma$};
      \node(2) at (4,0) {$\widetilde H$};
      \draw [->] (0) to node[auto,labelsize] {$q_\Sigma$} (1);
      \draw [pto] (1) to node[auto,labelsize] {$\theta$} (2);
\end{tikzpicture}
\]
defined by $(\theta q_\Sigma)(\xi)=\theta([\xi]_\Sigma)$ for each $\xi\in V(\widetilde G)$. We claim that $\Hom^\Gamma(q_\Sigma,\widetilde H)\colon \Hom^\Gamma(\widetilde G_\Sigma,\widetilde H)\to \Hom^\Gamma(\widetilde G,\widetilde H)$ is a poset isomorphism.
Indeed, for any $\Gamma$-equivariant set-valued homomorphism $\psi\colon \widetilde G\pto\widetilde H$, we can define a $\Gamma$-equivariant set-valued homomorphism $\overline \psi\colon \widetilde G_\Sigma\to \widetilde H$ by mapping each $[\xi]_\Sigma\in V(\widetilde G_\Sigma)$ to $\overline \psi([\xi]_\Sigma)=\psi (\xi)$, which is independent of the choice of the representative $\xi\in [\xi]_\Sigma$ by the $\Gamma$-equivariance of $\psi$ and the fact that the $\Sigma$-action on $\widetilde H$ is trivial.

Thus we have shown the following generalization of \cref{thm:univ-cov-via-tilde}.

\begin{corollary}\label{thm:univ-cov-Galois}
    Let $G$ and $H$ be connected graphs, $f\colon G\to H$ a graph homomorphism, and $u\in V(G)$ a vertex.
    Let $\Gamma=\pi_1(G,u)$, and 
    suppose that $\Sigma\subseteq \Gamma$ is a normal subgroup of $\Gamma$ contained in the kernel of the group homomorphism $f_\ast\colon \Gamma\to \pi_1\bigl(H,f(u)\bigr)$. Let $p_\Sigma\colon \widetilde G_\Sigma\to G$ be the normal cover of $G$ corresponding to $\Sigma$
    and $p_H\colon \widetilde H\to H$ be the universal cover of $H$, and make $\widetilde G_\Sigma$ and $\widetilde H$ into $\Gamma$-graphs as above. 
    Then there exists a poset isomorphism $\Psi_\Sigma\colon \Hom(G,\Pi H)_f\to \Hom^\Gamma(\widetilde G_\Sigma,\widetilde H)$ making the diagram 
    \begin{equation*}
\begin{tikzpicture}[baseline=-\the\dimexpr\fontdimen22\textfont2\relax ]
      \node(0) at (0,1) {$\Hom(G,\Pi H)_f$};
      \node(1) at (4,1) {$\Hom^\Gamma(\widetilde G_\Sigma,\widetilde H)$};
      \node(2) at (2,-1) {$\Hom(G,H)$};
      \draw [->] (0) to node[auto,labelsize] {$\Psi_\Sigma$} (1);
      \draw [<-] (2) to node[auto,labelsize] {$\Hom(G,t)|_{\Hom(G,\Pi H)_f}$} (0);
      \draw [<-] (2) to node[auto,swap,labelsize] {$\beta\Hom^\Gamma(q_\Sigma,\widetilde H)$} (1);
\end{tikzpicture}
    \end{equation*}
    commute. 
\end{corollary}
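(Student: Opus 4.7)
The plan is to deduce the corollary directly from Theorem \ref{thm:univ-cov-via-tilde} by showing that the monotone map
\[
\Hom^\Gamma(q_\Sigma,\widetilde H)\colon \Hom^\Gamma(\widetilde G_\Sigma,\widetilde H)\to \Hom^\Gamma(\widetilde G,\widetilde H)
\]
is a poset isomorphism, and then defining $\Psi_\Sigma$ as the composite of $\Psi$ from Theorem \ref{thm:univ-cov-via-tilde} with its inverse.

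First I would construct the inverse of $\Hom^\Gamma(q_\Sigma,\widetilde H)$. Given a $\Gamma$-equivariant set-valued homomorphism $\psi\colon \widetilde G\pto \widetilde H$, define $\overline\psi\colon \widetilde G_\Sigma\pto \widetilde H$ by $\overline\psi([\xi]_\Sigma)=\psi(\xi)$. The essential point is well-definedness. If $\xi\sim_\Sigma \eta$, then $\sigma:=\xi\cdot \eta^{-1}\in \Sigma\subseteq \Gamma$ satisfies $\xi = \sigma\cdot \eta$ under the $\Gamma$-action on $\widetilde G$, so by the $\Gamma$-equivariance of $\psi$ we obtain $\psi(\xi)=\sigma\cdot \psi(\eta)$. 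Now the $\Gamma$-action on $\widetilde H$ factors through $f_\ast\colon \Gamma\to \pi_1(H,f(u))$, and the hypothesis $\Sigma\subseteq \ker f_\ast$ forces $\sigma$ to act as the identity on $\widetilde H$, so $\psi(\xi)=\psi(\eta)$. Granting well-definedness, $\overline\psi$ is a $\Gamma$-equivariant set-valued homomorphism because edges of $\widetilde G_\Sigma$ lift, by definition of $E(\widetilde G_\Sigma)$, to edges of $\widetilde G$, and because the $\Gamma$-action on $\widetilde G_\Sigma$ is induced from that on $\widetilde G$ via the $\Gamma$-equivariant quotient $q_\Sigma$. The assignment $\psi\mapsto \overline\psi$ is clearly monotone, and manifestly inverse to $\Hom^\Gamma(q_\Sigma,\widetilde H)$ (on one side because every $[\xi]_\Sigma$ has $\xi$ as a representative; on the other because $q_\Sigma$ is surjective on vertices).

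Second, I would set $\Psi_\Sigma:=\Hom^\Gamma(q_\Sigma,\widetilde H)^{-1}\circ \Psi$, which is a poset isomorphism as a composite of two. Commutativity of the stated triangle follows immediately: using the commutativity from Theorem \ref{thm:univ-cov-via-tilde},
\[
\beta\circ \Hom^\Gamma(q_\Sigma,\widetilde H)\circ \Psi_\Sigma
= \beta\circ \Hom^\Gamma(q_\Sigma,\widetilde H)\circ \Hom^\Gamma(q_\Sigma,\widetilde H)^{-1}\circ \Psi
= \beta\circ \Psi
= \Hom(G,t)|_{\Hom(G,\Pi H)_f}.
\]

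The main obstacle is the well-definedness of $\overline\psi$, which is precisely the step that uses the hypothesis $\Sigma\subseteq \ker f_\ast$; everything else is formal bookkeeping layered on Theorem \ref{thm:univ-cov-via-tilde}.
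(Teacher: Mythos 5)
Your proof is correct and follows essentially the same route as the paper: the paper's argument preceding the corollary likewise constructs the inverse of $\Hom^\Gamma(q_\Sigma,\widetilde H)$ by the assignment $\psi\mapsto\overline\psi$, with well-definedness established via the $\Gamma$-equivariance of $\psi$ and the triviality of the $\Sigma$-action on $\widetilde H$ (which is precisely your observation that $\Sigma\subseteq\ker f_\ast$ forces $\sigma$ to act as the identity, since the $\Gamma$-action on $\widetilde H$ factors through $f_\ast$). Composing that inverse with the isomorphism $\Psi$ from \cref{thm:univ-cov-via-tilde} then yields $\Psi_\Sigma$ and the commutativity of the triangle, exactly as you do.
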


\begin{remark}\label{rmk:change-of-Gamma}
    Under the assumptions and notations of \cref{thm:univ-cov-Galois}, let $\Sigma'$ be a normal subgroup of $\Gamma$ satisfying $\Sigma'\subseteq \Sigma$, and let $\Gamma'$ be the quotient group $\Gamma/\Sigma'$. Then both $\widetilde G_\Sigma$ and $\widetilde H$ can be regarded as $\Gamma'$-graphs, and we can identify the poset $\Hom^\Gamma(\widetilde G_\Sigma,\widetilde H)$ with $\Hom^{\Gamma'}(\widetilde G_\Sigma,\widetilde H)$.
\end{remark}

\begin{example}\label{ex:cover-approach}
    Here are some examples of \cref{thm:univ-cov-Galois}.

    (a) Letting $\Sigma$ be the trivial subgroup of $\Gamma$ consisting only of the unit element, we recover \cref{thm:univ-cov-via-tilde}.

    (b) Another natural choice is to let $\Sigma$ be the kernel of $f_\ast\colon \Gamma\to \pi_1(H,x)$.

    (c) When $H$ is square-free, we can let $\Sigma$ be the normal subgroup of $\Gamma$ whose corresponding quotient group $\Gamma/\Sigma$ is the \emph{$2$-fundamental group} $\pi_1^2(G,u)$ in the sense of \cite[Section~3]{Mat17JMSUT}.
    Taking this choice (and replacing $\Gamma$ by $\Gamma'=\Gamma/\Sigma=\pi_1^2(G,u)$ as in \cref{rmk:change-of-Gamma}) gives rise to Matsushita's approach \cite{Mat25}. The normal cover $p_\Sigma\colon \widetilde G_\Sigma\to G$ is called the \emph{universal $2$-cover} of $G$ in \cite[Section~6]{Mat17JMSUT}.
\end{example}

\newcommand{\etalchar}[1]{$^{#1}$}

\end{document}